 \patchcmd{\@setaddresses}{\scshape\ignorespaces}{\ignorespaces}{}{} 
\appto\maketitle{%
\let\@makefnmark\relax  \let\@thefnmark\relax
\ifx\@empty\addresses\else\@footnotetext{%
  \vskip-\bigskipamount\@setaddresses}
  }
\def\enddoc@text{}
\shorttitle}
\@nx\MakeUppercase{\the\toks@}}
\patchcmd\@settitle{\uppercasenonmath\@title}{\Large}{}{}
\authors}
\newtheorem{theorem}{Theorem}[section]
\newtheorem{definition}{Definition}[section]
\newtheorem{corollary}{Corollary}[section]
\newtheorem{proposition}{Proposition}[section]
\newtheorem{lemma}{Lemma}[section]
\newtheorem{remark}{Remark}[section]
\numberwithin{equation}{section}
\begin{document}
\address{$^{[1]}$ University of Sfax, Sfax, Tunisia.}
\email{\url{kais.feki@hotmail.com}}
\subjclass[2010]{Primary 47A12, 46C05; Secondary 47B65, 47A05.}

\keywords{Positive operator, semi-inner product, numerical radius, $A$-adjoint operator, inequality.}

\date{\today}
\author[Kais Feki] {\Large{Kais Feki}$^{1}$}
\title[Some numerical radius inequalities for semi-Hilbert space operators]{Some numerical radius inequalities for semi-Hilbert space operators}

\maketitle

\begin{abstract}
Let $A$ be a positive bounded linear operator acting on a complex Hilbert space $\big(\mathcal{H}, \langle \cdot\mid \cdot\rangle \big)$. Let $\omega_A(T)$ and ${\|T\|}_A$ denote the $A$-numerical radius and the $A$-operator seminorm of an operator $T$ acting on the semi-Hilbertian space
$\big(\mathcal{H}, {\langle \cdot\mid \cdot\rangle}_A\big)$ respectively, where ${\langle x\mid y\rangle}_A := \langle Ax\mid y\rangle$ for all $x, y\in\mathcal{H}$. In this paper, we show that
  \begin{equation*}\label{m1}
  \tfrac{1}{4}\|T^{\sharp_A} T+TT^{\sharp_A}\|_A\le  \omega_A^2\left(T\right) \le \tfrac{1}{2}\|T^{\sharp_A} T+TT^{\sharp_A}\|_A.
  \end{equation*}
Here $T^{\sharp_A}$ is denoted to be a distinguished $A$-adjoint operator of $T$. Moreover, a considerable improvement of the above inequalities is proved. This allows to compute the $\mathbb{A}$-numerical radius of the operator matrix $\begin{pmatrix}
I&T \\
0&-I
\end{pmatrix}$ where $\mathbb{A}= \text{diag}(A,A)$. In addition, several $A$-numerical radius inequalities for semi-Hilbertian space operators are also established.
\end{abstract}

\section{Introduction and Preliminaries}\label{s1}
In this paper, let $\big(\mathcal{H}, \langle\cdot\mid \cdot\rangle\big)$ be a complex Hilbert space equipped with the norm $\|\cdot\|$. Let $\mathcal{B}(\mathcal{H})$ stand for the $C^*$-algebra of all bounded linear operators defined on $\mathcal{H}$ with the identity operator $I$. An operator $T\in\mathcal{B}\left( \mathcal{H}\right)$ is said to be positive if $\langle Tx\mid x\rangle \geq 0$ for all $x\in\mathcal{H}$. The square root of a positive operator $T$ is denoted by $T^{1/2}$. Let $|T|$ denotes the square root of $T^*T$, where $T^*$ is the adjoint of $T$. The operator norm and the spectral radius of an operator $T$ are denoted by $\left\| T \right\|$ and $r\left( T \right)$ respectively and they are given by
\begin{equation*}
\left\| T \right\|=\sup \left\{ \left\| Tx \right\|\,;\text{ }x\in \mathcal{H},\left\| x \right\|=1 \right\}.
\end{equation*}
and
\begin{equation*}
r(T)=\sup \left\{ |\lambda|\,; \lambda \in \sigma(T)\right \},
\end{equation*}
where $\sigma(T)$ denotes the spectrum  of $T$. It is well-known that, for every $T\in\mathcal{B}(\mathcal{H})$, we have
\begin{equation}\label{absolute}
\left\| T \right\|^2=\left\|\,| T |^2\,\right\|=\left\|\,| T^* |^2\,\right\|=r(| T |^2)=r(| T^* |^2).
\end{equation}

From now on, $A$ always stands for a positive linear operator in $\mathcal{B}(\mathcal{H})$. The cone of all positive operators of $\mathcal{B}(\mathcal{H})$ will be denoted by $\mathcal{B}(\mathcal{H})^+$. In this article, for a given $A\in\mathcal{B}(\mathcal{H})^+$, we are going to consider an additional semi-inner product ${\langle \cdot\mid \cdot\rangle}_A$ on $\mathcal{H}$ defined by
$${\langle x\mid y\rangle}_A = \langle Ax\mid y\rangle,\;\;\forall\,x, y \in\mathcal{H}.$$
This makes $\mathcal{H}$ into a semi-Hilbertian space. The seminorm induced by ${\langle \cdot\mid \cdot\rangle}_A$ is given by ${\|x\|}_A=\sqrt{{\langle x\mid x\rangle}_A}$ for every $x\in\mathcal{H}$. It can be observed that $(\mathcal{H}, {\|\cdot\|}_A)$ is a normed space if and only if $A$ is one-to-one and that the semi-Hilbertian space $(\mathcal{H}, {\|\cdot\|}_A)$ is complete if and only if the range of $A$ is closed in $\mathcal{H}$. In what follows, by an operator we mean a bounded linear operator in $\mathcal{B}(\mathcal{H})$. Also, the range of every operator $T$ is denoted by $\mathcal{R}(T)$, its null space by $\mathcal{N}(T)$. Given $\mathcal{M}$ a linear subspace of $\mathcal{H}$, $\overline{\mathcal{M}}$ denotes the closure with respect to the norm topology of $\mathcal{H}$. If $\mathcal{M}$ is a closed subspace of $\mathcal{H}$, then $P_{\mathcal{M}}$ stands for the orthogonal projection onto $\mathcal{M}$.

The semi-inner product ${\langle \cdot\mid \cdot\rangle}_{A}$ induces on the quotient $\mathcal{H}/\mathcal{N}(A)$ an inner product which is not complete unless $\mathcal{R}(A)$ is closed. However, a canonical construction due to de Branges and Rovnyak \cite{branrov} shows that the completion of $\mathcal{H}/\mathcal{N}(A)$ is isometrically isomorphic to the Hilbert space $\mathcal{R}(A^{1/2})$ equipped with the following inner product
\begin{align*}
\langle A^{1/2}x,A^{1/2}y\rangle_{\mathbf{R}(A^{1/2})}:=\langle P_{\overline{\mathcal{R}(A)}}x\mid P_{\overline{\mathcal{R}(A)}}y\rangle,\;\forall\, x,y \in \mathcal{H}.
\end{align*}
 For the sequel, the Hilbert space $\big(\mathcal{R}(A^{1/2}), \langle\cdot,\cdot\rangle_{\mathbf{R}(A^{1/2})}\big)$ will be denoted by $\mathbf{R}(A^{1/2})$.

\begin{definition} (\cite{acg1})
Let $T \in \mathcal{B}(\mathcal{H})$. An operator $S\in\mathcal{B}(\mathcal{H})$ is called an $A$-adjoint of $T$ if for every $x,y\in \mathcal{H}$, the identity
$\langle Tx\mid y\rangle_A=\langle x\mid Sy\rangle_A$ holds. That is $AS=T^*A$.
\end{definition}
Generally, the existence of an $A$-adjoint operator is not guaranteed. The set of all operators in $\mathcal{B}(\mathcal{H})$ admitting $A$-adjoints is denoted by $\mathcal{B}_{A}(\mathcal{H})$. By Douglas Theorem \cite{doug}, we  have
$$\mathcal{B}_{A}(\mathcal{H})=\left\{T\in \mathcal{B}(\mathcal{H})\,;\;\mathcal{R}(T^{*}A)\subseteq \mathcal{R}(A)\right\}.$$
Further, the set of all operators admitting $A^{1/2}$-adjoints is denoted by $\mathcal{B}_{A^{1/2}}(\mathcal{H})$. Again, by applying Douglas Theorem, we obtain
$$\mathcal{B}_{A^{1/2}}(\mathcal{H})=\left\{T \in \mathcal{B}(\mathcal{H})\,;\;\exists \,\lambda > 0\,;\;\|Tx\|_{A} \leq \lambda \|x\|_{A},\;\forall\,x\in \mathcal{H}  \right\}.$$
If $T\in \mathcal{B}_{A^{1/2}}(\mathcal{H})$, we will say that $T$ is $A$-bounded. Notice that $\mathcal{B}_{A}(\mathcal{H})$ and $\mathcal{B}_{A^{1/2}}(\mathcal{H})$ are two subalgebras of $\mathcal{B}(\mathcal{H})$ which are, in general, neither closed nor dense in $\mathcal{B}(\mathcal{H})$. Moreover, the following inclusions $\mathcal{B}_{A}(\mathcal{H})\subseteq \mathcal{B}_{A^{1/2}}(\mathcal{H})\subseteq \mathcal{B}(\mathcal{H})$ hold with equality if $A$ is injective and has closed range. For an account of results, we refer to \cite{acg1,acg2,bakfeki01,feki01} and the references therein. Clearly, $\langle\cdot\mid\cdot\rangle_{A}$ induces a seminorm on $\mathcal{B}_{A^{1/2}}(\mathcal{H})$. Indeed, if $T\in\mathcal{B}_{A^{1/2}}(\mathcal{H})$, then
\begin{equation*}\label{semii}
\|T\|_A:=\sup_{\substack{x\in \overline{\mathcal{R}(A)},\\ x\not=0}}\frac{\|Tx\|_A}{\|x\|_A}=\sup\big\{{\|Tx\|}_A\,; \,\,x\in \mathcal{H},\, {\|x\|}_A =1\big\}<\infty.
\end{equation*}
Notice that, if $T\in\mathcal{B}_{A^{1/2}}(\mathcal{H})$, then $\|T\|_A=0$ if and only if $AT=0$. Further, it was proved in \cite{fg} that for $T\in\mathcal{B}_{A^{1/2}}(\mathcal{H})$ we have
\begin{equation}\label{newsemi}
\|T\|_A=\sup\left\{|\langle Tx\mid y\rangle_A|\,;\;x,y\in \mathcal{H},\,\|x\|_{A}=\|y\|_{A}= 1\right\}.
\end{equation}
It should be emphasized that it may happen that ${\|T\|}_A = + \infty$ for some $T\in\mathcal{B}(\mathcal{H})\setminus \mathcal{B}_{A^{1/2}}(\mathcal{H})$ (see \cite[Example 2]{feki01}).

Before we move on, it should be mentioned that for $T\in \mathcal{B}_{A^{1/2}}(\mathcal{H})$ we have
\begin{equation}\label{semiiineq}
\|Tx\|_A\leq \|T\|_A\|x\|_A,\;\forall\,x\in \mathcal{H}.
\end{equation}
Also, we would like to emphasize that \eqref{semiiineq} fails to hold in general for some $T\in\mathcal{B}(\mathcal{H})$. In fact, one can take the operators
$A= \begin{pmatrix}0 & 0 \\ 0 & 1\end{pmatrix}$ and $T= \begin{pmatrix} 0 & 1 \\ 1 & 0 \end{pmatrix}$ on $\mathbb{C}^2$. If $x=(1,0)$, then $\|x\|_A=0$ and $\|Tx\|_A=1$. Thus, \eqref{semiiineq} fails to be true. Moreover, by applying \eqref{semiiineq} we show that
\begin{equation}\label{sousmultiplicative}
\|TS\|_A\leq \|T\|_A\|S\|_A,
\end{equation}
 for every $T,S\in \mathcal{B}_{A^{1/2}}(\mathcal{H})$.

If $T\in\mathcal{B}_{A}(\mathcal{H})$, then by Douglas theorem there exists a unique $A$-adjoint of $T$, denoted by $T^{\sharp_A}$, which satisfies $\mathcal{R}(T^{\sharp_A})\subseteq \overline{\mathcal{R}(A)}$. We observe that $T^{\sharp_A}=A^\dag T^*A,$ where $A^\dag$ is the Moore-Penrose inverse of $A$. For results concerning $T^{\sharp_A}$ and $A^\dag$ see \cite{acg1,acg2}. Notice that if $T\in\mathcal{B}_{A}(\mathcal{H})$, then $T^{\sharp_A}\in\mathcal{B}_{A}(\mathcal{H})$ and $(T^{\sharp_A})^{\sharp_A} = P_{\overline{\mathcal{R}(A)}}TP_{\overline{\mathcal{R}(A)}}$. For more facts related to this class of operators, we invite the reader to \cite{acg1,acg2} and their references. Now, we recall that an operator $T\in \mathcal{B}(\mathcal{H})$ is said to be $A$-isometry if $\|Tx\|_A=\|x\|_A$ for all $x\in \mathcal{H}$. Further, an operator $U\in \mathcal{B}_A(\mathcal{H})$ is called $A$-unitary if $U$ and $U^{\sharp_A}$ are $A$-isometries. By an $A$-normal operator, we mean an operator $T\in \mathcal{B}_A(\mathcal{H})$ which satisfies $T^{\sharp_A}T=TT^{\sharp_A}$. For more details related to these classes of operators, the reader can consult \cite{acg1,bakfeki04}. Any operator $T\in {\mathcal B}_A({\mathcal H})$ can be represented as $T=\Re_A(T)+i\Im_A(T)$, where
$$\Re_A(T):=\frac{T+T^{\sharp_A}}{2}\;\;\text{ and }\;\;\Im_A(T):=\frac{T-T^{\sharp_A}}{2i}.$$

Recently, the $A$-numerical range of an operator $T\in\mathcal{B}(\mathcal{H})$ is defined by Baklouti et al. in \cite{bakfeki01} as
$W_A(T) = \big\{{\langle Tx\mid x\rangle}_A\,; \,\, x\in \mathcal{H},\, {\|x\|}_A = 1\big\}$.
It was shown in \cite{bakfeki01} that $W_A(T)$ is a nonempty convex subset of $\mathbb{C}$ which is not necessarily closed even if $\text{dim}(\mathcal{H})<\infty$. Notice that supremum modulus of $W_A(T)$ is called
the $A$-numerical radius of $T$ (see \cite{bakfeki01}). More precisely, we have
$$\omega_A(T) = \sup\big\{|\lambda|\,; \,\, \lambda\in W_A(T)\big\}=\sup\left\{\big|{\langle Tx\mid x\rangle}_A\big|\,; \,\,\, x\in \mathcal{H},\, {\|x\|}_A = 1\right\}.$$
It should be mentioned that $W_A(T)= \mathbb{C}$ when $T\in\mathcal{B}(\mathcal{H})$ and satisfies $T(\mathcal{N}(A))\not\subseteq\mathcal{N}(A)$ (\cite[Theorem 2.1.]{bakfeki01}). So, $\omega_A(T) = + \infty$ for every operator $T\in\mathcal{B}(\mathcal{H})$ such that $T(\mathcal{N}(A))\not\subseteq\mathcal{N}(A)$. Notice that $\omega_A(\cdot)$ is a seminorm on $\mathcal{B}_{A^{1/2}}(\mathcal{H})$ with is equivalent to the $A$-operator seminorm. More precisely, it was shown in \cite{bakfeki01} that for every $T\in \mathcal{B}_{A^{1/2}}(\mathcal{H})$, we have
\begin{equation}\label{refine1}
\tfrac{1}{2} \|T\|_A\leq\omega_A(T) \leq \|T\|_A.
\end{equation}

By an $A$-selfadjoint, we mean an $T\in\mathcal{B}(\mathcal{H})$ which satisfies $AT$ is selfadjoint, that is, $AT = T^*A$. Further, for every $A$-selfadjoint operator $T$ we have
\begin{equation}\label{aself1}
\|T\|_{A}=\omega_A(T):=\sup\left\{|\langle Tx\mid x\rangle_{A}|\,;\;x\in \mathcal{H},\;\|x\|_{A}= 1\right\}.
\end{equation}
(see \cite{feki01}). In addition, an operator $T$ is said to be $A$-positive if $AT\geq0$ and we write $T\geq_{A}0$. One can verify that $T^{\sharp_A} T\geq_{A}0$ and $TT^{\sharp_A}\geq_{A}0$. Moreover, in view of \cite[Proposition 2.3.]{acg2}, we have
\begin{align}\label{diez}
{\|T^{\sharp_A} T\|}_A = {\|TT^{\sharp_A}\|}_A = {\|T\|}^2_A = {\|T^{\sharp_A}\|}^2_A.
\end{align}
For a given operator $T$, the $A$-Crawford number of $T$ is defined, as in \cite{zamani1}, by
\begin{align*}
c_A(T) = \inf \big\{|{\langle Tx\mid x\rangle}_A|\,;\,\, x\in\mathcal{H},\,{\|x\|}_A =1\big\}.
\end{align*}
Recently, the present author proved in \cite{feki01} some $A$-numerical radius inequalities for $A$-bounded operators. In particular, he showed that for every $T\in\mathcal{B}_{A^{1/2}}(\mathcal{H})$ and all positive integer $n$, we have
\begin{equation}\label{apower}
\omega_A(T^n)\leq [\omega_A(T)]^n.
\end{equation}
 Also, it has been shown in \cite{feki01} that for every $T\in\mathcal{B}_{A^{1/2}}(\mathcal{H})$, we have
\begin{equation}\label{kaisnew01}
\omega_A(T)\leq \frac{1}{2}\left(\|T\|_A+\|T^2\|_A^{1/2}\right).
\end{equation}
Clearly, \eqref{kaisnew01} is a refinement of the second inequality in \eqref{refine1}. For other facts and results related to the concept of $A$-numerical radius, the reader is referred to \cite{bakfeki01,bakfeki04,feki01,zamani1} and the references therein. In recent years, several results covering some classes of operators on a complex Hilbert space $\big(\mathcal{H}, \langle \cdot\mid \cdot\rangle\big)$ were extended to $\big(\mathcal{H}, {\langle \cdot\mid \cdot\rangle}_A\big)$. The reader is invited to see \cite{bakfeki01,bakfeki04,bhunfekipaul,feki03,zamani2,tamzhang,zamani1,zamani3} and the references therein. In this article, we will establish several results governing $\omega_A(\cdot)$ and $\|\cdot\|_A$. Some of these results will be a natural extensions of the well-known case $A=I$ due to Kittaneh et al. \cite{A.K.1, OK2,FK,ak2020}.

 \section{Results}\label{s2}
In this section, we present our results. In order to prove our first result, we need the following lemmas.
\begin{lemma}\label{lm1}(\cite[Theorem 2.6.]{zamani1})
Let $T\in\mathcal{B}_{A}(\mathcal{H})$. Then
\begin{align*}
\omega_A(T) = \displaystyle\sup\Big\{\left\|\alpha \Re_A(T) +
\beta \Im_A(T)\right\|_A\,;\;\alpha, \beta \in \mathbb{R}\; ,\,\alpha^2 + \beta^2 = 1\Big\}.
\end{align*}
\end{lemma}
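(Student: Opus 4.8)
The plan is to reduce the two-parameter real supremum to a one-parameter family of $A$-selfadjoint operators $\Re_A(\lambda T)$ with $|\lambda|=1$, and then to exploit the coincidence of the $A$-seminorm and the $A$-numerical radius for $A$-selfadjoint operators recorded in \eqref{aself1}.

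First I would rewrite the combination appearing in the statement. For $\alpha,\beta\in\mathbb{R}$ with $\alpha^2+\beta^2=1$, set $\lambda=\alpha-i\beta$, so that $|\lambda|=1$. Since the map $S\mapsto S^{\sharp_A}$ is conjugate-linear, one has $(\lambda T)^{\sharp_A}=\bar\lambda T^{\sharp_A}$, and a direct expansion of $\Re_A(T)$ and $\Im_A(T)$ gives
\[
\alpha\,\Re_A(T)+\beta\,\Im_A(T)=\frac{\lambda T+\bar\lambda T^{\sharp_A}}{2}=\Re_A(\lambda T).
\]
Hence the right-hand side of the claimed identity equals $\sup_{|\lambda|=1}\|\Re_A(\lambda T)\|_A$, and it suffices to evaluate this quantity.

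Next I would verify that each $\Re_A(\lambda T)$ is $A$-selfadjoint. Because $T\in\mathcal{B}_A(\mathcal{H})$, the defining relation $AT^{\sharp_A}=T^*A$ holds, and taking adjoints yields $(T^{\sharp_A})^*A=AT$. A short computation with these two identities then shows $A\,\Re_A(\lambda T)=\big(\Re_A(\lambda T)\big)^*A$, i.e. $\Re_A(\lambda T)$ is $A$-selfadjoint. Therefore \eqref{aself1} applies and gives $\|\Re_A(\lambda T)\|_A=\sup_{\|x\|_A=1}\big|\langle \Re_A(\lambda T)x\mid x\rangle_A\big|$. Using the adjoint relation in the form $\langle T^{\sharp_A}x\mid x\rangle_A=\overline{\langle Tx\mid x\rangle_A}$, the inner quantity simplifies to
\[
\langle \Re_A(\lambda T)x\mid x\rangle_A=\Re\big(\lambda\,\langle Tx\mid x\rangle_A\big).
\]

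Finally, I would take the supremum over $|\lambda|=1$, interchange the two suprema (this is valid for any function of $(\lambda,x)$), and use that for any fixed complex number $z$ one has $\sup_{|\lambda|=1}|\Re(\lambda z)|=|z|$, achieved by aligning the phase of $\lambda$. This produces $\sup_{|\lambda|=1}\|\Re_A(\lambda T)\|_A=\sup_{\|x\|_A=1}|\langle Tx\mid x\rangle_A|=\omega_A(T)$, which is the desired identity. The only genuinely delicate point is the $A$-selfadjointness verification, where one must work through $AT^{\sharp_A}=T^*A$ (valid precisely because $T\in\mathcal{B}_A(\mathcal{H})$) and its adjoint, rather than invoking $(T^{\sharp_A})^{\sharp_A}=T$, which fails in general and is only true up to the projection $P_{\overline{\mathcal{R}(A)}}$; everything else is routine bookkeeping.
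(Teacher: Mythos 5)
Your proof is correct, but note that the paper itself offers no proof of this statement: Lemma \ref{lm1} is quoted from \cite[Theorem 2.6.]{zamani1}, so there is no internal argument to compare yours with. Your route effectively reduces Lemma \ref{lm1} to Lemma \ref{lm2} (the other quoted result, \cite[Theorem 2.5.]{zamani1}) through the identity $\alpha\,\Re_A(T)+\beta\,\Im_A(T)=\tfrac{1}{2}\left(\lambda T+\bar{\lambda} T^{\sharp_A}\right)=\Re_A(\lambda T)$ with $\lambda=\alpha-i\beta$, and then proves this one-parameter version from scratch: the $A$-selfadjointness of $\Re_A(\lambda T)$ via $AT^{\sharp_A}=T^*A$ and its adjoint $(T^{\sharp_A})^*A=AT$, the equality \eqref{aself1}, the computation $\langle\Re_A(\lambda T)x\mid x\rangle_A=\Re\big(\lambda\langle Tx\mid x\rangle_A\big)$ (using $\langle T^{\sharp_A}x\mid x\rangle_A=\overline{\langle Tx\mid x\rangle_A}$), the always-legitimate interchange of the two suprema, and the phase alignment $\sup_{|\lambda|=1}|\Re(\lambda z)|=|z|$. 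Each step checks out: conjugate-homogeneity $(\lambda T)^{\sharp_A}=\bar{\lambda}T^{\sharp_A}$ follows from $T^{\sharp_A}=A^\dagger T^*A$, and you are right that one must not invoke $(T^{\sharp_A})^{\sharp_A}=T$, which holds only modulo $P_{\overline{\mathcal{R}(A)}}$. The net gain of your argument is that it makes this part of the paper self-contained: it establishes both quoted lemmas simultaneously from the elementary fact \eqref{aself1}, rather than importing them from the literature.
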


\begin{lemma}\label{lm2}(\cite[Theorem 2.5.]{zamani1})
Let $T\in\mathcal{B}_{A}(\mathcal{H})$. Then
\begin{align*}
\omega_A(T) = \displaystyle{\sup_{\theta \in \mathbb{R}}}{\left\|\Re_A(e^{i\theta}T)\right\|}_A=\displaystyle{\sup_{\theta \in \mathbb{R}}}{\left\|\Im_A(e^{i\theta}T)\right\|}_A
\end{align*}
\end{lemma}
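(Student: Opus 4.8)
The plan is to reduce everything to the identity \eqref{aself1}, which equates the $A$-operator seminorm with the $A$-numerical radius for $A$-selfadjoint operators. The first thing I would verify is that for any $S\in\mathcal{B}_A(\mathcal{H})$ the operator $\Re_A(S)=\tfrac{1}{2}(S+S^{\sharp_A})$ is $A$-selfadjoint. This follows from the two relations $AS^{\sharp_A}=S^*A$ (the defining property of the $A$-adjoint) and $(S^{\sharp_A})^*A=AS$ (obtained by taking Hilbert-space adjoints of the first relation and using $A^*=A$): together they give $A\,\Re_A(S)=\tfrac12(AS+S^*A)=(\Re_A(S))^*A$. Applying this with $S=e^{i\theta}T$, noting that $e^{i\theta}T\in\mathcal{B}_A(\mathcal{H})$ and that $(e^{i\theta}T)^{\sharp_A}=e^{-i\theta}T^{\sharp_A}$ by the conjugate-linearity of $\sharp_A$, shows that each $\Re_A(e^{i\theta}T)$ is $A$-selfadjoint, so \eqref{aself1} rewrites $\|\Re_A(e^{i\theta}T)\|_A$ as $\sup_{\|x\|_A=1}|\langle \Re_A(e^{i\theta}T)x\mid x\rangle_A|$.

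Second, I would compute this numerical-range quantity explicitly. Using $\langle T^{\sharp_A}x\mid x\rangle_A=\overline{\langle x\mid T^{\sharp_A}x\rangle_A}=\overline{\langle Tx\mid x\rangle_A}$, a direct expansion gives
$$\langle \Re_A(e^{i\theta}T)x\mid x\rangle_A=\tfrac12\big(e^{i\theta}\langle Tx\mid x\rangle_A+e^{-i\theta}\overline{\langle Tx\mid x\rangle_A}\big)=\Re\big(e^{i\theta}\langle Tx\mid x\rangle_A\big).$$
Hence $\sup_{\theta}\|\Re_A(e^{i\theta}T)\|_A=\sup_{\theta}\sup_{\|x\|_A=1}\big|\Re(e^{i\theta}\langle Tx\mid x\rangle_A)\big|$. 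Interchanging the two suprema (legitimate, as it is merely a supremum over the product index set) and invoking the elementary identity $\sup_{\theta\in\mathbb{R}}|\Re(e^{i\theta}z)|=|z|$ valid for every $z\in\mathbb{C}$, the inner supremum over $\theta$ collapses to $|\langle Tx\mid x\rangle_A|$; taking the outer supremum over $A$-unit vectors then yields exactly $\omega_A(T)$, establishing the first equality.

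For the second equality I would avoid repeating the computation by observing that $\Im_A(S)=\Re_A(e^{-i\pi/2}S)$, which is immediate from $\tfrac{1}{2i}=\tfrac{-i}{2}$ together with $(e^{-i\pi/2}S)^{\sharp_A}=e^{i\pi/2}S^{\sharp_A}$. Consequently $\Im_A(e^{i\theta}T)=\Re_A(e^{i(\theta-\pi/2)}T)$, and since $\theta\mapsto\theta-\pi/2$ is a bijection of $\mathbb{R}$, taking the supremum over $\theta$ gives $\sup_{\theta}\|\Im_A(e^{i\theta}T)\|_A=\sup_{\theta}\|\Re_A(e^{i\theta}T)\|_A=\omega_A(T)$.

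The only genuinely delicate point is the first step: one must be certain that $\Re_A(e^{i\theta}T)$ is $A$-selfadjoint so that \eqref{aself1} applies, and this rests on handling the $A$-adjoint identities correctly. In particular, the fact that $(S^{\sharp_A})^{\sharp_A}$ need not equal $S$ (it is only $P_{\overline{\mathcal{R}(A)}}SP_{\overline{\mathcal{R}(A)}}$) is irrelevant here, because $A$-selfadjointness is characterized directly through $AS=S^*A$ rather than through the equation $S=S^{\sharp_A}$. Everything after this is a one-line numerical computation and the scalar identity $\sup_{\theta}|\Re(e^{i\theta}z)|=|z|$.
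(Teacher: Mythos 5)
Your proof is correct. There is, however, nothing in the paper to compare it against: the paper does not prove this lemma but simply quotes it from \cite[Theorem 2.5]{zamani1}, so your argument serves as a self-contained replacement for the citation. It is also the standard route to this identity, and it dovetails with what the paper does elsewhere: the verification that $\Re_A(S)$ is $A$-selfadjoint via the two relations $AS^{\sharp_A}=S^*A$ and $(S^{\sharp_A})^*A=AS$ is exactly the computation carried out at the start of the paper's proof of Theorem \ref{main1}, and the remaining steps --- the identity $\langle \Re_A(e^{i\theta}T)x\mid x\rangle_A=\Re\big(e^{i\theta}\langle Tx\mid x\rangle_A\big)$, the appeal to \eqref{aself1}, the harmless interchange of the two suprema, and the scalar fact $\sup_{\theta\in\mathbb{R}}|\Re(e^{i\theta}z)|=|z|$ --- are all sound. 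Your reduction of the $\Im_A$ equality to the $\Re_A$ one through $\Im_A(S)=\Re_A(-iS)$ and the translation $\theta\mapsto\theta-\pi/2$ is a clean way to avoid repeating the computation. Two minor points you use implicitly and could state: $\Re_A(e^{i\theta}T)$ lies in $\mathcal{B}_A(\mathcal{H})$ (so \eqref{aself1} indeed applies to it), which holds because any $A$-selfadjoint operator $S$ satisfies $\mathcal{R}(S^*A)=\mathcal{R}(AS)\subseteq\mathcal{R}(A)$; and the conjugate-linearity $(e^{i\theta}T)^{\sharp_A}=e^{-i\theta}T^{\sharp_A}$, which you correctly invoke, follows directly from uniqueness of the distinguished $A$-adjoint with range in $\overline{\mathcal{R}(A)}$.
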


\begin{lemma}\label{lm3}(\cite[Lemma 1]{feki03})
Let $T\in \mathcal{B}(\mathcal{H})$ be an $A$-selfadjoint operator. Then, $T^{\sharp_A}$ is $A$-selfadjoint and
\begin{equation*}
({T^{\sharp_A}})^{\sharp_A}=T^{\sharp_A}.
\end{equation*}
\end{lemma}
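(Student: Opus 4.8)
The plan is to treat the two assertions in turn, deducing the operator identity from the $A$-selfadjointness of $T^{\sharp_A}$ together with the defining range condition of the distinguished $A$-adjoint. First I would record that the hypothesis $AT=T^*A$ forces $\mathcal{R}(T^*A)=\mathcal{R}(AT)\subseteq\mathcal{R}(A)$, so that $T\in\mathcal{B}_A(\mathcal{H})$ by Douglas' theorem and $T^{\sharp_A}$ is genuinely defined, with $\mathcal{R}(T^{\sharp_A})\subseteq\overline{\mathcal{R}(A)}$.

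For the $A$-selfadjointness of $T^{\sharp_A}$ I would argue directly at the level of the operator $AT^{\sharp_A}$. By the defining relation of the $A$-adjoint one has $AT^{\sharp_A}=T^*A$. Taking Hilbert-space adjoints and using $A^*=A$ gives $(AT^{\sharp_A})^*=AT$, while the hypothesis $AT=T^*A$ identifies this last operator with $AT^{\sharp_A}$ itself. Hence $AT^{\sharp_A}$ is selfadjoint, which is precisely the statement that $T^{\sharp_A}$ is $A$-selfadjoint.

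The identity $(T^{\sharp_A})^{\sharp_A}=T^{\sharp_A}$ would then follow from the first part. Since $T^{\sharp_A}$ is $A$-selfadjoint we have $AT^{\sharp_A}=(T^{\sharp_A})^*A$; on the other hand the distinguished $A$-adjoint of $T^{\sharp_A}$ satisfies, by definition, $A(T^{\sharp_A})^{\sharp_A}=(T^{\sharp_A})^*A$. Subtracting yields $A\big[(T^{\sharp_A})^{\sharp_A}-T^{\sharp_A}\big]=0$, so the range of the difference lies in $\mathcal{N}(A)$. At the same time, both $(T^{\sharp_A})^{\sharp_A}$ and $T^{\sharp_A}$ have ranges contained in $\overline{\mathcal{R}(A)}$ by the defining property of the $\sharp_A$-operation, and hence so does their difference. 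Because $A$ is selfadjoint one has $\mathcal{N}(A)=\overline{\mathcal{R}(A)}^{\perp}$, so the difference has range in $\overline{\mathcal{R}(A)}\cap\overline{\mathcal{R}(A)}^{\perp}=\{0\}$, forcing $(T^{\sharp_A})^{\sharp_A}=T^{\sharp_A}$.

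I do not anticipate a serious obstacle: the argument is essentially bookkeeping with the two relations $AT^{\sharp_A}=T^*A$ and $\mathcal{R}(T^{\sharp_A})\subseteq\overline{\mathcal{R}(A)}$. The one point requiring care is the interplay between the algebraic identity $A\big[(T^{\sharp_A})^{\sharp_A}-T^{\sharp_A}\big]=0$ and the range condition; it is exactly the orthogonal decomposition $\mathcal{H}=\overline{\mathcal{R}(A)}\oplus\mathcal{N}(A)$, valid since $A=A^*$, that lets one pass from ``$A$ annihilates the difference'' to ``the difference is zero''. An alternative route for the second identity would invoke the quoted formula $(T^{\sharp_A})^{\sharp_A}=P_{\overline{\mathcal{R}(A)}}TP_{\overline{\mathcal{R}(A)}}$ together with $T^{\sharp_A}=A^\dag T^*A=P_{\overline{\mathcal{R}(A)}}T$, reducing matters to the inclusion $T(\mathcal{N}(A))\subseteq\mathcal{N}(A)$, which is again immediate from $AT=T^*A$ since $x\in\mathcal{N}(A)$ gives $A(Tx)=T^*(Ax)=0$.
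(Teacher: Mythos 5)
There is no in-paper proof to compare against here: the paper imports this lemma verbatim, with citation, from \cite[Lemma 1]{feki03}, and never proves it. Judged on its own, your argument is correct and complete, and it uses only facts the paper records in its preliminaries. The Douglas-theorem description of $\mathcal{B}_A(\mathcal{H})$ together with $\mathcal{R}(T^*A)=\mathcal{R}(AT)\subseteq\mathcal{R}(A)$ legitimizes $T^{\sharp_A}$; taking Hilbert-space adjoints in $AT^{\sharp_A}=T^*A$ and substituting the hypothesis $AT=T^*A$ gives selfadjointness of $AT^{\sharp_A}$, i.e.\ the first assertion. The delicate point is the second assertion, and you handle it exactly right: the identity $A\bigl[(T^{\sharp_A})^{\sharp_A}-T^{\sharp_A}\bigr]=0$ alone does not force equality, since an $A$-adjoint is unique only up to perturbations mapping into $\mathcal{N}(A)$; it is the extra range condition (both $(T^{\sharp_A})^{\sharp_A}$ and $T^{\sharp_A}$ map into $\overline{\mathcal{R}(A)}$) combined with $\mathcal{N}(A)=\overline{\mathcal{R}(A)}^{\perp}$, valid because $A$ is selfadjoint, that annihilates the difference. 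Your alternative closing, via the paper's stated formulas $(T^{\sharp_A})^{\sharp_A}=P_{\overline{\mathcal{R}(A)}}TP_{\overline{\mathcal{R}(A)}}$ and $T^{\sharp_A}=A^\dag T^*A=P_{\overline{\mathcal{R}(A)}}T$ reduced to the invariance $T(\mathcal{N}(A))\subseteq\mathcal{N}(A)$, is equally valid and arguably shorter, since it leans on identities the paper already quotes. Either version would serve as a self-contained proof of the lemma within this paper's framework.
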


\begin{lemma}\label{lm4}(\cite[Lemma 6]{bhunfekipaul})
Let $\mathbb{A}=\begin{pmatrix}
A&0\\
0&A
\end{pmatrix}$ and $\mathbb{T}=\begin{pmatrix}
T_{11}&T_{12} \\
T_{21}&T_{22}
\end{pmatrix}$ be such that $T_{ij}\in \mathcal{B}_{A}(\mathcal{H})$ for all $i,j\in\{1,2\}$. Then, $\mathbb{T}\in \mathcal{B}_{\mathbb{A}}(\mathcal{H}\oplus \mathcal{H})$ and
$$ \mathbb{T}^{\sharp_\mathbb{A}}=\begin{pmatrix}
T^{\sharp_A}_{11} & T^{\sharp_A}_{21} \\
T^{\sharp_A}_{12} & T^{\sharp_A}_{22}
\end{pmatrix}.$$
\end{lemma}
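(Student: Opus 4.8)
The plan is to prove the two assertions in turn: that $\mathbb{T}$ admits an $\mathbb{A}$-adjoint, i.e. $\mathbb{T}\in\mathcal{B}_{\mathbb{A}}(\mathcal{H}\oplus\mathcal{H})$, and that the distinguished $\mathbb{A}$-adjoint $\mathbb{T}^{\sharp_{\mathbb{A}}}$ is exactly the displayed block matrix. I would rely on two facts recalled in the preliminaries: an operator $S$ is an $A$-adjoint of $T\in\mathcal{B}_A(\mathcal{H})$ precisely when $AS=T^*A$, and among all $A$-adjoints the distinguished one $T^{\sharp_A}$ is characterized by the range normalization $\mathcal{R}(T^{\sharp_A})\subseteq\overline{\mathcal{R}(A)}$. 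Accordingly, I would exhibit a concrete candidate $\mathbb{S}$ for $\mathbb{T}^{\sharp_{\mathbb{A}}}$, verify the operator identity $\mathbb{A}\mathbb{S}=\mathbb{T}^*\mathbb{A}$, and then verify the range condition that upgrades the conclusion from $\mathbb{S}$ being merely an $\mathbb{A}$-adjoint to being the distinguished one.

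First I would set
\[
\mathbb{S}=\begin{pmatrix} T_{11}^{\sharp_A} & T_{21}^{\sharp_A} \\ T_{12}^{\sharp_A} & T_{22}^{\sharp_A}\end{pmatrix},
\]
which is well defined since each $T_{ij}\in\mathcal{B}_A(\mathcal{H})$ possesses a distinguished $A$-adjoint. Because $\mathbb{A}$ is block-diagonal with both diagonal entries equal to $A$, a direct $2\times2$ block multiplication yields $\mathbb{A}\mathbb{S}=\big(A\,T_{ji}^{\sharp_A}\big)_{i,j}$, while the block form of $\mathbb{T}^*$ gives $\mathbb{T}^*\mathbb{A}=\big(T_{ji}^{*}A\big)_{i,j}$. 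Since each block satisfies the defining relation $A\,T_{ij}^{\sharp_A}=T_{ij}^*A$, these two matrices agree entry by entry, so $\mathbb{A}\mathbb{S}=\mathbb{T}^*\mathbb{A}$. This simultaneously shows that $\mathcal{R}(\mathbb{T}^*\mathbb{A})\subseteq\mathcal{R}(\mathbb{A})$, hence $\mathbb{T}\in\mathcal{B}_{\mathbb{A}}(\mathcal{H}\oplus\mathcal{H})$, and that $\mathbb{S}$ is an $\mathbb{A}$-adjoint of $\mathbb{T}$. One could instead argue entirely with semi-inner products, expanding $\langle\mathbb{T}u\mid v\rangle_{\mathbb{A}}$ for $u=(x_1,x_2)$ and $v=(y_1,y_2)$ into the four terms $\langle T_{ij}x_j\mid y_i\rangle_A$, replacing each by $\langle x_j\mid T_{ij}^{\sharp_A}y_i\rangle_A$, and regrouping to recognize $\langle u\mid \mathbb{S}v\rangle_{\mathbb{A}}$.

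It then remains to identify $\mathbb{S}$ with the distinguished adjoint $\mathbb{T}^{\sharp_{\mathbb{A}}}$. For this I would use $\overline{\mathcal{R}(\mathbb{A})}=\overline{\mathcal{R}(A)}\oplus\overline{\mathcal{R}(A)}$. Each entry $T_{ij}^{\sharp_A}$ maps into $\overline{\mathcal{R}(A)}$ by construction, and $\overline{\mathcal{R}(A)}$ is a closed subspace, so every component of $\mathbb{S}(y_1,y_2)$ lies in $\overline{\mathcal{R}(A)}$; thus $\mathcal{R}(\mathbb{S})\subseteq\overline{\mathcal{R}(A)}\oplus\overline{\mathcal{R}(A)}=\overline{\mathcal{R}(\mathbb{A})}$. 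By the uniqueness part of Douglas' theorem, the $\mathbb{A}$-adjoint whose range is contained in $\overline{\mathcal{R}(\mathbb{A})}$ is precisely $\mathbb{T}^{\sharp_{\mathbb{A}}}$, whence $\mathbb{S}=\mathbb{T}^{\sharp_{\mathbb{A}}}$.

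The algebra here is entirely routine; the only step deserving attention is the last one. Without the range condition one would have shown merely that $\mathbb{S}$ is \emph{an} $\mathbb{A}$-adjoint, and $\mathbb{A}$-adjoints are determined only up to operators whose range lies in $\mathcal{N}(\mathbb{A})$. The genuine identification $\mathbb{S}=\mathbb{T}^{\sharp_{\mathbb{A}}}$ therefore hinges on the fact that each block $T_{ij}^{\sharp_A}$ already carries the sharp normalization $\mathcal{R}(T_{ij}^{\sharp_A})\subseteq\overline{\mathcal{R}(A)}$, which is exactly what makes the assembled matrix satisfy the analogous normalization for $\mathbb{A}$.
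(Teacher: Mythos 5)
Your proof is correct. Note that the paper does not prove this lemma at all: it is quoted verbatim from \cite[Lemma 6]{bhunfekipaul}, so there is no in-paper argument to compare against. Your reasoning is exactly the natural self-contained one, and it uses only facts already recorded in the paper's preliminaries: the defining relation $AT^{\sharp_A}=T^*A$, the Douglas-theorem characterization of $\mathcal{B}_{\mathbb{A}}(\mathcal{H}\oplus\mathcal{H})$, and the uniqueness of the $\mathbb{A}$-adjoint subject to the normalization $\mathcal{R}(\mathbb{T}^{\sharp_{\mathbb{A}}})\subseteq\overline{\mathcal{R}(\mathbb{A})}$. You are also right to flag the last step as the one carrying the real content: without checking $\mathcal{R}(\mathbb{S})\subseteq\overline{\mathcal{R}(A)}\oplus\overline{\mathcal{R}(A)}=\overline{\mathcal{R}(\mathbb{A})}$, the identity $\mathbb{A}\mathbb{S}=\mathbb{T}^*\mathbb{A}$ only exhibits $\mathbb{S}$ as \emph{an} $\mathbb{A}$-adjoint, determined up to perturbations with range in $\mathcal{N}(\mathbb{A})$. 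For comparison, the cited source reaches the same conclusion slightly more mechanically, via the explicit formula $\mathbb{T}^{\sharp_{\mathbb{A}}}=\mathbb{A}^{\dag}\mathbb{T}^{*}\mathbb{A}$ together with $\mathbb{A}^{\dag}=\begin{pmatrix} A^{\dag} & 0\\ 0 & A^{\dag}\end{pmatrix}$, so that block multiplication produces the entries $A^{\dag}T_{ji}^{*}A=T_{ji}^{\sharp_A}$ directly; that route buys a one-line computation at the cost of invoking properties of the Moore--Penrose inverse, whereas yours makes the uniqueness mechanism explicit. Both are sound.
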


Now, we are in a position to prove our first result in this paper.
\begin{theorem}\label{main1}
Let $T\in \mathcal{B}_{A}(\mathcal{H})$. Then,
  \begin{equation}\label{feki1}
  \tfrac{1}{4}\|T^{\sharp_A} T+TT^{\sharp_A}\|_A\le  \omega_A^2(T) \le \tfrac{1}{2}\|T^{\sharp_A} T+TT^{\sharp_A}\|_A.
  \end{equation}
Moreover, the inequalities in \eqref{feki1} are sharp.
\end{theorem}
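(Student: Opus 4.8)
The plan is to reduce both inequalities to the single identity
\[
\|T^{\sharp_A}T + TT^{\sharp_A}\|_A = \sup_{\|x\|_A = 1}\left(\|Tx\|_A^2 + \|T^{\sharp_A}x\|_A^2\right),
\]
and then estimate the right-hand quantity from above and below. To obtain this identity, I first note that $T^{\sharp_A}T \geq_A 0$ and $TT^{\sharp_A}\geq_A 0$, so their sum is $A$-positive and in particular $A$-selfadjoint; hence \eqref{aself1} applies and writes its $A$-seminorm as $\sup_{\|x\|_A=1}|\langle (T^{\sharp_A}T + TT^{\sharp_A})x \mid x\rangle_A|$. Using the defining property of the $A$-adjoint in the form $\langle Tx\mid x\rangle_A = \langle x\mid T^{\sharp_A}x\rangle_A$, one computes $\langle T^{\sharp_A}Tx\mid x\rangle_A = \|Tx\|_A^2$ and $\langle TT^{\sharp_A}x\mid x\rangle_A = \|T^{\sharp_A}x\|_A^2$, both nonnegative, which yields the identity above.

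For the upper bound, I fix $x$ with $\|x\|_A = 1$ and apply the Cauchy--Schwarz inequality for the semi-inner product twice: from $\langle Tx\mid x\rangle_A$ I get $|\langle Tx\mid x\rangle_A| \leq \|Tx\|_A$, and from $\langle Tx\mid x\rangle_A = \langle x\mid T^{\sharp_A}x\rangle_A$ I get $|\langle Tx\mid x\rangle_A|\leq \|T^{\sharp_A}x\|_A$. Multiplying these and using the AM--GM inequality gives $|\langle Tx\mid x\rangle_A|^2 \leq \|Tx\|_A\|T^{\sharp_A}x\|_A \leq \tfrac12(\|Tx\|_A^2 + \|T^{\sharp_A}x\|_A^2)$; taking the supremum over unit vectors and invoking the identity yields $\omega_A^2(T) \leq \tfrac12\|T^{\sharp_A}T+TT^{\sharp_A}\|_A$.

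For the lower bound --- the more delicate part --- I use the Cartesian decomposition $T = \Re_A(T) + i\Im_A(T)$. Expanding $\|(T\pm T^{\sharp_A})x\|_A^2$ and adding, the cross terms cancel and I obtain the parallelogram-type identity $\|Tx\|_A^2 + \|T^{\sharp_A}x\|_A^2 = 2\big(\|\Re_A(T)x\|_A^2 + \|\Im_A(T)x\|_A^2\big)$. Since $A(T+T^{\sharp_A}) = AT + (AT)^*$ and $A(T-T^{\sharp_A}) = AT - (AT)^*$, both $\Re_A(T)$ and $\Im_A(T)$ are $A$-selfadjoint; moreover $\langle \Re_A(T)x\mid x\rangle_A = \Re\langle Tx\mid x\rangle_A$ and $\langle\Im_A(T)x\mid x\rangle_A = \Im\langle Tx\mid x\rangle_A$, so \eqref{aself1} gives $\|\Re_A(T)\|_A \leq \omega_A(T)$ and $\|\Im_A(T)\|_A \leq \omega_A(T)$ (this is also immediate from \Cref{lm2}). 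Bounding $\|\Re_A(T)x\|_A \leq \|\Re_A(T)\|_A$ and likewise for the imaginary part via \eqref{semiiineq}, then taking the supremum, produces
\[
\|T^{\sharp_A}T + TT^{\sharp_A}\|_A \leq 2\big(\|\Re_A(T)\|_A^2 + \|\Im_A(T)\|_A^2\big) \leq 4\,\omega_A^2(T),
\]
which is the claimed lower bound. Sharpness follows by specializing to $A = I$, where the two estimates reduce to the classical inequalities of Kittaneh: the upper bound is attained for any ($A$-)selfadjoint $T$, and the lower bound for the nilpotent $T = \begin{pmatrix} 0&1\\0&0\end{pmatrix}$. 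The main technical care throughout is that, because $A$ may be non-injective, $T^{\sharp_A}$ is only an $A$-adjoint with $(T^{\sharp_A})^{\sharp_A} = P_{\overline{\mathcal{R}(A)}}TP_{\overline{\mathcal{R}(A)}}$, which differs from $T$ in general; hence every identity above must be verified directly from $\langle Tx\mid y\rangle_A = \langle x\mid T^{\sharp_A}y\rangle_A$ rather than by treating $T^{\sharp_A}$ as a genuine Hilbert-space adjoint.
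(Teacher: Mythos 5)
Your proof is correct, and it takes a genuinely different route from the paper's. You reduce both bounds to the scalar identity $\langle (T^{\sharp_A}T+TT^{\sharp_A})x\mid x\rangle_A=\|Tx\|_A^2+\|T^{\sharp_A}x\|_A^2$, legitimized by $A$-positivity of the sum and \eqref{aself1}; the upper bound then follows from two applications of Cauchy--Schwarz plus AM--GM, and the lower bound from the parallelogram identity $\|Tx\|_A^2+\|T^{\sharp_A}x\|_A^2=2\big(\|\Re_A(T)x\|_A^2+\|\Im_A(T)x\|_A^2\big)$ together with $\max\{\|\Re_A(T)\|_A,\|\Im_A(T)\|_A\}\le\omega_A(T)$, which is Lemma \ref{lm2} at $\theta=0$. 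The paper argues at the operator level instead: for the upper bound it proves $([\Re_A(T)]^{\sharp_A})^2+([\Im_A(T)]^{\sharp_A})^2=\tfrac{1}{2}(TT^{\sharp_A}+T^{\sharp_A}T)^{\sharp_A}$, embeds $\alpha[\Re_A(T)]^{\sharp_A}+\beta[\Im_A(T)]^{\sharp_A}$ as a row of a $2\times 2$ operator matrix on $(\mathcal{H}\oplus\mathcal{H},\mathbb{A})$ whose $\mathbb{A}$-norm is computed via \eqref{diez} and Lemma \ref{lm4}, and then invokes Lemma \ref{lm1} through $\omega_A(T)=\omega_A(T^{\sharp_A})$; for the lower bound it uses the operator identity $[\Re_A(e^{i\theta}T)]^2+[\Im_A(e^{i\theta}T)]^2=\tfrac{1}{2}(TT^{\sharp_A}+T^{\sharp_A}T)$ with the triangle inequality and submultiplicativity \eqref{sousmultiplicative}. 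Your route is more elementary and self-contained: it needs no operator matrices, and it avoids entirely the $(T^{\sharp_A})^{\sharp_A}\neq T$ bookkeeping that you correctly identify as the main pitfall. What the paper's heavier machinery buys is reuse: the same off-diagonal matrix technique powers the next theorem (the bounds via $\Psi_A$ and $\Phi_A$) and hence Corollary \ref{corr2020}, which strictly refines the present theorem. Finally, your sharpness argument is acceptable: exhibiting equality at $A=I$ (selfadjoint $T$ for the constant $\tfrac{1}{2}$, the nilpotent Jordan block for $\tfrac{1}{4}$) shows the constants cannot be improved, and the paper does essentially the same, taking $A$-normal $T$ (arbitrary $A$) for $\tfrac{1}{2}$ and $A=\alpha I$ with the same Jordan block for $\tfrac{1}{4}$.
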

\begin{proof}
By using the property $AT^{\sharp_A}=T^*A$ we see that
$$A[\Re_A(T)]=\tfrac{1}{2}AT+\tfrac{1}{2}AT^{\sharp_A}=\tfrac{1}{2}(T^{\sharp_A})^*A+\tfrac{1}{2}T^*A=[\Re_A(T)]^*A.$$
Hence, $\Re_A(T)$ is an $A$-selfadjoint operator. Similarly, we prove that $\Im_A(T)$ is $A$-selfadjoint. So, by Lemma \ref{lm3} we have
\begin{equation}
({[\Re_A(T)]^{\sharp_A}})^{\sharp_A}=[\Re_A(T)]^{\sharp_A}\;\text{ and }\; ({[\Im_A(T)]^{\sharp_A}})^{\sharp_A}=[\Im_A(T)]^{\sharp_A}.
\end{equation}

Now, it can be observed that
\begin{align}\label{newway00}
\left([\Re_A(T)]^{\sharp_A}\right)^2 + \left([\Im_A(T)]^{\sharp_A}\right)^2
&=\frac{1}{2}\left[(T^{\sharp_A})^{\sharp_A} T^{\sharp_A}+ T^{\sharp_A}(T^{\sharp_A})^{\sharp_A}\right]\nonumber\\
&= \frac{1}{2}\left(TT^{\sharp_A} + T^{\sharp_A} T\right)^{\sharp_A}.
\end{align}
Let $\mathbb{A}=\begin{pmatrix}
A &0\\
0 &A
\end{pmatrix}$ and $x\in\mathcal{H}$ be such that $\|x\|_A=1$. Let also $\alpha,\beta \in \mathbb{R}$ be such that $\alpha ^{2}+\beta
^{2}=1$. It can seen that
\begin{align*}
&\left\Vert\left(\alpha [\Re_A(T)]^{\sharp_A}+\beta [\Im_A(T)]^{\sharp_A}\right)x\right\Vert_A^2\\
& =\left\Vert
\begin{pmatrix}
[\Re_A(T)]^{\sharp_A} & [\Im_A(T)]^{\sharp_A} \\
0 & 0%
\end{pmatrix}
\begin{pmatrix}
\alpha x \\
\beta x%
\end{pmatrix}
\right\Vert_A^2 \\
&\leq
\left\Vert
\begin{pmatrix}
[\Re_A(T)]^{\sharp_A} & [\Im_A(T)]^{\sharp_A} \\
0 & 0%
\end{pmatrix}
\right\Vert_{\mathbb{A}}^2 \quad (\text{by }\,\eqref{semiiineq})\\
&=
\left\Vert
\begin{pmatrix}
[\Re_A(T)]^{\sharp_A} & [\Im_A(T)]^{\sharp_A} \\
0 & 0%
\end{pmatrix}
\begin{pmatrix}
[\Re_A(T)]^{\sharp_A} & [\Im_A(T)]^{\sharp_A} \\
0 & 0%
\end{pmatrix}^{\sharp_\mathbb{A}}
\right\Vert_{\mathbb{A}}\quad (\text{by }\,\eqref{diez})\\
&=
\left\Vert
\begin{pmatrix}
[\Re_A(T)]^{\sharp_A} & [\Im_A(T)]^{\sharp_A} \\
0 & 0%
\end{pmatrix}
\begin{pmatrix}
[\Re_A(T)]^{\sharp_A} & 0 \\
[\Im_A(T)]^{\sharp_A} & 0%
\end{pmatrix}
\right\Vert_{\mathbb{A}}\\
& =\left\Vert ([\Re_A(T)]^{\sharp_A})^{2}+([\Im_A(T)]^{\sharp_A})^{2}\right\Vert_A\\
&=\frac{1}{2}\left\Vert \left(TT^{\sharp_A} + T^{\sharp_A} T\right)^{\sharp_A}\right\Vert_A\quad (\text{by }\,\eqref{newway00})\\
&=\frac{1}{2}\left\Vert TT^{\sharp_A} + T^{\sharp_A} T\right\Vert_A\quad (\text{by }\,\eqref{diez}).
\end{align*}%
Therefore, we obtain
\begin{align}\label{qq4}
\omega_A^{2}(T)
& = \omega_A^{2}(T^{\sharp_A})\nonumber\\
 &=\sup_{\alpha ^{2}+\beta ^{2}=1}\left\Vert\alpha [\Re_A(T)]^{\sharp_A}+\beta [\Im_A(T)]^{\sharp_A}\right\Vert_A^2\nonumber\quad(\text{by Lemma } \ref{lm2})\\
 &\leq \frac{1}{2}\left\Vert TT^{\sharp_A} + T^{\sharp_A} T\right\Vert_A.
\end{align}

 Let $\theta \in \mathbb{R}$. By making simple computations we see that
\begin{align}\label{newway}
\left[\Re_A(e^{i\theta}T)\right]^2 + \left[\Im_A(e^{i\theta}T)\right]^2
= \frac{1}{2}\left(TT^{\sharp_A} + T^{\sharp_A} T\right).
\end{align}
This implies, that
\begin{align*}
\frac{1}{2}\left\|\left(TT^{\sharp_A} + T^{\sharp_A} T\right)\right\|_A
& =\left\|\left[\Re_A(e^{i\theta}T)\right]^2 + \left[\Im_A(e^{i\theta}T)\right]^2\right\|_A \\
& \leq\left\|\left[\Re_A(e^{i\theta}T)\right]^2 \right\|_A+ \left\|\left[\Im_A(e^{i\theta}T)\right]^2\right\|_A \\
 &\leq 2\omega_A^2(T).\;\;(\text{by }\eqref{sousmultiplicative}\text{ and }\text{Lemma } \ref{lm2})
\end{align*}
Hence, we get
\begin{equation}\label{n1}
\frac{1}{4}\left\|\left(TT^{\sharp_A} + T^{\sharp_A} T\right)\right\|_A\leq \omega_A^2(T).
\end{equation}
Combining \eqref{qq4} together with \eqref{n1} yields to \eqref{feki1} as desired. Now, to see that the constant $\frac{1}{2}$ is sharp we consider an arbitrary $A$-normal operator $T$. By \cite[Theorem 4]{feki01} we have $\|T\|_A=\omega_A(T)$. So, we get
$$\tfrac{1}{2}\|T^{\sharp_A} T+TT^{\sharp_A}\|_A=\|TT^{\sharp_A}\|_A=\|T\|_A^2=\omega_A^2(T).$$
The sharpness of the first inequality in \eqref{feki1} can be verified by considering the operators $A=\begin{pmatrix}\alpha&0\\0&\alpha\end{pmatrix}$ for some $\alpha\neq0$ and $T=\begin{pmatrix}0&1\\0&0\end{pmatrix}$
\end{proof}
\begin{remark}
\noindent (1) If $A=I$, we obtain the well-known inequalities proved by F. Kittaneh in \cite[Theorem 1]{FK}. Also, if $A$ is an injective positive operator, we get the recent result proved by Bhunia et al. proved in \cite[Corollary 2.7]{BPN}.
\par \vskip 0.1 cm \noindent (2)\;The second inequality in Theorem \ref{main1} has recently been proved by Zamani in \cite[Theorem 2.10]{zamani1} using a completely different argument. Moreover, it is not difficult to see that
\begin{align*}
\frac{1}{2}{\|T\|}_A\leq\frac{1}{2}\sqrt{{\big\|T T^{\sharp_A} + T^{\sharp_A} T\big\|}_A}\leq\omega_A(T) \leq \frac{\sqrt{2}}{2}\sqrt{{\big\|T T^{\sharp_A} + T^{\sharp_A} T\big\|}_A} \leq {\|T\|}_A.
\end{align*}
So, the inequalities in Theorem \ref{main1} improve the inequalities in \eqref{refine1}.
\end{remark}

In order to prove our next result in this paper, we need the following lemma.

\begin{lemma}\label{jdidddd}(\cite{feki04})
Let $T,S\in \mathcal{B}_{A}(\mathcal{H})$ and $\mathbb{A}=\begin{pmatrix}
A &0\\
0 &A
\end{pmatrix}$. Then,
\begin{equation*}
\omega_{\mathbb{A}}\left[\begin{pmatrix}
0&T\\
S &0
\end{pmatrix}\right]=\frac{1}{2}\sup_{\theta\in \mathbb{R}}\left\|e^{i\theta}T+e^{-i\theta}S^{\sharp_A}\right\|_A.
\end{equation*}
In particular,
\begin{equation}\label{ppp}
\omega_{\mathbb{A}}\left[\begin{pmatrix}
0&T\\
T &0
\end{pmatrix}\right]=\omega_A(T).
\end{equation}
\end{lemma}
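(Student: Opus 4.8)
The plan is to apply Lemma~\ref{lm2} to the block operator $\mathbb{T}=\begin{pmatrix}0&T\\ S&0\end{pmatrix}$ regarded on the semi-Hilbertian space $(\mathcal{H}\oplus\mathcal{H},\langle\cdot\mid\cdot\rangle_{\mathbb{A}})$. First I would record, by Lemma~\ref{lm4}, that $\mathbb{T}\in\mathcal{B}_{\mathbb{A}}(\mathcal{H}\oplus\mathcal{H})$ with $\mathbb{T}^{\sharp_{\mathbb{A}}}=\begin{pmatrix}0&S^{\sharp_A}\\ T^{\sharp_A}&0\end{pmatrix}$, so that for each $\theta\in\mathbb{R}$,
$$\Re_{\mathbb{A}}(e^{i\theta}\mathbb{T})=\tfrac12\begin{pmatrix}0&X_\theta\\ Y_\theta&0\end{pmatrix},\qquad X_\theta=e^{i\theta}T+e^{-i\theta}S^{\sharp_A},\; Y_\theta=e^{i\theta}S+e^{-i\theta}T^{\sharp_A}.$$
By Lemma~\ref{lm2}, $\omega_{\mathbb{A}}(\mathbb{T})=\sup_{\theta}\|\Re_{\mathbb{A}}(e^{i\theta}\mathbb{T})\|_{\mathbb{A}}$, so everything reduces to evaluating the $\mathbb{A}$-seminorm of an off-diagonal block.

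Next I would establish the elementary identity $\left\|\begin{pmatrix}0&X\\ Y&0\end{pmatrix}\right\|_{\mathbb{A}}=\max(\|X\|_A,\|Y\|_A)$ for arbitrary $X,Y\in\mathcal{B}_A(\mathcal{H})$. This follows by writing, for $(u,v)$ with $\|u\|_A^2+\|v\|_A^2=1$, $\left\|\begin{pmatrix}0&X\\ Y&0\end{pmatrix}\begin{pmatrix}u\\ v\end{pmatrix}\right\|_{\mathbb{A}}^2=\|Xv\|_A^2+\|Yu\|_A^2$, bounding each summand by \eqref{semiiineq}, and noting that the resulting linear functional in the variables $(\|v\|_A^2,\|u\|_A^2)$ attains its maximum at a vertex of the segment $\|u\|_A^2+\|v\|_A^2=1$.

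The crux is then to show $\|X_\theta\|_A=\|Y_\theta\|_A$ for every $\theta$, so that the $\max$ collapses to $\|X_\theta\|_A$. Here I would use \eqref{diez} to write $\|Y_\theta\|_A=\|Y_\theta^{\sharp_A}\|_A=\|e^{-i\theta}S^{\sharp_A}+e^{i\theta}(T^{\sharp_A})^{\sharp_A}\|_A$ and then exploit the projection $P:=P_{\overline{\mathcal{R}(A)}}$. Since $A=AP=PA$, one has $\|R\|_A=\|PR\|_A=\|RP\|_A$ for every $R\in\mathcal{B}_A(\mathcal{H})$; combined with $(T^{\sharp_A})^{\sharp_A}=PTP$ and $\mathcal{R}(S^{\sharp_A})\subseteq\overline{\mathcal{R}(A)}$ (so $PS^{\sharp_A}=S^{\sharp_A}$), these let me replace $(T^{\sharp_A})^{\sharp_A}$ by $T$ without changing the seminorm, yielding $\|Y_\theta\|_A=\|e^{i\theta}T+e^{-i\theta}S^{\sharp_A}\|_A=\|X_\theta\|_A$. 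This bookkeeping with $P$ is the main obstacle: because $(T^{\sharp_A})^{\sharp_A}=PTP\ne T$ in general, the two off-diagonal entries are genuinely different operators whose $A$-seminorms merely coincide, so the argument must stay at the level of seminorms and use that the defining supremum ranges over $x\in\overline{\mathcal{R}(A)}$, on which $P$ acts trivially.

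Combining the three steps gives $\omega_{\mathbb{A}}(\mathbb{T})=\tfrac12\sup_{\theta}\|e^{i\theta}T+e^{-i\theta}S^{\sharp_A}\|_A$, the asserted formula. Finally, for \eqref{ppp} I would set $S=T$ and observe that $\tfrac12\bigl(e^{i\theta}T+e^{-i\theta}T^{\sharp_A}\bigr)=\Re_A(e^{i\theta}T)$, whence $\tfrac12\sup_{\theta}\|e^{i\theta}T+e^{-i\theta}T^{\sharp_A}\|_A=\sup_{\theta}\|\Re_A(e^{i\theta}T)\|_A=\omega_A(T)$ by a second application of Lemma~\ref{lm2}.
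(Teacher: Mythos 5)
The paper never proves this lemma at all: it is imported verbatim from \cite{feki04}, so there is no internal argument to compare against, and your proposal in effect fills a gap by giving a self-contained derivation from the paper's own toolkit (Lemma~\ref{lm2}, Lemma~\ref{lm4}, \eqref{diez}, \eqref{semiiineq}) — very much in the style of the surrounding proofs. The argument is correct. Lemma~\ref{lm4} indeed gives $\mathbb{T}^{\sharp_\mathbb{A}}=\begin{pmatrix}0&S^{\sharp_A}\\ T^{\sharp_A}&0\end{pmatrix}$, Lemma~\ref{lm2} applied on $(\mathcal{H}\oplus\mathcal{H},\mathbb{A})$ reduces everything to $\sup_\theta\left\|\Re_{\mathbb{A}}(e^{i\theta}\mathbb{T})\right\|_{\mathbb{A}}$ with your off-diagonal entries $X_\theta,Y_\theta$, and the block identity $\left\|\begin{pmatrix}0&X\\ Y&0\end{pmatrix}\right\|_{\mathbb{A}}=\max\{\|X\|_A,\|Y\|_A\}$ holds. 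Your crux step is also sound: since $AP=PA=A$ one has $\|PR\|_A=\|RP\|_A=\|R\|_A$, and since $PS^{\sharp_A}=S^{\sharp_A}$ and $S^{\sharp_A}(I-P)=0$ (the latter is immediate from $S^{\sharp_A}=A^\dagger S^*A$ and $A(I-P)=0$), one gets
\[
Y_\theta^{\sharp_A}=e^{-i\theta}S^{\sharp_A}+e^{i\theta}(T^{\sharp_A})^{\sharp_A}
=P\left(e^{i\theta}T+e^{-i\theta}S^{\sharp_A}\right)P=PX_\theta P,
\]
so $\|Y_\theta\|_A=\|Y_\theta^{\sharp_A}\|_A=\|PX_\theta P\|_A=\|X_\theta\|_A$ by \eqref{diez}, and the specialization $S=T$ yields \eqref{ppp} exactly as you say. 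Two small points to tighten, neither a real gap: first, your convexity argument for the block identity only proves the inequality $\le\max$; the reverse inequality (needed for the lower half of the final equality) should be recorded, and it follows trivially by testing on vectors of the form $(u,0)$ and $(0,v)$. Second, the replacement of $(T^{\sharp_A})^{\sharp_A}=PTP$ by $T$ tacitly uses $S^{\sharp_A}P=S^{\sharp_A}$ (equivalently, that $P$ can be inserted on \emph{both} sides of $X_\theta$ without changing anything), which you do not state; it is immediate, but it is the one fact beyond $PS^{\sharp_A}=S^{\sharp_A}$ that the bookkeeping genuinely requires.
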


Now, we are in a position to prove our second result in this paper.
 \begin{theorem}
Let $\mathbb{A}=\begin{pmatrix}A&0\\0&A\end{pmatrix}$ and $T,S\in \mathcal{B}_{A}(\mathcal{H})$. Then,
\begin{equation}\label{r62}
\omega_{\mathbb{A}}\left[\begin{pmatrix}
0 &T\\
S &0
\end{pmatrix}\right]\leq \min\big\{\Psi_A(T,S),\Psi_A(S,T)\big\}\leq\frac{\|T+S\|_A}{2},
\end{equation}
and
\begin{equation}\label{r61}
\omega_{\mathbb{A}}\left[\begin{pmatrix}
0 &T\\
S &0
\end{pmatrix}\right]\geq\max\big\{\Phi_A(T,S),\Phi_A(S,T)\big\},
\end{equation}
where
$$\Psi_A(T,S)=\frac{1}{2}\sqrt{{\big\|TT^{\sharp_A} + S^{\sharp_A}S\big\|}_A+ 2\omega_A(TS)},$$
and
$$\Phi_A(T,S)=\frac{1}{2}\sqrt{{\big\|TT^{\sharp_A} + S^{\sharp_A}S\big\|}_A + 2c_A(TS)}.$$
\end{theorem}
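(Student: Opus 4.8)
The plan is to run everything through Lemma~\ref{jdidddd}. Write $\mathbb{T}=\left(\begin{smallmatrix}0&T\\S&0\end{smallmatrix}\right)$ and, for $\theta\in\mathbb{R}$, set $R_\theta:=e^{i\theta}T+e^{-i\theta}S^{\sharp_A}$; then Lemma~\ref{jdidddd} gives $\omega_{\mathbb{A}}(\mathbb{T})=\tfrac12\sup_{\theta\in\mathbb{R}}\|R_\theta\|_A$, so the whole statement reduces to two-sided control of $\|R_\theta\|_A^2$. The device I would exploit is that, by \eqref{diez}, $\|R_\theta\|_A=\|R_\theta^{\sharp_A}\|_A$, whence $\|R_\theta\|_A^2=\sup_{\|x\|_A=1}\|R_\theta^{\sharp_A}x\|_A^2=\sup_{\|x\|_A=1}\|R_\theta x\|_A^2$. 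Computing the seminorm through $R_\theta^{\sharp_A}$ will produce the member $\Psi_A(T,S)$ (respectively $\Phi_A(T,S)$), while computing it through $R_\theta$ produces $\Psi_A(S,T)$ (respectively $\Phi_A(S,T)$); thus the $\min$ and the $\max$ emerge directly, with no separate symmetrization step.

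Concretely, I would expand $\|R_\theta^{\sharp_A}x\|_A^2$ for a unit $A$-norm vector $x$, where $R_\theta^{\sharp_A}=e^{-i\theta}T^{\sharp_A}+e^{i\theta}(S^{\sharp_A})^{\sharp_A}$. The one delicate point is that $(S^{\sharp_A})^{\sharp_A}$ is \emph{not} $S$ as an operator; however, straight from the defining relation of the $A$-adjoint one checks $\langle (S^{\sharp_A})^{\sharp_A}x\mid y\rangle_A=\langle Sx\mid y\rangle_A$ for all $x,y$, so the two agree inside every $A$-inner product. Combining this with the identities $\|T^{\sharp_A}x\|_A^2=\langle TT^{\sharp_A}x\mid x\rangle_A$, $\|(S^{\sharp_A})^{\sharp_A}x\|_A^2=\|Sx\|_A^2=\langle S^{\sharp_A}Sx\mid x\rangle_A$, and $(TS)^{\sharp_A}=S^{\sharp_A}T^{\sharp_A}$, the cross term collapses to $\langle TSx\mid x\rangle_A$ and I expect to obtain
\[
\|R_\theta^{\sharp_A}x\|_A^2=\big\langle (TT^{\sharp_A}+S^{\sharp_A}S)x\mid x\big\rangle_A+2\,\mathrm{Re}\!\big(e^{2i\theta}\langle TSx\mid x\rangle_A\big).
\]
Expanding $\|R_\theta x\|_A^2$ instead gives the companion identity with $T^{\sharp_A}T+SS^{\sharp_A}$ on the diagonal and cross term $\langle STx\mid x\rangle_A$, which is exactly the data of $\Psi_A(S,T)$ and $\Phi_A(S,T)$.

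From the boxed identity the estimates are routine. For the upper bound, $TT^{\sharp_A}+S^{\sharp_A}S\geq_A 0$ is $A$-selfadjoint, so \eqref{aself1} yields $\sup_{\|x\|_A=1}\langle(TT^{\sharp_A}+S^{\sharp_A}S)x\mid x\rangle_A=\|TT^{\sharp_A}+S^{\sharp_A}S\|_A$, while the cross term is $\le 2\omega_A(TS)$ uniformly in $x$ and $\theta$; taking $\sup_x$ then $\sup_\theta$ gives $\|R_\theta\|_A^2\le\|TT^{\sharp_A}+S^{\sharp_A}S\|_A+2\omega_A(TS)$, i.e. $\omega_{\mathbb{A}}(\mathbb{T})\le\Psi_A(T,S)$, and the companion expansion gives $\le\Psi_A(S,T)$, proving the first inequality in \eqref{r62}. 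For \eqref{r61} I would fix a unit vector $x$, choose $\theta$ so that $e^{2i\theta}\langle TSx\mid x\rangle_A=|\langle TSx\mid x\rangle_A|\ge c_A(TS)$, read off $\sup_\theta\|R_\theta\|_A^2\ge\langle(TT^{\sharp_A}+S^{\sharp_A}S)x\mid x\rangle_A+2c_A(TS)$, and take $\sup_x$ to get $\omega_{\mathbb{A}}(\mathbb{T})\ge\Phi_A(T,S)$ (and $\ge\Phi_A(S,T)$ from the companion expansion).

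Finally, the rightmost estimate is elementary: by subadditivity of $\|\cdot\|_A$ and \eqref{diez}, $\|TT^{\sharp_A}+S^{\sharp_A}S\|_A\le\|T\|_A^2+\|S\|_A^2$, while \eqref{refine1} and \eqref{sousmultiplicative} give $2\omega_A(TS)\le2\|TS\|_A\le2\|T\|_A\|S\|_A$; adding and completing the square bounds $4\Psi_A(T,S)^2$ by $(\|T\|_A+\|S\|_A)^2$, so $\Psi_A(T,S)\le\tfrac12(\|T\|_A+\|S\|_A)$ and likewise for $\Psi_A(S,T)$. I expect the printed right-hand side $\tfrac12\|T+S\|_A$ to be a slip for $\tfrac12(\|T\|_A+\|S\|_A)$: already for $A=I$, $T=\left(\begin{smallmatrix}0&1\\0&0\end{smallmatrix}\right)$, $S=T^{*}$ one gets $\min\{\Psi_A(T,S),\Psi_A(S,T)\}=1>\tfrac12=\tfrac12\|T+S\|$, whereas $\tfrac12(\|T\|_A+\|S\|_A)=1$ is tight. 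The step I anticipate as the main obstacle is the bookkeeping in the second paragraph — recognizing that one must pass through $R_\theta^{\sharp_A}$ (not $R_\theta$) to manufacture the precise combination $TT^{\sharp_A}+S^{\sharp_A}S$ rather than $T^{\sharp_A}T+SS^{\sharp_A}$, and correctly carrying $(S^{\sharp_A})^{\sharp_A}$ through the $A$-inner product; once the two identities are in hand, everything else is bounding a real part by $\omega_A$ from above and by $c_A$ from below.
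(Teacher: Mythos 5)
Your proof is correct, and it follows the same backbone as the paper's: both reduce the problem via Lemma~\ref{jdidddd} to estimating $\sup_{\theta}\big\|e^{i\theta}T+e^{-i\theta}S^{\sharp_A}\big\|_A$, expand the square into the $A$-positive part $TT^{\sharp_A}+S^{\sharp_A}S$ plus a rotating cross term, and bound that cross term above by $\omega_A(TS)$ and below by $c_A(TS)$. There are, however, two genuine differences in execution. First, the paper works at the operator level: it applies the lemma to $\mathbb{T}^{\sharp_\mathbb{A}}$, writes $\|X\|_A^2=\|XX^{\sharp_A}\|_A$ by \eqref{diez}, and then needs Lemma~\ref{lm2} (to identify $\sup_\theta\|\Re_A(e^{2i\theta}S^{\sharp_A}T^{\sharp_A})\|_A$ with $\omega_A(S^{\sharp_A}T^{\sharp_A})$) together with the identities $\|(TT^{\sharp_A}+S^{\sharp_A}S)^{\sharp_A}\|_A=\|TT^{\sharp_A}+S^{\sharp_A}S\|_A$ and $\omega_A(S^{\sharp_A}T^{\sharp_A})=\omega_A(TS)$; your vector-level expansion of $\|R_\theta^{\sharp_A}x\|_A^2$, justified by the relation $A(S^{\sharp_A})^{\sharp_A}=AS$ which you verify correctly, reaches the same quantities using only \eqref{aself1}. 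Second, the paper obtains the $\min$ (and the $\max$) by a separate symmetrization step, citing $\omega_{\mathbb{A}}\left[\begin{pmatrix}0&T\\S&0\end{pmatrix}\right]=\omega_{\mathbb{A}}\left[\begin{pmatrix}0&S\\T&0\end{pmatrix}\right]$ from \cite{feki04}, whereas you extract both $\Psi_A(T,S)$ and $\Psi_A(S,T)$ (and likewise both $\Phi$'s) from the two expansions $\|R_\theta^{\sharp_A}x\|_A^2$ and $\|R_\theta x\|_A^2$ of one and the same seminorm --- a small but real economy. Finally, your diagnosis of the rightmost term in \eqref{r62} is correct: the paper's own computation ends with the bound $\tfrac{1}{2}\left(\|T\|_A+\|S\|_A\right)$, so the printed $\tfrac{1}{2}\|T+S\|_A$ is a misprint, and your example ($A=I$, $T$ the $2\times 2$ nilpotent Jordan block, $S=T^*$) shows the printed version is genuinely false, since there $\min\big\{\Psi_A(T,S),\Psi_A(S,T)\big\}=1$ while $\tfrac{1}{2}\|T+S\|=\tfrac{1}{2}$.
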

\begin{proof}
By Lemma \ref{jdidddd}, we see that
\begin{align*}
\omega_{\mathbb{A}}\left[\begin{pmatrix}
0&T\\
S &0
\end{pmatrix}\right]
&=\omega_{\mathbb{A}}\left[\begin{pmatrix}
0&T\\
S &0
\end{pmatrix}^{\sharp_\mathbb{A}}\right]\\
&=\frac{1}{2}\sup_{\theta\in \mathbb{R}}\left\|e^{i\theta}S^{\sharp_A}+e^{-i\theta}(T^{\sharp_A})^{\sharp_A}\right\|_A\\
& = \frac{1}{2}\displaystyle{\sup_{\theta \in \mathbb{R}}}
{\left\|\Big(e^{i\theta}S^{\sharp_A}+e^{-i\theta}(T^{\sharp_A})^{\sharp_A}\Big)
\Big(e^{i\theta}S^{\sharp_A}+e^{-i\theta}(T^{\sharp_A})^{\sharp_A}\Big)^{\sharp_A}\right\|}_A^{\frac{1}{2}}\,(\text{by }\eqref{diez})\\
& = \frac{1}{2}\displaystyle{\sup_{\theta \in \mathbb{R}}}
{\left\|\Big(e^{i\theta}S^{\sharp_A}+e^{-i\theta}(T^{\sharp_A})^{\sharp_A}\Big)
\Big(e^{-i\theta}(S^{\sharp_A})^{\sharp_A}+e^{i\theta}T^{\sharp_A}\Big)\right\|}_A^{\frac{1}{2}}\\
& = \frac{1}{2}\displaystyle{\sup_{\theta \in \mathbb{R}}}
{\left\|S^{\sharp_A} (S^{\sharp_A})^{\sharp_A} + (T^{\sharp_A})^{\sharp_A} T^{\sharp_A}
+ 2\Re_A\big(e^{2i\theta}S^{\sharp_A}T^{\sharp_A}\big)\right\|}_A^{\frac{1}{2}}\\
& \leq \frac{1}{2}\sqrt{\left\|S^{\sharp_A} (S^{\sharp_A})^{\sharp_A} + (T^{\sharp_A})^{\sharp_A} T^{\sharp_A}\right\|_A+2\displaystyle{\sup_{\theta \in \mathbb{R}}}\left\|\Re_A\big(e^{2i\theta}S^{\sharp_A}T^{\sharp_A}\big)\right\|_A }\\
&= \frac{1}{2}\sqrt{\left\|\big(TT^{\sharp_A}+S^{\sharp_A}S\big)^{\sharp_A}\right\|_A+2\omega_A\big(S^{\sharp_A}T^{\sharp_A}\big) }\quad(\text{by Lemma } \ref{lm2})\\
&= \frac{1}{2}\sqrt{\left\|TT^{\sharp_A}+S^{\sharp_A}S\right\|_A+2\omega_A\big(TS\big) }=\Psi_A(T,S),
\end{align*}
where the last equality holds since ${\|X\|}_A = {\|X^{\sharp_A}\|}_A$ and $\omega_A(X)=\omega_A(X^{\sharp_A})$ for every $X\in\mathcal{B}_{A}(\mathcal{H})$. Hence, $\omega_{\mathbb{A}}\left[\begin{pmatrix}
0 &T\\
S &0
\end{pmatrix}\right]\leq \Psi_A(T,S)$. Moreover, by \cite{feki04} we have $\omega_{\mathbb{A}}\left[\begin{pmatrix}
0 &T\\
S &0
\end{pmatrix}\right]=\omega_{\mathbb{A}}\left[\begin{pmatrix}
0 &S\\
T&0
\end{pmatrix}\right]$. This implies that $\omega_{\mathbb{A}}\left[\begin{pmatrix}
0 &T\\
S &0
\end{pmatrix}\right]\leq \Psi_A(S,T)$. Thus, $\omega_{\mathbb{A}}\left[\begin{pmatrix}
0 &T\\
S &0
\end{pmatrix}\right]\leq \min\big\{\Psi_A(T,S),\Psi_A(S,T)\big\}$ as desired. In addition, we have
\begin{align*}
\min\big\{\Psi_A(T,S),\Psi_A(S,T)\big\}
&\leq \Psi_A(T,S)\\
 &=\frac{1}{2}\sqrt{{\big\|TT^{\sharp_A} + S^{\sharp_A}S\big\|}_A+ 2\omega_A(TS)}\\
  &\leq\frac{1}{2}\sqrt{\|TT^{\sharp_A}\|_A + \|S^{\sharp_A}S\|_A+ 2\|TS\|_A}\quad (\text{by }\,\eqref{refine1})\\
    &\leq\frac{1}{2}\sqrt{\|T\|_A^2+\|S\|_A^2+ 2\|T\|_A\|S\|_A}\;\, (\text{by }\eqref{diez}\text{ and }\eqref{sousmultiplicative})\\
        &=\frac{1}{2}\sqrt{\left(\|T\|_A+\|S\|_A\right)^2}=\frac{\|T\|_A+\|S\|_A}{2}.
\end{align*}
Hence, \eqref{r62} is proved. On the other hand, let $x\in \mathcal{H}$ be such ${\|x\|}_A = 1$ and let $\beta$ be a real number which satisfies ${\langle S^{\sharp_A} T^{\sharp_A} x\mid x\rangle}_A=e^{-2i\beta}\big|{\langle S^{\sharp_A} T^{\sharp_A} x\mid x\rangle}_A\big|$. So, we have
\begin{align}\label{maliz1}
\omega_{\mathbb{A}}\left[\begin{pmatrix}
0&T\\
S &0
\end{pmatrix}\right]
&=\omega_{\mathbb{A}}\left[\begin{pmatrix}
0&T\\
S &0
\end{pmatrix}^{\sharp_\mathbb{A}}\right]\nonumber\\
&\geq\frac{1}{2}\left\|e^{i\beta}S^{\sharp_A}+e^{-i\beta}(T^{\sharp_A})^{\sharp_A}\right\|_A\nonumber\\
& = {\left\|\Big(e^{i\beta}S^{\sharp_A}+e^{-i\beta}(T^{\sharp_A})^{\sharp_A}\Big)
\Big(e^{-i\beta}(S^{\sharp_A})^{\sharp_A}+e^{i\beta}T^{\sharp_A}\Big)\right\|}_A^{\frac{1}{2}}\quad (\text{by }\,\eqref{diez})\nonumber\\
& = \frac{1}{2}{\left\|S^{\sharp_A} (S^{\sharp_A})^{\sharp_A} + (T^{\sharp_A})^{\sharp_A} T^{\sharp_A}
+ 2\Re_A\big(e^{2i\beta}S^{\sharp_A}T^{\sharp_A}\big)\right\|}_A^{\frac{1}{2}}.
\end{align}
It can be verified that $S^{\sharp_A} (S^{\sharp_A})^{\sharp_A} + (T^{\sharp_A})^{\sharp_A} T^{\sharp_A}
+ 2\Re_A\big(e^{2i\beta}S^{\sharp_A}T^{\sharp_A}\big)$ is $A$-selfadjoint, then by \eqref{aself1} we have
\begin{align*}
&{\left\|S^{\sharp_A} (S^{\sharp_A})^{\sharp_A} + (T^{\sharp_A})^{\sharp_A} T^{\sharp_A}
+ 2\Re_A\big(e^{2i\beta}S^{\sharp_A}T^{\sharp_A}\big)\right\|}_A\\
&\geq \left|\langle\Big(S^{\sharp_A} (S^{\sharp_A})^{\sharp_A} + (T^{\sharp_A})^{\sharp_A} T^{\sharp_A}
+ 2\Re_A\big(e^{2i\beta}S^{\sharp_A}T^{\sharp_A}\big)x \Big)\mid x\rangle_A\right| \\
 &=\left|\langle\big(S^{\sharp_A} (S^{\sharp_A})^{\sharp_A} + (T^{\sharp_A})^{\sharp_A} T^{\sharp_A}\big)x\mid x\rangle_A
+ 2\langle\Re_A\big(e^{2i\beta}S^{\sharp_A}T^{\sharp_A}\big)x \mid x\rangle_A\right| \\
&=\left|\langle\big(S^{\sharp_A} (S^{\sharp_A})^{\sharp_A} + (T^{\sharp_A})^{\sharp_A} T^{\sharp_A}\big)x\mid x\rangle_A
+ 2\Re\Big(e^{2i\beta}\langle S^{\sharp_A}T^{\sharp_A}x \mid x\rangle_A\Big)\right| \\
&=\Big|\langle\big(S^{\sharp_A} (S^{\sharp_A})^{\sharp_A} + (T^{\sharp_A})^{\sharp_A} T^{\sharp_A}\big)x\mid x\rangle_A
+ 2\big|{\langle S^{\sharp_A} T^{\sharp_A} x\mid x\rangle}_A\big|\Big|\\
&=\langle\big(S^{\sharp_A} (S^{\sharp_A})^{\sharp_A} + (T^{\sharp_A})^{\sharp_A} T^{\sharp_A}\big)x\mid x\rangle_A
+ 2\big|{\langle S^{\sharp_A} T^{\sharp_A} x\mid x\rangle}_A\big|,
\end{align*}
where the last equality follows since $S^{\sharp_A} (S^{\sharp_A})^{\sharp_A} + (T^{\sharp_A})^{\sharp_A} T^{\sharp_A}\geq_A0$. So, by taking into account \eqref{maliz1} and the fact that $c_A(X)=c_A(X^{\sharp_A})$ for all $X\in \mathcal{B}_A(\mathcal{H})$, we obtain
\begin{align*}
\omega_{\mathbb{A}}\left[\begin{pmatrix}
0&T\\
S &0
\end{pmatrix}\right]
\geq\frac{1}{2}\sqrt{\langle\big(S^{\sharp_A} (S^{\sharp_A})^{\sharp_A} + (T^{\sharp_A})^{\sharp_A} T^{\sharp_A}\big)x\mid x\rangle_A
+ 2c_A(TS)}.
\end{align*}
So, by taking the supremum over all $x\in \mathcal{H}$ with $\|x\|_A=1$ in the above inequality and then using \eqref{aself1} we get
\begin{align*}
\omega_{\mathbb{A}}\left[\begin{pmatrix}
0&T\\
S &0
\end{pmatrix}\right]
\geq\frac{1}{2}\sqrt{\left\|\big(S^{\sharp_A} (S^{\sharp_A})^{\sharp_A} + (T^{\sharp_A})^{\sharp_A} T^{\sharp_A}\big)\right\|_A
+ 2c_A(TS)}=\Phi(T,S),
\end{align*}
where the last equality follows since $\|X\|_A=\|X^{\sharp_A}\|_A$ for all $X\in \mathcal{B}_A(\mathcal{H})$. Now, by using an argument similar to that used in the proof of \eqref{r62}, we get \eqref{r61} as desired. This finishes the proof of the theorem.
\end{proof}
By letting $T=S$ in the above theorem and then using \eqref{ppp}, we reach the following corollary which generalizes \cite[Theorem 2.4.]{A.K.1} and considerably improves the inequalities in Theorem \ref{main1}.
\begin{corollary}\label{corr2020}
Let $T\in\mathcal{B}_{A}(\mathcal{H})$. Then
\begin{align*}
\frac{1}{2}\sqrt{{\big\|TT^{\sharp_A} + T^{\sharp_A} T\big\|}_A + 2c_A(T^2)}\leq\omega_A(T) \leq \frac{1}{2}\sqrt{{\big\|TT^{\sharp_A} + T^{\sharp_A} T\big\|}_A + 2\omega_A(T^2)}.
\end{align*}
\end{corollary}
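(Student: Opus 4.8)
The plan is to read off this corollary as the special case $S=T$ of the theorem just established, together with the identity \eqref{ppp} of Lemma \ref{jdidddd}. First I would substitute $S=T$ into the operator matrix $\begin{pmatrix} 0 & T \\ S & 0 \end{pmatrix}$, turning it into $\begin{pmatrix} 0 & T \\ T & 0 \end{pmatrix}$, whose $\mathbb{A}$-numerical radius equals $\omega_A(T)$ by \eqref{ppp}. Consequently, the two-sided estimate of the preceding theorem becomes a two-sided estimate for $\omega_A(T)$ itself.

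Next I would simplify the auxiliary quantities at $S=T$. Because each of $\Psi_A$ and $\Phi_A$ depends on an ordered pair of operators, the symmetric choice $S=T$ forces the two entries appearing in $\min\{\Psi_A(T,S),\Psi_A(S,T)\}$ and in $\max\{\Phi_A(T,S),\Phi_A(S,T)\}$ to coincide, so both the minimum and the maximum collapse to a single value. Reading off the definitions and using $TS=T^2$, these values are $\Psi_A(T,T)=\tfrac{1}{2}\sqrt{\|TT^{\sharp_A}+T^{\sharp_A}T\|_A+2\omega_A(T^2)}$ and $\Phi_A(T,T)=\tfrac{1}{2}\sqrt{\|TT^{\sharp_A}+T^{\sharp_A}T\|_A+2c_A(T^2)}$.

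Combining these, the upper estimate \eqref{r62} gives $\omega_A(T)\le\Psi_A(T,T)$, which is exactly the right-hand inequality of the corollary, while the lower estimate \eqref{r61} gives $\omega_A(T)\ge\Phi_A(T,T)$, the left-hand inequality. I do not expect any real obstacle here, since all the analytic work was carried out in the theorem; the only thing to verify is the elementary bookkeeping that $S=T$ collapses the min/max and replaces $TS$ by $T^2$. As a consistency check, one sees that the claimed improvement over Theorem \ref{main1} is genuine: the upper bound is sharper because $\omega_A(T^2)\le[\omega_A(T)]^2\le\tfrac{1}{2}\|TT^{\sharp_A}+T^{\sharp_A}T\|_A$ by \eqref{apower} and the upper bound in \eqref{feki1}, while the lower bound is sharper simply because $c_A(T^2)\ge 0$.
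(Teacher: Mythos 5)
Your proposal is correct and coincides with the paper's own proof: the paper obtains the corollary exactly by letting $S=T$ in the preceding theorem, so that \eqref{ppp} identifies $\omega_{\mathbb{A}}$ of the off-diagonal operator matrix with $\omega_A(T)$, and the bounds \eqref{r62} and \eqref{r61} collapse to $\Phi_A(T,T)\leq\omega_A(T)\leq\Psi_A(T,T)$, which is the stated two-sided estimate. Your closing consistency check also matches the remark the paper makes right after the corollary, using \eqref{apower} and \eqref{feki1}.
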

\begin{remark}
Obviously, the first inequality in Corollary \ref{corr2020} is sharper than the first inequality in Theorem \ref{main1}. Moreover, it can be observed that
\begin{align*}
\omega_A^2(T)
&\leq \frac{1}{4}{\big\|TT^{\sharp_A} + T^{\sharp_A} T\big\|}_A + \frac{1}{2}\omega_A(T^2)\\
&\leq \frac{1}{4}{\big\|TT^{\sharp_A} + T^{\sharp_A} T\big\|}_A + \frac{1}{2}\omega_A^2(T)\quad (\text{by }\; \eqref{apower})\\
&\leq \frac{1}{4}{\big\|TT^{\sharp_A} + T^{\sharp_A} T\big\|}_A + \frac{1}{4}{\big\|TT^{\sharp_A} + T^{\sharp_A} T\big\|}_A\quad (\text{by }\; \eqref{feki1})\\
&=\frac{1}{2}{\big\|TT^{\sharp_A} + T^{\sharp_A} T\big\|}_A.
\end{align*}
This shows that the second inequality in Corollary \ref{corr2020} refines the second inequality in Theorem \ref{main1}.
\end{remark}

As an application of Corollary \ref{corr2020} we state the following result.
\begin{theorem}
Let $\mathbb{A}=\begin{pmatrix}
A&0\\
0&A
\end{pmatrix}$. Let also $T\in\mathcal{B}_A(\mathcal{H})$ be such that $\mathcal{N}(A)^\perp$ is an invariant subspace for $T$. Then,
\begin{equation}\label{involution}
\omega_\mathbb{A}\left[\begin{pmatrix}
I&T \\
0&-I
\end{pmatrix} \right]=\frac{1}{2}\left(\left\|\begin{pmatrix}
I&T \\
0&-I
\end{pmatrix} \right\|_\mathbb{A}+\left\|\begin{pmatrix}
I&T \\
0&-I
\end{pmatrix} \right\|_\mathbb{A}^{-1}\right).
\end{equation}
\end{theorem}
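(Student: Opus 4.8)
The plan is to apply Corollary~\ref{corr2020} to the operator matrix $\mathbb{T}=\begin{pmatrix} I & T \\ 0 & -I\end{pmatrix}$ on the semi-Hilbertian space $\big(\mathcal{H}\oplus\mathcal{H},\langle\cdot\mid\cdot\rangle_{\mathbb{A}}\big)$, exploiting that $\mathbb{T}$ is an involution. First I would record the two structural facts that collapse the corollary to an equality. By Lemma~\ref{lm4} the entries $I,T,0,-I\in\mathcal{B}_A(\mathcal{H})$ give $\mathbb{T}\in\mathcal{B}_{\mathbb{A}}(\mathcal{H}\oplus\mathcal{H})$, and a direct multiplication shows $\mathbb{T}^2=\mathbb{I}$. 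Consequently $W_{\mathbb{A}}(\mathbb{T}^2)=W_{\mathbb{A}}(\mathbb{I})=\{1\}$, so $\omega_{\mathbb{A}}(\mathbb{T}^2)=c_{\mathbb{A}}(\mathbb{T}^2)=1$. Feeding these two equal values into the lower and upper bounds of Corollary~\ref{corr2020} forces them to coincide, yielding the exact value
\[
\omega_{\mathbb{A}}(\mathbb{T})=\tfrac12\sqrt{\big\|\mathbb{T}\mathbb{T}^{\sharp_{\mathbb{A}}}+\mathbb{T}^{\sharp_{\mathbb{A}}}\mathbb{T}\big\|_{\mathbb{A}}+2}.
\]

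The whole theorem then reduces to the single seminorm identity $\|\mathbb{T}\mathbb{T}^{\sharp_{\mathbb{A}}}+\mathbb{T}^{\sharp_{\mathbb{A}}}\mathbb{T}\|_{\mathbb{A}}=\|\mathbb{T}\|_{\mathbb{A}}^2+\|\mathbb{T}\|_{\mathbb{A}}^{-2}$, since substituting it turns the radicand into the perfect square $(\|\mathbb{T}\|_{\mathbb{A}}+\|\mathbb{T}\|_{\mathbb{A}}^{-1})^2$ and produces \eqref{involution}. To approach this identity I would compute $\mathbb{T}^{\sharp_{\mathbb{A}}}=\begin{pmatrix} P & 0 \\ T^{\sharp_A} & -P\end{pmatrix}$ via Lemma~\ref{lm4}, using $I^{\sharp_A}=A^{\dagger}A=P$ with $P=P_{\overline{\mathcal{R}(A)}}$. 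Multiplying out gives
\[
\mathbb{T}\mathbb{T}^{\sharp_{\mathbb{A}}}+\mathbb{T}^{\sharp_{\mathbb{A}}}\mathbb{T}=\begin{pmatrix} 2P+TT^{\sharp_A} & PT-TP \\ 0 & 2P+T^{\sharp_A}T\end{pmatrix}.
\]
Here the hypothesis enters. Because any $T\in\mathcal{B}_A(\mathcal{H})$ already leaves $\mathcal{N}(A)$ invariant (from $AT=(T^{\sharp_A})^{*}A$), the assumption that $\mathcal{N}(A)^{\perp}=\overline{\mathcal{R}(A)}$ is $T$-invariant is exactly $PT=TP$, which cancels the corner $PT-TP$ and block-diagonalizes the matrix above.

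The crux, and the step I expect to be the main obstacle, is to evaluate this seminorm and bind it to $\|\mathbb{T}\|_{\mathbb{A}}$. Two ingredients drive it. Algebraically, anti-multiplicativity of $\sharp_{\mathbb{A}}$ together with $\mathbb{T}^2=\mathbb{I}$ yields $\mathbb{T}^{\sharp_{\mathbb{A}}}\mathbb{T}\cdot\mathbb{T}\mathbb{T}^{\sharp_{\mathbb{A}}}=(\mathbb{T}^{\sharp_{\mathbb{A}}})^2=(\mathbb{T}^2)^{\sharp_{\mathbb{A}}}=\mathbb{P}$, so the two $\mathbb{A}$-positive factors are mutually inverse modulo $\mathbb{P}$. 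Passing to the completion $\mathbf{R}(A^{1/2})\oplus\mathbf{R}(A^{1/2})$ (equivalently compressing by $\mathbb{P}=\mathrm{diag}(P,P)$, which turns $\|\cdot\|_{\mathbb{A}}$ into a genuine operator norm and $\sharp_{\mathbb{A}}$ into a genuine adjoint), the matrix becomes a Hilbert-space involution $\mathbb{T}_0$, while $\mathbb{T}^{\sharp_{\mathbb{A}}}\mathbb{T}$ and $\mathbb{T}\mathbb{T}^{\sharp_{\mathbb{A}}}$ become $B=\mathbb{T}_0^{*}\mathbb{T}_0$ and $B^{-1}=\mathbb{T}_0\mathbb{T}_0^{*}$, with $\|B\|=\|B^{-1}\|=\|\mathbb{T}\|_{\mathbb{A}}^2$. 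It then remains to prove the purely Hilbert-space fact $\|B+B^{-1}\|=\|B\|+\|B\|^{-1}$ for a positive invertible $B$ with $\|B\|=\|B^{-1}\|$: by the spectral theorem $\|B+B^{-1}\|=\max_{t\in\sigma(B)}(t+t^{-1})$, and since $\sigma(B)\subseteq[\|B\|^{-1},\|B\|]$ with both endpoints attained, the convex map $t\mapsto t+t^{-1}$ is maximized at $t=\|B\|$, giving $\|\mathbb{T}\|_{\mathbb{A}}^2+\|\mathbb{T}\|_{\mathbb{A}}^{-2}$. Combining this with the displayed value of $\omega_{\mathbb{A}}(\mathbb{T})$ closes the argument. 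The delicate points are the bookkeeping of $P$ in the $A$-adjoint computations (so that $I^{\sharp_A}=P$ and $T^{\sharp_A}P=T^{\sharp_A}$ are applied correctly) and the spectral maximization; the invariance hypothesis is used precisely to annihilate the corner $PT-TP$, so that the two positive factors carry the involution structure faithfully into the final computation.
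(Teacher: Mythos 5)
Your proposal is correct, and its first half coincides exactly with the paper's proof: both apply Corollary~\ref{corr2020} to $\mathbb{T}=\begin{pmatrix} I&T\\0&-I\end{pmatrix}$, use $\mathbb{T}^2=\mathbb{I}$ (so $\omega_{\mathbb{A}}(\mathbb{T}^2)=c_{\mathbb{A}}(\mathbb{T}^2)=1$) to collapse the two bounds, and arrive at $\omega_\mathbb{A}(\mathbb{T})=\tfrac12\sqrt{\|\mathbb{T}\mathbb{T}^{\sharp_\mathbb{A}}+\mathbb{T}^{\sharp_\mathbb{A}}\mathbb{T}\|_\mathbb{A}+2}$. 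After that the routes genuinely diverge. The paper makes everything explicit in terms of $\|T\|_A$: using $TP=PT$ (this is where it spends the invariance hypothesis) it block-diagonalizes $\mathbb{T}\mathbb{T}^{\sharp_\mathbb{A}}+\mathbb{T}^{\sharp_\mathbb{A}}\mathbb{T}$ and obtains $\omega_\mathbb{A}(\mathbb{T})=\tfrac12\sqrt{\|T\|_A^2+4}$, i.e.\ \eqref{wich2}; then, in the longest part of the argument, it proves $\|\mathbb{T}\|_\mathbb{A}=\tfrac12\big(\|T\|_A+\sqrt{\|T\|_A^2+4}\big)$, i.e.\ \eqref{wich3}, via an approximating-sequence construction for the lower bound and the operator-matrix estimate of \cite{feki02} for the upper bound, and combines the two formulas. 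You instead prove the structural identity $\|\mathbb{T}\mathbb{T}^{\sharp_\mathbb{A}}+\mathbb{T}^{\sharp_\mathbb{A}}\mathbb{T}\|_\mathbb{A}=\|\mathbb{T}\|_\mathbb{A}^2+\|\mathbb{T}\|_\mathbb{A}^{-2}$ directly, by transporting $\mathbb{T}$ to $\widetilde{\mathbb{T}}\in\mathcal{B}\big(\mathbf{R}(\mathbb{A}^{1/2})\big)$ via Lemmas~\ref{lr1}, \ref{lr2}, \ref{lr3} and \eqref{prosum}: there $\widetilde{\mathbb{T}}$ is an honest involution, $B=\widetilde{\mathbb{T}}^*\widetilde{\mathbb{T}}$ is positive invertible with $B^{-1}=\widetilde{\mathbb{T}}\widetilde{\mathbb{T}}^*$ and $\|B\|=\|B^{-1}\|=\|\mathbb{T}\|_\mathbb{A}^2$, and the spectral theorem gives $\|B+B^{-1}\|=\|B\|+\|B\|^{-1}$ because $t\mapsto t+t^{-1}$ is convex on $\sigma(B)\subseteq[\|B\|^{-1},\|B\|]$ and $\|B\|\in\sigma(B)$. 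This is the Abu-Omar--Kittaneh argument \cite{A.K.1} lifted through the canonical representation, all of whose ingredients are indeed available in the paper. What your route buys: no recourse to \cite{feki02}, no sequence construction, and no use of the hypothesis that $\mathcal{N}(A)^\perp$ is $T$-invariant, so it actually establishes \eqref{involution} for every $T\in\mathcal{B}_A(\mathcal{H})$ (indeed for every $\mathbb{A}$-adjointable involution). What it loses: the explicit values \eqref{wich2} and \eqref{wich3} in terms of $\|T\|_A$, which the corollary following the theorem reuses.

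One inaccuracy in your write-up, presentational rather than a gap: you assert that the hypothesis ``enters'' to cancel the corner $PT-TP$ in the block matrix, but your crux computation never uses that block form --- the identities $\widetilde{\mathbb{T}\mathbb{T}^{\sharp_\mathbb{A}}+\mathbb{T}^{\sharp_\mathbb{A}}\mathbb{T}}=B^{-1}+B$ and $\widetilde{\mathbb{T}}^{\,2}=I$ hold for any $T\in\mathcal{B}_A(\mathcal{H})$, hypothesis or not. So that paragraph is dead weight in your argument; either delete it, or keep it only as a remark that the invariance assumption is superfluous for your proof.
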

\begin{proof}
Let $\mathbb{T}=\begin{pmatrix}
I&T \\
0&-I
\end{pmatrix}$. It can be seen that $\mathbb{T}^2=\begin{pmatrix}
I&0 \\
0&I
\end{pmatrix}$. Hence, by Corollary \ref{corr2020} we have
\begin{align}\label{wich}
\omega_\mathbb{A}(\mathbb{T})=\frac{1}{2}\sqrt{{\big\|\mathbb{T}\mathbb{T}^{\sharp_\mathbb{A}} + \mathbb{T}^{\sharp_\mathbb{A}} \mathbb{T}\big\|}_\mathbb{A} + 2}.
\end{align}
Now, by Lemma \ref{lm4} we have $\mathbb{T}^{\sharp_\mathbb{A}}=\begin{pmatrix}
P_{\overline{\mathcal{R}(A)}} &0 \\
T^{\sharp_A}&-P_{\overline{\mathcal{R}(A)}}
\end{pmatrix}$. Moreover, since $\mathcal{N}(A)^\perp$ is an invariant subspace for $T$, then by \cite[Lemma 1.1]{bakfeki04} we have $TP_{\overline{\mathcal{R}(A)}} =P_{\overline{\mathcal{R}(A)}}T$. Thus, a short calculation reveals that
$$\mathbb{T}\mathbb{T}^{\sharp_\mathbb{A}} + \mathbb{T}^{\sharp_\mathbb{A}} \mathbb{T}=\begin{pmatrix}
TT^{\sharp_A}+2P_{\overline{\mathcal{R}(A)}} &0 \\
0&T^{\sharp_A}T+2P_{\overline{\mathcal{R}(A)}}
\end{pmatrix}.$$
This implies, by \cite{feki04}, that
$$
\|\mathbb{T}\mathbb{T}^{\sharp_\mathbb{A}} + \mathbb{T}^{\sharp_\mathbb{A}} \mathbb{T}\|_\mathbb{A}=\max\left\{\|TT^{\sharp_A}+2P_{\overline{\mathcal{R}(A)}} \|_A ,\|T^{\sharp_A}T+2P_{\overline{\mathcal{R}(A)}} \|_A \right\}=\|T\|_A^2+2,
$$
where the last equality follows by using \eqref{aself1} since the operators $TT^{\sharp_A}+2P_{\overline{\mathcal{R}(A)}}$ and $T^{\sharp_A}T+2P_{\overline{\mathcal{R}(A)}}$ are $A$-positive. So, by taking into consideration \eqref{wich} we get
\begin{align}\label{wich2}
\omega_\mathbb{A}(\mathbb{T})=\frac{1}{2}\sqrt{\|T\|_A^2+4}.
\end{align}
Now, we will prove that
\begin{align}\label{wich3}
\|\mathbb{T}\|_\mathbb{A}=\frac{1}{\sqrt{2}}\sqrt{2+\|T\|_A^2+\sqrt{\|T\|_A^4+4\|T\|_A^2} }=\frac{1}{2}\sqrt{\|T\|_A^2+4}+\frac{1}{2}\|T\|_A.
\end{align}
By \eqref{newsemi} there exists two sequences of $A$-unit vectors $\{x_n\}$ and $\{y_n\}$ in $\mathcal{H}$ such that
$$\lim_{n\to +\infty}|\langle Tx_n\mid y_n\rangle_A|=\|T\|_A.$$
This implies, by applying the Cauchy-Schwarz inequality, that $\displaystyle\lim_{n\to +\infty}\|Tx_n\|_A=\|T\|_A.$ Let $(a,b)\in \mathbb{R}^2$ be such that $a^2+b^2=1$ and
\begin{equation}\label{matrixnorm}
\left\|\begin{pmatrix}
1&\|T\|_A \\
0&1
\end{pmatrix}\right\|=\left\|\begin{pmatrix}
1&\|T\|_A \\
0&1
\end{pmatrix}\begin{pmatrix}a\\b\end{pmatrix}\right\|=\sqrt{\left(a+b\|T\|_A\right)^2+b^2}.
\end{equation}
For $n\in \mathbb{N}$, let $\langle Tx_n\mid y_n\rangle_A=|\langle Tx_n\mid y_n\rangle_A|\,e^{i\alpha_n}$ for some $\alpha_n\in \mathbb{R}$. Let $\{X_{n}\}=\{(ae^{i\alpha_n}y_n,bx_n)^T\}$ be a sequence in $\mathcal{H}\oplus \mathcal{H}$. It is not difficult to see that $\|X_{n}\|_\mathbb{A}=1$. Moreover, a short calculation reveals that
\begin{align*}
\left\|\begin{pmatrix}
I&T \\
0&-I
\end{pmatrix}\right\|_\mathbb{A}^2
&\geq \left\|\begin{pmatrix}
I&T \\
0&-I
\end{pmatrix}\begin{pmatrix}
ae^{i\alpha_n}y_n\\
bx_n
\end{pmatrix}\right\|_\mathbb{A}^2\\
 &=\|ae^{i\alpha_n}y_n+bTx_n\|_A^2+\|bx_n\|_A^2\\
 &=a^2+b^2\|Tx_n\|_A^2+2ab|\langle Tx_n\mid y_n\rangle_A|+|b|^2\\
  &\xrightarrow{n \to +\infty} a^2+b^2\|T\|_A^2+2ab\|T\|_A+|b|^2\\
  &=\left\|\begin{pmatrix}
1&\|T\|_A \\
0&1
\end{pmatrix}\right\|^2\quad (\text{by } \eqref{matrixnorm}).
\end{align*}
Hence,
$$
\left\|\begin{pmatrix}
I&T \\
0&-I
\end{pmatrix}\right\|_\mathbb{A}\geq \left\|\begin{pmatrix}
1&\|T\|_A \\
0&1
\end{pmatrix}\right\|.
$$
Moreover, by \cite{feki02} we have
$$
\left\|\begin{pmatrix}
I&T \\
0&-I
\end{pmatrix}\right\|_\mathbb{A}\leq \left\|\begin{pmatrix}
1&\|T\|_A \\
0&1
\end{pmatrix}\right\|.
$$
Hence, $\|\mathbb{T}\|_\mathbb{A}=\left\|\begin{pmatrix}
1&\|T\|_A \\
0&1
\end{pmatrix}\right\|$. On the other, by using \eqref{absolute}, we see that
\begin{align*}
\left\|\begin{pmatrix}
1&\|T\|_A \\
0&1
\end{pmatrix}\right\|^2
& =r\left[\begin{pmatrix}
1& 0\\
\|T\|_A&1
\end{pmatrix}\begin{pmatrix}
1&\|T\|_A \\
0&1
\end{pmatrix}\right] \\
 &=r\left[\begin{pmatrix}
1& \|T\|_A\\
\|T\|_A&\|T\|_A^2+1
\end{pmatrix}\right]\\
 &=\frac{1}{2}\left(2+\|T\|_A^2+\sqrt{\|T\|_A^4+4\|T\|_A^2}\right).
\end{align*}
Thus, we prove the first equality in \eqref{wich3}. The second equality in \eqref{wich3} follows immediately. Therefore, \eqref{wich3} is proved. Now, by combining \eqref{wich2} together with \eqref{wich3}, we get \eqref{involution} as desired.
\end{proof}

\begin{corollary}
Let $\mathbb{A}=\begin{pmatrix}
A&0\\
0&A
\end{pmatrix}$. Let also $\mathbb{T}=\begin{pmatrix}
I&T \\
0&-I
\end{pmatrix}$ be such that $T\in\mathcal{B}_A(\mathcal{H})$ and $\mathcal{N}(A)^\perp$ is an invariant subspace for $T$. Then, the following assertions hold
\begin{itemize}
  \item [(1)] $\|\Re_A(\mathbb{T})\|_\mathbb{A}=\omega_\mathbb{A}(\mathbb{T})$.
  \item [(2)] $\|\Im_A(\mathbb{T})\|_\mathbb{A}=\frac{1}{2}\left(\|\mathbb{T}\|_\mathbb{A}-\|\mathbb{T}\|_\mathbb{A}^{-1}\right)$.
\end{itemize}
\end{corollary}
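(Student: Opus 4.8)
The plan is to establish the two equalities separately, in each case reducing an $\mathbb{A}$-seminorm to quantities already computed in the preceding theorem. Throughout write $P:=P_{\overline{\mathcal{R}(A)}}$ and $\Re_A(\mathbb{T})=\tfrac12(\mathbb{T}+\mathbb{T}^{\sharp_\mathbb{A}})$, $\Im_A(\mathbb{T})=\tfrac{1}{2i}(\mathbb{T}-\mathbb{T}^{\sharp_\mathbb{A}})$, and recall from the preceding theorem that $\mathbb{T}^{\sharp_\mathbb{A}}=\begin{pmatrix}P&0\\T^{\sharp_A}&-P\end{pmatrix}$, that $TP=PT$ (from the invariance of $\mathcal{N}(A)^{\perp}$), that $\mathbb{T}^2=I_{\mathcal{H}\oplus\mathcal{H}}$, and that $\big\|\mathbb{T}\mathbb{T}^{\sharp_\mathbb{A}}+\mathbb{T}^{\sharp_\mathbb{A}}\mathbb{T}\big\|_\mathbb{A}=\|T\|_A^2+2$. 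I shall also use the elementary identities $AP=A$ and $PT^{\sharp_A}=T^{\sharp_A}P=T^{\sharp_A}$ (the first because $\mathcal{R}(T^{\sharp_A})\subseteq\overline{\mathcal{R}(A)}$, the second because $AP_{\mathcal{N}(A)}=0$ gives $T^{\sharp_A}P_{\mathcal{N}(A)}=A^\dag T^*AP_{\mathcal{N}(A)}=0$), together with $(P_{\mathcal{N}(A)})^{\sharp_A}=0$ and $(T^{\sharp_A})^{\sharp_A}=PTP$.

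For part (1), the starting observation is that $\Re_A(\mathbb{T})$ is $\mathbb{A}$-selfadjoint, so its square is $\mathbb{A}$-positive and $\langle[\Re_A(\mathbb{T})]^2x\mid x\rangle_\mathbb{A}=\|\Re_A(\mathbb{T})x\|_\mathbb{A}^2$ for all $x$; applying \eqref{aself1} to $[\Re_A(\mathbb{T})]^2$ gives $\|\Re_A(\mathbb{T})\|_\mathbb{A}^2=\big\|[\Re_A(\mathbb{T})]^2\big\|_\mathbb{A}$. Next I expand
\[
4[\Re_A(\mathbb{T})]^2=\mathbb{T}^2+(\mathbb{T}^{\sharp_\mathbb{A}})^2+\big(\mathbb{T}\mathbb{T}^{\sharp_\mathbb{A}}+\mathbb{T}^{\sharp_\mathbb{A}}\mathbb{T}\big),
\]
and a direct block multiplication (using $PT^{\sharp_A}=T^{\sharp_A}P=T^{\sharp_A}$ and $P^2=P$) shows $(\mathbb{T}^{\sharp_\mathbb{A}})^2=P_{\overline{\mathcal{R}(\mathbb{A})}}$, while $\mathbb{T}^2=I$. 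As $P_{\overline{\mathcal{R}(\mathbb{A})}}$ acts as the identity for $\langle\cdot\mid\cdot\rangle_\mathbb{A}$, the first two summands contribute the constant $2$ to the quadratic form on every $\mathbb{A}$-unit vector; since the operator is $\mathbb{A}$-positive, \eqref{aself1} yields $\big\|(\mathbb{T}+\mathbb{T}^{\sharp_\mathbb{A}})^2\big\|_\mathbb{A}=2+\big\|\mathbb{T}\mathbb{T}^{\sharp_\mathbb{A}}+\mathbb{T}^{\sharp_\mathbb{A}}\mathbb{T}\big\|_\mathbb{A}=\|T\|_A^2+4$. Therefore $\|\Re_A(\mathbb{T})\|_\mathbb{A}=\tfrac12\sqrt{\|T\|_A^2+4}$, which is exactly $\omega_\mathbb{A}(\mathbb{T})$ by \eqref{wich2}.

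For part (2), homogeneity of the seminorm gives $\|\Im_A(\mathbb{T})\|_\mathbb{A}=\tfrac12\|\mathbb{T}-\mathbb{T}^{\sharp_\mathbb{A}}\|_\mathbb{A}$. Setting $R=\mathbb{T}-\mathbb{T}^{\sharp_\mathbb{A}}=\begin{pmatrix}P_{\mathcal{N}(A)}&T\\-T^{\sharp_A}&-P_{\mathcal{N}(A)}\end{pmatrix}$ and using \eqref{diez}, I have $\|R\|_\mathbb{A}^2=\|R^{\sharp_\mathbb{A}}R\|_\mathbb{A}$. Computing $R^{\sharp_\mathbb{A}}$ with Lemma \ref{lm4} and $(P_{\mathcal{N}(A)})^{\sharp_A}=0$ gives $R^{\sharp_\mathbb{A}}=\begin{pmatrix}0&-(T^{\sharp_A})^{\sharp_A}\\T^{\sharp_A}&0\end{pmatrix}$, and then the identities $PT^{\sharp_A}=T^{\sharp_A}P=T^{\sharp_A}$ and $(T^{\sharp_A})^{\sharp_A}=PTP$ make the off-diagonal blocks of $R^{\sharp_\mathbb{A}}R$ vanish, leaving $R^{\sharp_\mathbb{A}}R=\mathrm{diag}\big((T^{\sharp_A})^{\sharp_A}T^{\sharp_A},\,T^{\sharp_A}T\big)$. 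The block-diagonal seminorm formula from \cite{feki04} and \eqref{diez} give $\|R^{\sharp_\mathbb{A}}R\|_\mathbb{A}=\max\{\|T^{\sharp_A}\|_A^2,\|T\|_A^2\}=\|T\|_A^2$, so $\|\Im_A(\mathbb{T})\|_\mathbb{A}=\tfrac12\|T\|_A$; a one-line check from \eqref{wich3} that $\tfrac12\big(\|\mathbb{T}\|_\mathbb{A}-\|\mathbb{T}\|_\mathbb{A}^{-1}\big)=\tfrac12\|T\|_A$ finishes the proof.

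The block arithmetic is routine; the one genuinely delicate point is the bookkeeping of the projections $P_{\overline{\mathcal{R}(A)}}$ and $P_{\mathcal{N}(A)}$. One must verify that the off-diagonal blocks truly cancel — which rests on the commutation $TP=PT$ supplied by the invariance hypothesis, just as in the preceding theorem — and that the diagonal projection terms behave as scalars with respect to $\langle\cdot\mid\cdot\rangle_\mathbb{A}$. Once these are handled correctly, both equalities collapse onto the value $\|\mathbb{T}\mathbb{T}^{\sharp_\mathbb{A}}+\mathbb{T}^{\sharp_\mathbb{A}}\mathbb{T}\|_\mathbb{A}=\|T\|_A^2+2$ and the formulas \eqref{wich2}--\eqref{wich3} for $\omega_\mathbb{A}(\mathbb{T})$ and $\|\mathbb{T}\|_\mathbb{A}$ established just above.
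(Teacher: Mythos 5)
Your proof is correct, and for part (1) it takes a genuinely different route from the paper's. Both arguments ultimately reduce the corollary to the quantities established in the preceding theorem, namely $\omega_\mathbb{A}(\mathbb{T})=\tfrac12\sqrt{\|T\|_A^2+4}$ (equation \eqref{wich2}), $\|\mathbb{T}\|_\mathbb{A}=\tfrac12\sqrt{\|T\|_A^2+4}+\tfrac12\|T\|_A$ (equation \eqref{wich3}), and then check the elementary identity $\|\mathbb{T}\|_\mathbb{A}^{-1}=\tfrac12\sqrt{\|T\|_A^2+4}-\tfrac12\|T\|_A$. The difference lies in how $\|\Re_A(\mathbb{T})\|_\mathbb{A}$ is evaluated. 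The paper passes to the adjoint via $\|X\|_\mathbb{A}=\|X^{\sharp_\mathbb{A}}\|_\mathbb{A}$, writes $[\Re_A(\mathbb{T})]^{\sharp_\mathbb{A}}$ explicitly as a $2\times 2$ block matrix, squares it by hand, and evaluates the resulting block-diagonal operator by the max formula of \cite{feki04}, recomputing the value $\tfrac{\|T\|_A^2}{4}+1$ from scratch. You instead prove the $C^*$-type identity $\|\Re_A(\mathbb{T})\|_\mathbb{A}^2=\|[\Re_A(\mathbb{T})]^2\|_\mathbb{A}$ directly from \eqref{aself1} (using that the square of an $\mathbb{A}$-selfadjoint operator is $\mathbb{A}$-positive with quadratic form $\|\Re_A(\mathbb{T})x\|_\mathbb{A}^2$), expand $4[\Re_A(\mathbb{T})]^2=\mathbb{T}^2+(\mathbb{T}^{\sharp_\mathbb{A}})^2+(\mathbb{T}\mathbb{T}^{\sharp_\mathbb{A}}+\mathbb{T}^{\sharp_\mathbb{A}}\mathbb{T})$, observe $\mathbb{T}^2=I$ and $(\mathbb{T}^{\sharp_\mathbb{A}})^2=P_{\overline{\mathcal{R}(\mathbb{A})}}$, and recycle the already-computed norm $\|\mathbb{T}\mathbb{T}^{\sharp_\mathbb{A}}+\mathbb{T}^{\sharp_\mathbb{A}}\mathbb{T}\|_\mathbb{A}=\|T\|_A^2+2$ through a quadratic-form argument. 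This buys a cleaner conceptual link (the real part's norm is literally $\tfrac14(2+\|\mathbb{T}\mathbb{T}^{\sharp_\mathbb{A}}+\mathbb{T}^{\sharp_\mathbb{A}}\mathbb{T}\|_\mathbb{A})$) and avoids any block multiplication; the paper's computation is more self-contained. For part (2), where the paper merely says ``proceed as in (1),'' your explicit diagonalization of $R^{\sharp_\mathbb{A}}R$ supplies the omitted details correctly, including the verification $\|R^{\sharp_\mathbb{A}}R\|_\mathbb{A}=\|T\|_A^2$ via \eqref{diez}.

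One small presentational slip, not a gap: your closing paragraph attributes the vanishing of the off-diagonal blocks to the commutation $TP=PT$. In fact, in your own computations those cancellations follow from the unconditional identities $PT^{\sharp_A}=T^{\sharp_A}P=T^{\sharp_A}$, $(T^{\sharp_A})^{\sharp_A}=PTP$ and $PP_{\mathcal{N}(A)}=0$; the invariance hypothesis on $\mathcal{N}(A)^\perp$ enters your proof only through the quoted results \eqref{wich2}, \eqref{wich3} and the value $\|\mathbb{T}\mathbb{T}^{\sharp_\mathbb{A}}+\mathbb{T}^{\sharp_\mathbb{A}}\mathbb{T}\|_\mathbb{A}=\|T\|_A^2+2$, whose proofs in the preceding theorem do require it.
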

\begin{proof}
\noindent (1)\;It can be seen that
\begin{align*}
\|\Re_A(\mathbb{T})\|_\mathbb{A}^2
& =\left\|[\Re_A(\mathbb{T})]^{\sharp_{\mathbb{A}}}\right\|_\mathbb{A}^2\\
& =\left\|\begin{pmatrix}
P_{\overline{\mathcal{R}(A)}} &\frac{(T^{\sharp_A})^{\sharp_A}}{2}\\
\frac{T^{\sharp_A}}{2}&-P_{\overline{\mathcal{R}(A)}}
\end{pmatrix}\right\|_\mathbb{A}^2 \\
 &=\left\|\begin{pmatrix}
P_{\overline{\mathcal{R}(A)}} &\frac{(T^{\sharp_A})^{\sharp_A}}{2}\\
\frac{T^{\sharp_A}}{2}&-P_{\overline{\mathcal{R}(A)}}
\end{pmatrix}
\begin{pmatrix}
P_{\overline{\mathcal{R}(A)}} &\frac{(T^{\sharp_A})^{\sharp_A}}{2}\\
\frac{T^{\sharp_A}}{2}&-P_{\overline{\mathcal{R}(A)}}
\end{pmatrix}
\right\|_\mathbb{A}\quad (\text{by }\,\eqref{diez})\\
& =\left\|\begin{pmatrix}
P_{\overline{\mathcal{R}(A)}} +\frac{(T^{\sharp_A})^{\sharp_A}T^{\sharp_A}}{4}&0\\
0&P_{\overline{\mathcal{R}(A)}} +\frac{T^{\sharp_A}(T^{\sharp_A})^{\sharp_A}}{4}
\end{pmatrix}
\right\|_\mathbb{A},
\end{align*}
where the last equality follows by using the fact that $\mathcal{N}(A)^\perp$ is an invariant subspace for $T$. So, we obtain
\begin{align*}
\|\Re_A(\mathbb{T})\|_\mathbb{A}^2
& =\max\left\{\left\|P_A+\frac{(T^{\sharp_A})^{\sharp_A}T^{\sharp_A}}{4}\right\|_A ,\left\|P_A+\frac{T^{\sharp_A}(T^{\sharp_A})^{\sharp_A}}{4}\right\|_A\right\} \\
 &=\frac{\|T^{\sharp_A}\|_A^2}{4}+1=\frac{\|T\|_A^2+4}{4}.
\end{align*}
Hence, we prove the desired property.
\par \vskip 0.1 cm \noindent (2)\;By proceeding as in (1), one can prove that $\|\Im_A(\mathbb{T})\|_\mathbb{A}=\frac{1}{2}\|T\|_A$. So, the required result holds by using \eqref{wich3} and \eqref{involution}.
\end{proof}

Now, we turn your attention to the study of some $A$-numerical radius for products and commutators of semi-Hilbert space operators. Our first result in this context generalizes \cite[Corollary 4.3]{zamani1} since $\mathcal{B}_{A^{1/2}}(\mathcal{H})\subseteq \mathcal{B}_{A}(\mathcal{H})$ and reads as follows.
\begin{theorem}
Let $T,S\in \mathcal{B}_{A^{1/2}}(\mathcal{H})$. Then,
\begin{equation}\label{a7ad1}
\omega_A(TS)\leq 4\,\omega_A(T)\,\omega_A(S).
\end{equation}
If $TS=ST$, then
\begin{equation}\label{a7ad2}
\omega_A(TS)\leq 2\omega_A(T)\,\omega_A(S).
\end{equation}
\end{theorem}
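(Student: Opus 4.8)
The plan is to obtain the first inequality at once from the norm--numerical-radius equivalence \eqref{refine1} and the submultiplicativity \eqref{sousmultiplicative}, and to derive the commuting estimate from a scaled polarization identity together with the power inequality \eqref{apower} and an optimization over a real parameter. Since $\mathcal{B}_{A^{1/2}}(\mathcal{H})$ is a subalgebra of $\mathcal{B}(\mathcal{H})$, the product $TS$ again lies in $\mathcal{B}_{A^{1/2}}(\mathcal{H})$, so all quantities below are finite. For \eqref{a7ad1} I would simply chain the right-hand inequality of \eqref{refine1}, then \eqref{sousmultiplicative}, and then the left-hand inequality of \eqref{refine1} applied to each factor:
\[
\omega_A(TS)\le\|TS\|_A\le\|T\|_A\,\|S\|_A\le 2\omega_A(T)\cdot 2\omega_A(S)=4\,\omega_A(T)\,\omega_A(S).
\]

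For the commuting case, the crucial observation is that $TS=ST$ allows a scaled polarization identity: for every real $t>0$,
\[
4TS=\bigl(tT+t^{-1}S\bigr)^2-\bigl(tT-t^{-1}S\bigr)^2,
\]
the cross terms combining precisely because $T$ and $S$ commute. Applying the triangle inequality for the seminorm $\omega_A(\cdot)$, then the power inequality \eqref{apower} with $n=2$ to each of the two squares, and finally subadditivity together with the positive homogeneity of $\omega_A(\cdot)$, I would arrive at
\[
4\,\omega_A(TS)\le\bigl[\omega_A(tT+t^{-1}S)\bigr]^2+\bigl[\omega_A(tT-t^{-1}S)\bigr]^2\le 2\bigl[t\,\omega_A(T)+t^{-1}\omega_A(S)\bigr]^2,
\]
so that $\omega_A(TS)\le\tfrac12\bigl[t\,\omega_A(T)+t^{-1}\omega_A(S)\bigr]^2$ holds for every $t>0$.

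The step I expect to be the real obstacle is extracting the sharp constant $2$: the naive choice $t=1$ yields only $\omega_A(TS)\le\tfrac12\bigl(\omega_A(T)+\omega_A(S)\bigr)^2$, which is weaker than the claim unless $\omega_A(T)=\omega_A(S)$. When both $\omega_A(T)$ and $\omega_A(S)$ are nonzero I would therefore optimize, taking $t=\sqrt{\omega_A(S)/\omega_A(T)}$; this balances the two terms so that $t\,\omega_A(T)=t^{-1}\omega_A(S)=\sqrt{\omega_A(T)\omega_A(S)}$ and gives
\[
\omega_A(TS)\le\tfrac12\Bigl(2\sqrt{\omega_A(T)\omega_A(S)}\Bigr)^2=2\,\omega_A(T)\,\omega_A(S).
\]
Finally, the degenerate case must be treated separately: if, say, $\omega_A(T)=0$, then \eqref{refine1} forces $\|T\|_A=0$, whence $\omega_A(TS)\le\|TS\|_A\le\|T\|_A\,\|S\|_A=0$ by \eqref{sousmultiplicative}, so \eqref{a7ad2} holds trivially.
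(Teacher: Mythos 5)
Your proposal is correct and takes essentially the same route as the paper: the first inequality is chained identically through \eqref{refine1} and \eqref{sousmultiplicative}, and the commuting case rests on the same polarization-plus-\eqref{apower} device, with your optimization over $t$ playing exactly the role of the paper's normalization $T\mapsto T/\omega_A(T)$, $S\mapsto S/\omega_A(S)$. The only notable difference is that the paper first establishes the anticommutator bound $\omega_A(TS+ST)\le 4\,\omega_A(T)\,\omega_A(S)$ for arbitrary, not necessarily commuting, $T,S$ via the identity $TS+ST=\tfrac{1}{2}\left[(T+S)^2-(T-S)^2\right]$ and only invokes $TS=ST$ at the very end, whereas you build commutativity into the identity from the start; your handling of the degenerate case (one of the radii vanishing) is in fact slightly more complete than the paper's, which treats only the case where both vanish.
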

\begin{proof}
It follows from \eqref{refine1} and \eqref{sousmultiplicative} that
$$\omega_A(TS)\leq\|TS\|_A\leq \|T\|_A\|S\|_A\leq 4\,\omega_A(T)\omega_A(S).$$
This proves \eqref{a7ad1}. Now, in order to establish \eqref{a7ad2}, we first prove that
\begin{align}\label{jdid}
\omega_A(TS +ST) \leq4\,\omega_A(T)\omega_A(S).
\end{align}
Assume that $T,S\in \mathcal{B}_{A^{1/2}}(\mathcal{H})$ and satisfy $\omega_A(T)=\omega_A(S)=1$. It is not difficult to observe that
$$TS +ST=\frac{1}{2}\left[(T+S)^2-(T-S)^2\right].$$
So, by using the fact that $\omega_A(\cdot)$ is a seminorm and \eqref{apower} we see that
\begin{align*}
\omega_A(TS +ST)
&\leq \frac{1}{2}\left[\omega_A^2(T+S)+\omega_A^2(T-S)\right]\\
 & \leq \left[\omega_A(T)+\omega_A(S)\right]^2=4.
\end{align*}
Hence,
\begin{align}\label{equal1}
\omega_A(TS +ST)
 & \leq 4,
\end{align}
for all $T, S\in\mathcal{B}_{A^{1/2}}(\mathcal{H})$ with $\omega_A(T)=\omega_A(S)=1$. If $\omega_A(T)=\omega_A(S)=0$, then $AT=AS=0$ and so \eqref{jdid} holds trivially. Now, assume that $\omega_A(T)\neq0$ and $\omega_A(S)\neq0$. By replacing $T$ and $S$ by $\frac{T}{\omega_A(T)}$ and $\frac{S}{\omega_A(S)}$ respectively in \eqref{equal1}, we get \eqref{jdid} as required. Now, if $TS=ST$, then \eqref{a7ad2} follows immediately from \eqref{jdid}.
\end{proof}

Our next result reads as follows.
\begin{theorem}\label{T.2.15}
Let $T_1, T_2,S_1,S_2\in\mathcal{B}_{A}(\mathcal{H})$. Then
\begin{align*}
\omega_A(T_1S_1 \pm S_2T_2) &\leq \sqrt{{\big\|T_1T_1^{\sharp_A} + T_2^{\sharp_A} T_2\big\|}_A}\sqrt{{\big\|S_1^{\sharp_A} S_1+S_2S_2^{\sharp_A}\big\|}_A}.
\end{align*}
\end{theorem}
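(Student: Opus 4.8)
The plan is to work directly from the definition $\omega_A(X)=\sup\{|\langle Xx\mid x\rangle_A|\,;\,\|x\|_A=1\}$ and to estimate $|\langle (T_1S_1\pm S_2T_2)x\mid x\rangle_A|$ for a fixed $A$-unit vector $x$. First I would split the inner product and push one factor across the semi-inner product using the defining relation of the $A$-adjoint, namely $\langle T_1S_1x\mid x\rangle_A=\langle S_1x\mid T_1^{\sharp_A}x\rangle_A$ and $\langle S_2T_2x\mid x\rangle_A=\langle T_2x\mid S_2^{\sharp_A}x\rangle_A$. The triangle inequality then disposes of the $\pm$ sign uniformly, leaving
\[
|\langle (T_1S_1\pm S_2T_2)x\mid x\rangle_A|\leq |\langle S_1x\mid T_1^{\sharp_A}x\rangle_A|+|\langle T_2x\mid S_2^{\sharp_A}x\rangle_A|.
\]

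Next I would apply the Cauchy--Schwarz inequality for the semi-inner product $\langle\cdot\mid\cdot\rangle_A$ to each of the two terms, which bounds the right-hand side by $\|S_1x\|_A\|T_1^{\sharp_A}x\|_A+\|T_2x\|_A\|S_2^{\sharp_A}x\|_A$. The decisive step is then to apply the elementary Cauchy--Schwarz inequality in $\mathbb{R}^2$ \emph{with the correct pairing}: grouping the two ``$T$-seminorms'' $\|T_1^{\sharp_A}x\|_A,\|T_2x\|_A$ into one factor and the two ``$S$-seminorms'' $\|S_1x\|_A,\|S_2^{\sharp_A}x\|_A$ into the other. This yields
\[
\|S_1x\|_A\|T_1^{\sharp_A}x\|_A+\|T_2x\|_A\|S_2^{\sharp_A}x\|_A\leq \sqrt{\|T_1^{\sharp_A}x\|_A^2+\|T_2x\|_A^2}\;\sqrt{\|S_1x\|_A^2+\|S_2^{\sharp_A}x\|_A^2}.
\]

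To recognise the two radicands as the operators in the statement evaluated at $x$, I would use the identities $\|T_2x\|_A^2=\langle T_2^{\sharp_A}T_2x\mid x\rangle_A$ and $\|S_1x\|_A^2=\langle S_1^{\sharp_A}S_1x\mid x\rangle_A$, immediate from the $A$-adjoint relation, together with the slightly less obvious $\|T_1^{\sharp_A}x\|_A^2=\langle T_1T_1^{\sharp_A}x\mid x\rangle_A$ and $\|S_2^{\sharp_A}x\|_A^2=\langle S_2S_2^{\sharp_A}x\mid x\rangle_A$; these follow by writing $\|T_1^{\sharp_A}x\|_A^2=\langle AT_1^{\sharp_A}x\mid T_1^{\sharp_A}x\rangle$ and invoking $AT_1^{\sharp_A}=T_1^*A$. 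Consequently the first radicand equals $\langle (T_1T_1^{\sharp_A}+T_2^{\sharp_A}T_2)x\mid x\rangle_A$ and the second equals $\langle (S_1^{\sharp_A}S_1+S_2S_2^{\sharp_A})x\mid x\rangle_A$. Since each of these is a sum of two $A$-positive operators, hence $A$-selfadjoint, the identity \eqref{aself1} gives $\langle Px\mid x\rangle_A\leq \|P\|_A$ whenever $\|x\|_A=1$. Combining all the estimates and taking the supremum over all $x$ with $\|x\|_A=1$ produces the claimed inequality.

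I expect the main obstacle to be purely organisational: choosing the pairing in the $\mathbb{R}^2$ Cauchy--Schwarz step so that the $T$'s and the $S$'s are separated exactly as in the statement, and correctly producing $T_1T_1^{\sharp_A}$ (rather than $T_1^{\sharp_A}T_1$) out of $\|T_1^{\sharp_A}x\|_A^2$ through the relation $AT^{\sharp_A}=T^*A$. No deeper difficulty is anticipated, since every ingredient — the semi-Hilbertian Cauchy--Schwarz inequality and the seminorm characterisation of $\omega_A$ for $A$-selfadjoint operators recorded in \eqref{aself1} — is already at our disposal.
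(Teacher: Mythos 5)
Your proposal is correct and takes essentially the same route as the paper's own proof: the same splitting via the $A$-adjoint relation ($\langle T_1S_1x\mid x\rangle_A=\langle S_1x\mid T_1^{\sharp_A}x\rangle_A$, $\langle S_2T_2x\mid x\rangle_A=\langle T_2x\mid S_2^{\sharp_A}x\rangle_A$), the same two applications of Cauchy--Schwarz (first in the semi-inner product, then in $\mathbb{R}^2$ with the $T$-terms and $S$-terms paired exactly as you describe), the same identification of the radicands as $\langle (T_1T_1^{\sharp_A}+T_2^{\sharp_A}T_2)x\mid x\rangle_A$ and $\langle (S_1^{\sharp_A}S_1+S_2S_2^{\sharp_A})x\mid x\rangle_A$, and the same final supremum. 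The only cosmetic differences are that the paper works with the squared quantity throughout and bounds $\langle Px\mid x\rangle_A$ by $\|P\|_A$ directly, whereas you invoke \eqref{aself1} after noting $A$-selfadjointness; both justifications are valid.
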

\begin{proof}
Let $x\in \mathcal{H}$ be such that ${\|x\|}_A = 1$. An application of the Cauchy-Schwarz inequality gives
\begin{align*}
\Big|{\big\langle (T_1S_1 \pm S_2T_2) x\mid x \big\rangle}_A\Big|^2
&\leq \Big(\big|{\langle T_1S_1 x\mid x\rangle}_A\big| + \big|{\langle S_2T_2 x\mid x\rangle}_A\big|\Big)^2\\
& = \Big(\big|{\langle S_1 x\mid T_1^{\sharp_A} x\rangle}_A\big| + \big|{\langle T_2 x\mid S_2^{\sharp_A} x\rangle}_A\big|\Big)^2\\
& \leq \Big({\|S_1x\|}_A {\|T_1^{\sharp_A} x\|}_A + {\|T_2x\|}_A{\|S_2^{\sharp_A} x\|}_A\Big)^2\\
& \leq \Big({\|T_2x\|}^2_A  + {\|T_1^{\sharp_A} x\|}^2_A\Big)\Big({\|S_1x\|}^2_A + {\|S_2^{\sharp_A} x\|}^2_A\Big)\\
& = {\langle  \big(T_2^{\sharp_A}T_2 + T_1T_1^{\sharp_A}\big)x\mid x\rangle}_A{\langle \big(S_1^{\sharp_A}S_1 + S_2S_2^{\sharp_A}\big) x\mid x\rangle}_A\\
& \leq {\big\|T_1T_1^{\sharp_A} + T_2^{\sharp_A} T_2\big\|}_A{\big\|S_1^{\sharp_A} S_1+S_2S_2^{\sharp_A}\big\|}_A.
\end{align*}
Thus
\begin{align*}
\Big|{\big\langle (T_1S_1 \pm S_2T_2) x\mid x \big\rangle}_A\Big|
\leq {\big\|T_1T_1^{\sharp_A} + T_2^{\sharp_A} T_2\big\|}_A^{1/2}{\big\|S_1^{\sharp_A} S_1+S_2S_2^{\sharp_A}\big\|}_A^{1/2},
\end{align*}
for all $x\in \mathcal{H}$ with ${\|x\|}_A = 1$. By taking the supremum over all $x\in \mathcal{H}$ with ${\|x\|}_A = 1$ in the above inequality we obtain
\begin{align*}
\omega_A(T_1S_1 \pm S_2T_2) &\leq \sqrt{{\big\|T_1T_1^{\sharp_A} + T_2^{\sharp_A} T_2\big\|}_A}\sqrt{{\big\|S_1^{\sharp_A} S_1+S_2S_2^{\sharp_A}\big\|}_A},
\end{align*}
as desired.
\end{proof}

The following corollary is an immediate consequence of Theorem \ref{T.2.15} and generalizes the well-known result proved by Fong and Holbrook in \cite{fong}.
\begin{corollary}
Let $T, S\in\mathcal{B}_{A}(\mathcal{H})$. Then
\begin{align}\label{commu223}
\omega_A(TS \pm ST) &\leq 2\sqrt{2}\min\Big\{{\|T\|}_A \omega_A(S), {\|S\|}_A \omega_A(T)\Big\},
\end{align}
\end{corollary}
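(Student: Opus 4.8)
The plan is to obtain the Corollary as a direct specialization of Theorem \ref{T.2.15}. First I would set $T_1=T_2=T$ and $S_1=S_2=S$ in that theorem. With this choice one has $T_1S_1\pm S_2T_2=TS\pm ST$, and since the right-hand side of Theorem \ref{T.2.15} does not depend on the sign, both the commutator and the anticommutator are handled simultaneously. This yields at once
\[
\omega_A(TS\pm ST)\leq \sqrt{{\big\|TT^{\sharp_A}+T^{\sharp_A}T\big\|}_A}\;\sqrt{{\big\|S^{\sharp_A}S+SS^{\sharp_A}\big\|}_A}.
\]

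The second step is to estimate the two factors, and the key observation is that each factor admits \emph{two} different bounds. On the one hand, the triangle inequality together with \eqref{diez} gives ${\|TT^{\sharp_A}+T^{\sharp_A}T\|}_A\leq {\|TT^{\sharp_A}\|}_A+{\|T^{\sharp_A}T\|}_A=2{\|T\|}_A^2$, and likewise ${\|S^{\sharp_A}S+SS^{\sharp_A}\|}_A\leq 2{\|S\|}_A^2$. On the other hand, the first inequality in \eqref{feki1} (Theorem \ref{main1}) gives the numerical-radius bounds ${\|TT^{\sharp_A}+T^{\sharp_A}T\|}_A\leq 4\,\omega_A^2(T)$ and ${\|S^{\sharp_A}S+SS^{\sharp_A}\|}_A\leq 4\,\omega_A^2(S)$. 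The idea is then to pair these \emph{asymmetrically}: bounding the first factor by $2{\|T\|}_A^2$ and the second by $4\,\omega_A^2(S)$ produces
\[
\omega_A(TS\pm ST)\leq \sqrt{2{\|T\|}_A^2}\,\sqrt{4\,\omega_A^2(S)}=2\sqrt{2}\,{\|T\|}_A\,\omega_A(S),
\]
while the reverse pairing (first factor by $4\,\omega_A^2(T)$, second by $2{\|S\|}_A^2$) gives $\omega_A(TS\pm ST)\leq 2\sqrt{2}\,{\|S\|}_A\,\omega_A(T)$. Taking the minimum of the two estimates delivers \eqref{commu223}.

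I do not expect a genuine obstacle here, since everything is assembled from results already in hand; the only point requiring care is the asymmetric bookkeeping. One must resist the temptation to apply \eqref{feki1} to \emph{both} factors (which would give a worse constant with two copies of $\omega_A$ but no operator seminorm, and in fact a different and incomparable inequality $2\,\omega_A(T)\omega_A(S)$), and instead spend the sharp numerical-radius estimate on exactly one factor while using the crude seminorm estimate on the other. A secondary detail worth recording is that $\omega_A(\cdot)$ is a seminorm, so the $\pm$ ambiguity never interferes and no separate argument for the two signs is needed. Specializing to $A=I$ recovers the classical Fong--Holbrook inequality \cite{fong}, which is the intended generalization.
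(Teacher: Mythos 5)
Your proposal is correct and takes essentially the same route as the paper: specialize Theorem \ref{T.2.15} to $T_1=T_2=T$, $S_1=S_2=S$, bound one factor via the triangle inequality and \eqref{diez} by $\sqrt{2}\,\|T\|_A$ and the other via the first inequality in \eqref{feki1} by $2\,\omega_A(S)$, then exchange the roles of $T$ and $S$ and take the minimum (indeed, you cite \eqref{feki1} more accurately than the paper, whose proof refers to the ``second'' inequality while actually using the first, and writes squared norms inside a square root where none should appear). The only blemish is in your parenthetical aside: applying \eqref{feki1} to \emph{both} factors would give $4\,\omega_A(T)\,\omega_A(S)$, not $2\,\omega_A(T)\,\omega_A(S)$, but this side remark plays no role in your argument.
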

\begin{proof}
By letting $T_1=T_2=T$ and $S_1=S_2=S$ in Theorem \ref{T.2.15} and then using the second inequality in \eqref{feki1} we get
\begin{align*}
\omega_A(TS \pm ST)
&\leq \sqrt{{\big\|TT^{\sharp_A} + T^{\sharp_A} T\big\|}_A}\sqrt{{\big\|SS^{\sharp_A} + S^{\sharp_A} S\big\|}_A}\\
&\leq 2\sqrt{\big\|TT^{\sharp_A}\big\|_A^2+\big\|T^{\sharp_A} T\big\|_A^2}\,\omega_A(S)\\
&= 2\sqrt{\big\|T\big\|_A^2+\big\|T\big\|_A^2}\,\omega_A(S)\quad (\text{by }\,\eqref{diez}).
\end{align*}
Hence,
\begin{equation}\label{pp}
\omega_A(TS \pm ST)\leq 2\sqrt{2}\big\|T\big\|_A\,\omega_A(S).
\end{equation}
By replacing $T$ and $S$ by $S$ and $T$ respectively in \eqref{pp}, we get the desired result.
\end{proof}
Clearly \eqref{commu223} provides an upper bound for the $A$-numerical radius of the commutator $TS-ST$.

Next, we state the following result which is a natural generalization of another well-known theorem proved by Fong and Holbrook in \cite{fong}.
\begin{theorem}
Let $T, S\in\mathcal{B}_{A}(\mathcal{H})$. Then,
\begin{align*}
\omega_A(TS\pm ST^{\sharp_A}) \leq2\|T\|_A\,\omega_A(S).
\end{align*}
\end{theorem}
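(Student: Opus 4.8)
The plan is to reduce this inequality to a direct estimate on the $A$-numerical radius via the defining supremum, following the same Cauchy--Schwarz strategy that was used in the proof of Theorem \ref{T.2.15}. The key structural observation is that the expression $TS \pm ST^{\sharp_A}$ differs from the expression $T_1 S_1 \pm S_2 T_2$ treated in Theorem \ref{T.2.15} only in the placement of the sharp: here the second summand is $S T^{\sharp_A}$ rather than $S_2 T_2$. So first I would try to match it to the pattern of that theorem. Setting $T_1 = T$, $S_1 = S$, and in the second term $S_2 = S$, $T_2 = T^{\sharp_A}$, Theorem \ref{T.2.15} yields
\begin{align*}
\omega_A(TS \pm ST^{\sharp_A}) \leq \sqrt{{\big\|TT^{\sharp_A} + (T^{\sharp_A})^{\sharp_A} T^{\sharp_A}\big\|}_A}\,\sqrt{{\big\|S^{\sharp_A} S+SS^{\sharp_A}\big\|}_A}.
\end{align*}
The second radical is exactly $\sqrt{{\|S S^{\sharp_A} + S^{\sharp_A} S\|}_A}$, which by the second inequality in \eqref{feki1} is bounded by $\sqrt{2}\,\omega_A(S)$ after noting that $\|S S^{\sharp_A}+S^{\sharp_A}S\|_A \le 2\omega_A^2(S)$ would give the wrong constant; so I must handle the two radicals asymmetrically.

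The main work is therefore to control the first radical $\sqrt{{\|TT^{\sharp_A} + (T^{\sharp_A})^{\sharp_A} T^{\sharp_A}\|}_A}$ using only $\|T\|_A$, and to extract a clean factor of $\sqrt{2}$ from the second radical. For the first, the decisive simplification is that $(T^{\sharp_A})^{\sharp_A} T^{\sharp_A} = (T T^{\sharp_A})^{\sharp_A}$, so that $T T^{\sharp_A} + (T^{\sharp_A})^{\sharp_A} T^{\sharp_A} = T T^{\sharp_A} + (TT^{\sharp_A})^{\sharp_A} = 2\,\Re_A(TT^{\sharp_A})$; hence by the triangle inequality together with \eqref{diez} we get ${\|TT^{\sharp_A} + (T^{\sharp_A})^{\sharp_A}T^{\sharp_A}\|}_A \le \|TT^{\sharp_A}\|_A + \|(TT^{\sharp_A})^{\sharp_A}\|_A = 2\|TT^{\sharp_A}\|_A = 2\|T\|_A^2$. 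This gives the first radical as $\le \sqrt{2}\,\|T\|_A$. For the second radical I would instead apply \eqref{sousmultiplicative} and \eqref{diez} directly: ${\|S^{\sharp_A}S + S S^{\sharp_A}\|}_A \le 2\|S\|_A^2$ is too crude, so the right route is the second inequality of \eqref{feki1}, namely $\omega_A^2(S) \ge \tfrac12{\|S^{\sharp_A}S + SS^{\sharp_A}\|}_A$, which rearranges to $\sqrt{{\|S^{\sharp_A}S + SS^{\sharp_A}\|}_A} \le \sqrt{2}\,\omega_A(S)$. Multiplying the two bounds $\sqrt{2}\,\|T\|_A$ and $\sqrt{2}\,\omega_A(S)$ produces exactly $2\|T\|_A\,\omega_A(S)$, as claimed.

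The anticipated obstacle is purely bookkeeping around the sharp operation, and I would guard against it carefully. One must verify that $(T^{\sharp_A})^{\sharp_A}T^{\sharp_A}$ genuinely equals $(T T^{\sharp_A})^{\sharp_A}$; this is the anti-multiplicativity of $\sharp_A$ on $\mathcal{B}_A(\mathcal{H})$, valid because $T, T^{\sharp_A}\in\mathcal{B}_A(\mathcal{H})$, and it is the same identity already exploited in \eqref{newway00}. The only subtlety is that $(T^{\sharp_A})^{\sharp_A}$ is $P_{\overline{\mathcal{R}(A)}} T P_{\overline{\mathcal{R}(A)}}$ rather than $T$ itself; however this does not affect any of the seminorms or $\omega_A$ values above, since $\|X^{\sharp_A}\|_A = \|X\|_A$ and all quantities are computed on $\overline{\mathcal{R}(A)}$, so the computation goes through verbatim. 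Thus the proof is a short assembly: apply Theorem \ref{T.2.15} with the indicated choices, bound the $T$-radical by $\sqrt{2}\|T\|_A$ via the $\Re_A$ rewriting and \eqref{diez}, bound the $S$-radical by $\sqrt{2}\,\omega_A(S)$ via \eqref{feki1}, and multiply. I expect no genuine analytic difficulty beyond keeping the $\sharp_A$-manipulations correct.
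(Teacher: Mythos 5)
Your reduction to Theorem \ref{T.2.15} (with $T_1=T$, $S_1=S_2=S$, $T_2=T^{\sharp_A}$) and your treatment of the first radical are fine: indeed $(TT^{\sharp_A})^{\sharp_A}=(T^{\sharp_A})^{\sharp_A}T^{\sharp_A}$ and \eqref{diez} give $\|TT^{\sharp_A}+(T^{\sharp_A})^{\sharp_A}T^{\sharp_A}\|_A\le 2\|T\|_A^2$. The gap is in the second radical. You need $\|S^{\sharp_A}S+SS^{\sharp_A}\|_A\le 2\omega_A^2(S)$, and you justify it by quoting ``the second inequality of \eqref{feki1}'' in the form $\omega_A^2(S)\ge\tfrac{1}{2}\|S^{\sharp_A}S+SS^{\sharp_A}\|_A$. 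But that inequality runs the other way: \eqref{feki1} states $\omega_A^2(S)\le\tfrac{1}{2}\|S^{\sharp_A}S+SS^{\sharp_A}\|_A$, i.e. $2\omega_A^2(S)\le\|S^{\sharp_A}S+SS^{\sharp_A}\|_A$. The estimate you actually use is false in general: take $A=I$ and $S=\begin{pmatrix}0&1\\0&0\end{pmatrix}$; then $\|S^{*}S+SS^{*}\|=\|I\|=1$ while $2\omega^2(S)=\tfrac{1}{2}$. The only valid upper bound supplied by \eqref{feki1} is its first inequality, $\|S^{\sharp_A}S+SS^{\sharp_A}\|_A\le 4\omega_A^2(S)$, and with it your route produces $\sqrt{2}\,\|T\|_A\cdot 2\,\omega_A(S)=2\sqrt{2}\,\|T\|_A\,\omega_A(S)$, which is precisely the weaker constant of \eqref{commu223}, not the claimed $2$.

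This is not a bookkeeping slip that can be patched: any argument that treats $TS$ and $ST^{\sharp_A}$ as two unrelated products and applies the Cauchy--Schwarz splitting of Theorem \ref{T.2.15} pays a factor $\sqrt{2}$ on each side. The paper obtains the constant $2$ by exploiting the specific placement of $T^{\sharp_A}$: by Lemma \ref{lm2}, $\omega_A(TS+ST^{\sharp_A})=\sup_{\theta\in\mathbb{R}}\|\Re_A(e^{i\theta}[TS+ST^{\sharp_A}]^{\sharp_A})\|_A$, and a direct computation yields the identity $\Re_A(e^{i\theta}[TS+ST^{\sharp_A}]^{\sharp_A})=(T^{\sharp_A})^{\sharp_A}\,\Re_A(e^{i\theta}S^{\sharp_A})+\Re_A(e^{i\theta}S^{\sharp_A})\,T^{\sharp_A}$, in which the \emph{same} $A$-selfadjoint factor $\Re_A(e^{i\theta}S^{\sharp_A})$ appears in both summands. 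Then \eqref{sousmultiplicative}, \eqref{diez} and Lemma \ref{lm2} again give the bound $2\|T\|_A\,\omega_A(S^{\sharp_A})=2\|T\|_A\,\omega_A(S)$, and the minus case follows by replacing $T$ with $iT$. If you want to salvage a Cauchy--Schwarz-style argument, you must build this common-middle-factor structure into it; without that, $2\sqrt{2}$ is the best constant your approach can deliver.
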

\begin{proof}
We first prove that
\begin{align}\label{initial1}
\omega_A(TS+ ST^{\sharp_A}) \leq2\|T\|_A\,\omega_A(S).
\end{align}
In view of Lemma \ref{lm2}, we have
\begin{align}\label{777aaaaa}
\omega_A(TS+ ST^{\sharp_A})
& = \omega_A([TS+ ST^{\sharp_A}]^{\sharp_A})\nonumber\\
 &=\displaystyle{\sup_{\theta \in \mathbb{R}}}{\left\|\Re_A\left(e^{i\theta}[TS+ ST^{\sharp_A}]^{\sharp_A}\right)\right\|}_A.
\end{align}
Moreover, a short calculation reveals that
$$\Re_A\left(e^{i\theta}[TS+ ST^{\sharp_A}]^{\sharp_A}\right)= (T^{\sharp_A})^{\sharp_A} \left[\Re_A\left(e^{i\theta}S^{\sharp_A}\right)\right]+ \left[\Re_A\left(e^{i\theta}S^{\sharp_A}\right)\right]T^{\sharp_A},$$
for every $\theta \in \mathbb{R}$. So, by using \eqref{777aaaaa} together with Lemma \eqref{lm2} we get
\begin{align*}
\omega_A(TS+ ST^{\sharp_A})
 &=\displaystyle{\sup_{\theta \in \mathbb{R}}}\left\|(T^{\sharp_A})^{\sharp_A} \left[\Re_A\left(e^{i\theta}S^{\sharp_A}\right)\right]+ \left[\Re_A\left(e^{i\theta}S^{\sharp_A}\right)\right]T^{\sharp_A}\right\|_A\\
  &\leq\|(T^{\sharp_A})^{\sharp_A} \|_A\left(\displaystyle{\sup_{\theta \in \mathbb{R}}}\left\|\Re_A\left(e^{i\theta}S^{\sharp_A}\right)\right\|_A\right)+\|T^{\sharp_A}\|_A\left(\displaystyle{\sup_{\theta \in \mathbb{R}}}\left\|\Re_A\left(e^{i\theta}S^{\sharp_A}\right)\right\|_A\right)\\
    &=2\|T\|_A\,\omega_A(S^{\sharp_A})\;\; (\text{by } \eqref{diez})\\
        &=2\|T\|_A\,\omega_A(S).
\end{align*}
This proves \eqref{initial1}. By replacing $T$ by $iT$ in \eqref{initial1} we get
\begin{align}
\omega_A(TS-ST^{\sharp_A}) \leq2\|T\|_A\,\omega_A(S).
\end{align}
Therefore, we prove the desired result.
\end{proof}

Our next aim is to give an improvement of \eqref{kaisnew01}. In order to achieve this goal and other goals in the rest of this paper, we need the following lemmas.
\begin{lemma}$($\cite{acg3}$)$\label{lr1}
	Let $T\in \mathcal{B}(\mathcal{H})$. Then $T\in \mathcal{B}_{A^{1/2}}(\mathcal{H})$ if and only if there exists a unique $\widetilde{T}\in \mathcal{B}(\mathbf{R}(A^{1/2}))$ such that $Z_AT =\widetilde{T}Z_A$. Here, $Z_{A}: \mathcal{H} \rightarrow \mathbf{R}(A^{1/2})$ is
	defined by $Z_{A}x = Ax$.
\end{lemma}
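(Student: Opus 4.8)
The plan is to exploit the intertwining map $Z_A$, which is isometric from the seminormed space $(\mathcal{H},\|\cdot\|_A)$ into the genuine Hilbert space $\mathbf{R}(A^{1/2})$, and then to construct $\widetilde{T}$ by a density-and-continuity argument. The two structural facts I would establish first are: (i) $\|Z_A x\|_{\mathbf{R}(A^{1/2})}=\|x\|_A$ for every $x\in\mathcal{H}$, and (ii) the range $\mathcal{R}(Z_A)=\mathcal{R}(A)$ is dense in $\mathbf{R}(A^{1/2})$. For (i), writing $Z_A x=Ax=A^{1/2}(A^{1/2}x)$ and using $A^{1/2}x\in\overline{\mathcal{R}(A)}$, the definition of the de Branges--Rovnyak inner product gives
\begin{equation*}
\|Z_A x\|_{\mathbf{R}(A^{1/2})}^2=\|P_{\overline{\mathcal{R}(A)}}A^{1/2}x\|^2=\|A^{1/2}x\|^2=\langle Ax\mid x\rangle=\|x\|_A^2.
\end{equation*}
For (ii), given $A^{1/2}y\in\mathcal{R}(A^{1/2})$, I would pick $A^{1/2}x_n\to P_{\overline{\mathcal{R}(A)}}y$ in $\mathcal{H}$ (possible since $\mathcal{R}(A^{1/2})$ is dense in $\overline{\mathcal{R}(A)}$), so that $Ax_n\to A^{1/2}y$ in $\mathbf{R}(A^{1/2})$.

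The backward implication is then immediate: if $Z_A T=\widetilde{T}Z_A$ for some $\widetilde{T}\in\mathcal{B}(\mathbf{R}(A^{1/2}))$, then for all $x$,
\begin{equation*}
\|Tx\|_A=\|Z_A Tx\|_{\mathbf{R}(A^{1/2})}=\|\widetilde{T}Z_A x\|_{\mathbf{R}(A^{1/2})}\le\|\widetilde{T}\|\,\|Z_A x\|_{\mathbf{R}(A^{1/2})}=\|\widetilde{T}\|\,\|x\|_A,
\end{equation*}
which is exactly the inequality characterizing $\mathcal{B}_{A^{1/2}}(\mathcal{H})$ recalled in the introduction.

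For the forward implication, assume $T\in\mathcal{B}_{A^{1/2}}(\mathcal{H})$, say $\|Tx\|_A\le\lambda\|x\|_A$. I would define $\widetilde{T}$ on the dense subspace $\mathcal{R}(Z_A)$ by $\widetilde{T}(Z_A x):=Z_A(Tx)$. This is well defined and bounded by $\lambda$: if $Z_A x=Z_A x'$ then $\|x-x'\|_A=0$, whence $\|T(x-x')\|_A\le\lambda\|x-x'\|_A=0$ and $Z_A Tx=Z_A Tx'$; moreover $\|\widetilde{T}(Z_A x)\|_{\mathbf{R}(A^{1/2})}=\|Tx\|_A\le\lambda\|Z_A x\|_{\mathbf{R}(A^{1/2})}$. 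Since $\mathcal{R}(Z_A)$ is dense in $\mathbf{R}(A^{1/2})$, this bounded operator extends uniquely and continuously to $\widetilde{T}\in\mathcal{B}(\mathbf{R}(A^{1/2}))$ satisfying $Z_A T=\widetilde{T}Z_A$. Uniqueness follows from the same density: any operator obeying the intertwining relation is prescribed on $\mathcal{R}(Z_A)$ and hence agrees with $\widetilde{T}$ throughout.

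The only delicate point is the norm bookkeeping behind fact (i): one must invoke the de Branges--Rovnyak identification carefully so that the abstract Hilbert-space norm on $\mathbf{R}(A^{1/2})$ reduces to the concrete $A$-seminorm on $\mathcal{H}$. Once (i) and (ii) are secured, both implications and the uniqueness are routine continuity arguments on a dense subspace.
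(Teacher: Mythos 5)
This lemma is imported by the paper from Arias--Corach--Gonzalez \cite{acg3} and is stated there without proof, so there is no in-paper argument to compare yours against; judged on its own, your proof is correct, and it follows the standard route: $Z_A$ is isometric from $(\mathcal{H},\|\cdot\|_A)$ onto the dense subspace $\mathcal{R}(A)$ of $\mathbf{R}(A^{1/2})$, the backward implication is read off from the Douglas-type characterization of $\mathcal{B}_{A^{1/2}}(\mathcal{H})$, and $\widetilde{T}$ is produced (and is unique) by bounded extension from that dense subspace. Two facts you use tacitly deserve an explicit line, since both steps (i) and (ii) hinge on them: $\mathcal{N}(A^{1/2})=\mathcal{N}(A)$ and hence $\overline{\mathcal{R}(A^{1/2})}=\overline{\mathcal{R}(A)}$; this is exactly what guarantees $A^{1/2}x\in\overline{\mathcal{R}(A)}$ in your norm computation and the density of $\mathcal{R}(A^{1/2})$ in $\overline{\mathcal{R}(A)}$ used to approximate $P_{\overline{\mathcal{R}(A)}}y$. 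The same identity is also what converts $\|T(x-x')\|_A=0$ into $A\,T(x-x')=0$ (via $\|y\|_A=\|A^{1/2}y\|$, so $A^{1/2}T(x-x')=0$ and then $AT(x-x')=0$) in your well-definedness check; you clearly have all the pieces, but writing this chain out would make the argument fully self-contained.
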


\begin{lemma}$($\cite{feki01}$)$\label{lr2}
	Let $T\in \mathcal{B}_{A^{1/2}}(\mathcal{H})$. Then
	\begin{itemize}
		\item [(a)] $\|T\|_A=\|\widetilde{T}\|_{\mathcal{B}(\mathbf{R}(A^{1/2}))}$.
		\item [(b)] $\omega_A(T)=\omega(\widetilde{T})$.
	\end{itemize}
\end{lemma}

\begin{lemma}\label{lr3}$($\cite[Proposition 2.9]{majsecesuci}$)$
	Let $T\in \mathcal{B}_A(\mathcal{H})$. Then
	$$\widetilde{T^{\sharp_A}}=\big(\widetilde{T}\big)^*\;\text{ and }\; \widetilde{({T^{\sharp_A}})^{\sharp_A}}=\widetilde{T}.$$
\end{lemma}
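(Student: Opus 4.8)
The plan is to exploit the fact that $Z_A$ converts the $A$-semi-inner product into the genuine inner product of $\mathbf{R}(A^{1/2})$, and then read off both identities directly from the defining relation of the $A$-adjoint. The first thing I would record is the pull-back identity
$$\langle Z_A x, Z_A y\rangle_{\mathbf{R}(A^{1/2})} = \langle x\mid y\rangle_A, \qquad \forall\, x,y\in\mathcal{H}.$$
Indeed, $Z_A x = Ax = A^{1/2}(A^{1/2}x)$, so by the definition of the inner product on $\mathbf{R}(A^{1/2})$ we have $\langle Z_A x, Z_A y\rangle_{\mathbf{R}(A^{1/2})} = \langle P_{\overline{\mathcal{R}(A)}}(A^{1/2}x)\mid P_{\overline{\mathcal{R}(A)}}(A^{1/2}y)\rangle$; since $A^{1/2}x, A^{1/2}y\in\mathcal{R}(A^{1/2})\subseteq\overline{\mathcal{R}(A)}$, the projections act as the identity and the right-hand side collapses to $\langle A^{1/2}x\mid A^{1/2}y\rangle = \langle Ax\mid y\rangle = \langle x\mid y\rangle_A$.

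Next I would note that $\mathcal{R}(Z_A)=\mathcal{R}(A)$ is dense in $\mathbf{R}(A^{1/2})$: the space $\mathbf{R}(A^{1/2})$ is isometric to $\overline{\mathcal{R}(A)}$ via $A^{1/2}z\mapsto P_{\overline{\mathcal{R}(A)}}z$, and under this identification $\mathcal{R}(A)$ corresponds to the dense subspace $\mathcal{R}(A^{1/2})$. Consequently a bounded operator on $\mathbf{R}(A^{1/2})$ is determined by its action on vectors of the form $Z_A x$. Now fix $x,y\in\mathcal{H}$. Using the intertwining relations $Z_A T^{\sharp_A} = \widetilde{T^{\sharp_A}}\,Z_A$ and $Z_A T = \widetilde{T}\,Z_A$ of Lemma \ref{lr1}, together with the pull-back identity, I compute
$$\langle \widetilde{T^{\sharp_A}}\,Z_A x, Z_A y\rangle_{\mathbf{R}(A^{1/2})} = \langle Z_A(T^{\sharp_A}x), Z_A y\rangle_{\mathbf{R}(A^{1/2})} = \langle T^{\sharp_A}x\mid y\rangle_A,$$
$$\langle Z_A x, \widetilde{T}\,Z_A y\rangle_{\mathbf{R}(A^{1/2})} = \langle Z_A x, Z_A(Ty)\rangle_{\mathbf{R}(A^{1/2})} = \langle x\mid Ty\rangle_A.$$
The defining property $\langle Tu\mid v\rangle_A = \langle u\mid T^{\sharp_A}v\rangle_A$ together with the conjugate symmetry of $\langle\cdot\mid\cdot\rangle_A$ yields $\langle T^{\sharp_A}x\mid y\rangle_A = \langle x\mid Ty\rangle_A$, so the two displays coincide. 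By the density noted above, this forces $\langle \widetilde{T^{\sharp_A}}u, v\rangle_{\mathbf{R}(A^{1/2})} = \langle u, \widetilde{T}v\rangle_{\mathbf{R}(A^{1/2})}$ for all $u,v\in\mathbf{R}(A^{1/2})$, that is, $\widetilde{T^{\sharp_A}} = \big(\widetilde{T}\big)^*$.

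For the second identity I would simply iterate: since $T^{\sharp_A}\in\mathcal{B}_A(\mathcal{H})$, applying the first identity with $T$ replaced by $T^{\sharp_A}$ and then taking adjoints in $\mathbf{R}(A^{1/2})$ gives
$$\widetilde{(T^{\sharp_A})^{\sharp_A}} = \big(\widetilde{T^{\sharp_A}}\big)^* = \big((\widetilde{T})^*\big)^* = \widetilde{T}.$$
The only genuinely delicate point is the combination of the pull-back identity with the density statement; everything afterward is formal manipulation of the intertwining relation. Once the $\mathbf{R}(A^{1/2})$-inner product is identified with the $A$-semi-inner product on $\mathcal{R}(Z_A)$ and this range is seen to be dense, the adjoint formula is essentially forced and the second identity comes for free from the first.
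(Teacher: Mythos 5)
The paper does not prove this lemma at all: it is imported verbatim as \cite[Proposition 2.9]{majsecesuci}, so there is no internal proof to compare yours against. Judged on its own, your argument is correct and self-contained. The pull-back identity $\langle Z_Ax, Z_Ay\rangle_{\mathbf{R}(A^{1/2})}=\langle x\mid y\rangle_A$ is verified properly (the key point, which you state, is $\mathcal{R}(A^{1/2})\subseteq\overline{\mathcal{R}(A)}$, so the projections in the defining formula act trivially on $A^{1/2}x$); the density of $\mathcal{R}(Z_A)=\mathcal{R}(A)$ in $\mathbf{R}(A^{1/2})$ via the unitary identification $A^{1/2}z\mapsto P_{\overline{\mathcal{R}(A)}}z$ with $\overline{\mathcal{R}(A)}$ is sound; and the passage from the sesquilinear identity on the dense range to the operator identity $\widetilde{T^{\sharp_A}}=(\widetilde{T})^*$ is legitimate because $\widetilde{T^{\sharp_A}}$ and $\widetilde{T}$ are bounded on $\mathbf{R}(A^{1/2})$, so both sides are continuous in each variable. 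Two points you use implicitly deserve a word in a polished write-up: the existence of $\widetilde{T^{\sharp_A}}$ requires $T^{\sharp_A}\in\mathcal{B}_{A^{1/2}}(\mathcal{H})$, which holds because $T^{\sharp_A}\in\mathcal{B}_A(\mathcal{H})\subseteq\mathcal{B}_{A^{1/2}}(\mathcal{H})$ (stated in Section \ref{s1}); and the relation $\langle T^{\sharp_A}x\mid y\rangle_A=\langle x\mid Ty\rangle_A$ follows from the defining identity $\langle Tu\mid v\rangle_A=\langle u\mid T^{\sharp_A}v\rangle_A$ by conjugate symmetry, exactly as you say. Your iteration for the second identity is also the right move, since the first identity applies to $T^{\sharp_A}$ itself. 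What your proof buys is that the paper becomes self-contained on this point instead of outsourcing it to the literature; it is, in essence, the natural argument one would expect behind the cited proposition.
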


Now, we are ready to prove the following proposition.
\begin{proposition}\label{immmmm}
Let $T\in \mathcal{B}_A(\mathcal{H})$. Then,
\begin{equation}\label{omprovenew}
2\|T^2\|_A\leq \|T^{\sharp_A} T+TT^{\sharp_A}\|_A\leq \|T^2\|_A+\|T\|_A^2.
\end{equation}
\end{proposition}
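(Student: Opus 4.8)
The plan is to transfer the whole statement to the genuine Hilbert space $\mathbf{R}(A^{1/2})$ and settle it there by a positive block-matrix argument. First I would attach to $T\in\mathcal{B}_A(\mathcal{H})\subseteq\mathcal{B}_{A^{1/2}}(\mathcal{H})$ the operator $R:=\widetilde{T}\in\mathcal{B}(\mathbf{R}(A^{1/2}))$ provided by Lemma \ref{lr1}. The assignment $S\mapsto\widetilde{S}$ is linear and multiplicative: from $Z_A(S_1S_2)=\widetilde{S_1}Z_AS_2=\widetilde{S_1}\widetilde{S_2}Z_A$ together with the uniqueness in Lemma \ref{lr1} one gets $\widetilde{S_1S_2}=\widetilde{S_1}\widetilde{S_2}$, while Lemma \ref{lr3} gives $\widetilde{T^{\sharp_A}}=R^*$. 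Combining these with the norm identity of Lemma \ref{lr2}(a) yields
\[
\|T^2\|_A=\|R^2\|,\quad \|T\|_A^2=\|R\|^2,\quad \|T^{\sharp_A}T+TT^{\sharp_A}\|_A=\|R^*R+RR^*\|.
\]
Hence \eqref{omprovenew} is equivalent to the purely Hilbertian double inequality $2\|R^2\|\le\|R^*R+RR^*\|\le\|R^2\|+\|R\|^2$, which I would now prove for $R\in\mathcal{B}(\mathbf{R}(A^{1/2}))$.

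The key device is the column operator $Y:=\begin{pmatrix}R\\ R^*\end{pmatrix}$, viewed as a bounded operator from $\mathbf{R}(A^{1/2})$ into $\mathbf{R}(A^{1/2})\oplus\mathbf{R}(A^{1/2})$. A direct computation gives $Y^*Y=R^*R+RR^*$ and $YY^*=\begin{pmatrix}RR^* & R^2\\ R^{*2} & R^*R\end{pmatrix}$, and the $C^*$-identity $\|Y^*Y\|=\|YY^*\|$ yields the central equality
\[
\|R^*R+RR^*\|=\left\|\begin{pmatrix}RR^* & R^2\\ R^{*2} & R^*R\end{pmatrix}\right\|.
\]
For the upper bound I would split the block matrix on the right as the sum of its diagonal part $\mathrm{diag}(RR^*,R^*R)$ and its off-diagonal part, whose norms are $\max\{\|RR^*\|,\|R^*R\|\}=\|R\|^2$ and $\|R^2\|$ respectively; the triangle inequality then gives $\|R^*R+RR^*\|\le\|R\|^2+\|R^2\|$, the right half of \eqref{omprovenew}.

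For the lower bound I would exploit that $M:=YY^*\ge 0$, reducing the claim to the general fact that a positive block matrix $M=\begin{pmatrix}P & Z\\ Z^* & Q\end{pmatrix}$ satisfies $\|M\|\ge 2\|Z\|$. To prove this, I fix unit vectors $u,v$ and test $M$ against the normalized combination $w=\alpha(u,0)+\beta(0,v)$ with $|\alpha|=|\beta|=\tfrac{1}{\sqrt2}$ and a phase chosen so that $2\Re\!\big(\bar\alpha\beta\langle Zv,u\rangle\big)=|\langle Zv,u\rangle|$; this produces $\|M\|\ge\langle Mw,w\rangle=\tfrac12\langle Pu,u\rangle+\tfrac12\langle Qv,v\rangle+|\langle Zv,u\rangle|$. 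Positivity of the $2\times2$ compression of $M$ to $\mathrm{span}\{(u,0),(0,v)\}$ forces $|\langle Zv,u\rangle|\le\sqrt{\langle Pu,u\rangle\langle Qv,v\rangle}\le\tfrac12\big(\langle Pu,u\rangle+\langle Qv,v\rangle\big)$, so the first two terms already dominate $|\langle Zv,u\rangle|$; taking the supremum over unit $u,v$ gives $\|M\|\ge 2\|Z\|$. With $Z=R^2$ this reads $\|R^*R+RR^*\|\ge 2\|R^2\|$.

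The routine parts are the transfer step and the triangle-inequality upper bound; the one substantive point is the lower bound $\|M\|\ge 2\|Z\|$ for positive block matrices, where the gain of the factor $2$ comes from combining the phase-optimized test vector with the Cauchy--Schwarz/AM--GM constraint forced by positivity of the $2\times2$ compression. I expect this to be the main obstacle, since the naive estimate only yields $\|M\|\ge\|Z\|$ and loses the crucial factor of two.
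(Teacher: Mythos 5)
Your proof is correct, and after the transfer step (which is identical to the paper's: multiplicativity and additivity of $S\mapsto\widetilde{S}$ via uniqueness in Lemma \ref{lr1}, plus Lemmas \ref{lr2} and \ref{lr3}) it takes a genuinely different route through the Hilbert-space inequality $2\|R^2\|\le\|R^*R+RR^*\|\le\|R^2\|+\|R\|^2$. The paper handles both halves by citation: for the upper bound it writes $\widetilde{T}^*\widetilde{T}+\widetilde{T}\widetilde{T}^*$ as a product of row and column operators built from $|\widetilde{T}|$ and $|\widetilde{T}^*|$, uses $r(XY)=r(YX)$ to pass to the positive block matrix $\begin{pmatrix}\widetilde{T}^*\widetilde{T} & |\widetilde{T}|\,|\widetilde{T}^*| \\ |\widetilde{T}^*|\,|\widetilde{T}| & \widetilde{T}\widetilde{T}^*\end{pmatrix}$, and then invokes the Hou--Du theorem \cite[Theorem 1.1.]{hou} comparing the spectral radius of a positive block operator matrix with that of its matrix of norms (together with $\|\,|\widetilde{T}|\,|\widetilde{T}^*|\,\|=\|\widetilde{T}^2\|$); for the lower bound it simply quotes Kittaneh's lemma \cite[Lemma 7]{FK2002}. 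You replace both citations with self-contained arguments: the $C^*$-identity applied to the column operator $Y=\begin{pmatrix}R\\ R^*\end{pmatrix}$ yields the analogous block identity $\|R^*R+RR^*\|=\left\|\begin{pmatrix}RR^* & R^2\\ R^{*2} & R^*R\end{pmatrix}\right\|$ (with off-diagonal entry $R^2$ rather than $|\widetilde{T}|\,|\widetilde{T}^*|$), the upper bound then follows from the triangle inequality and the standard norm formulas for diagonal and off-diagonal block matrices, and the lower bound follows from your claim that any positive $M=\begin{pmatrix}P & Z\\ Z^* & Q\end{pmatrix}$ satisfies $\|M\|\ge 2\|Z\|$, which you prove correctly: the phase-optimized unit vector gives $\|M\|\ge\tfrac12\langle Pu\mid u\rangle+\tfrac12\langle Qv\mid v\rangle+|\langle Zv\mid u\rangle|$, and positivity of the $2\times2$ compression gives $|\langle Zv\mid u\rangle|\le\tfrac12\big(\langle Pu\mid u\rangle+\langle Qv\mid v\rangle\big)$, whence $\|M\|\ge 2\sup_{u,v}|\langle Zv\mid u\rangle|=2\|Z\|$. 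In effect you re-prove Kittaneh's lemma rather than cite it; the paper's route buys brevity by outsourcing the two operator-matrix facts, while yours buys a fully elementary and self-contained argument at the cost of a little extra work.
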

\begin{proof}
Firstly, we shall prove that for every $X,Y\in \mathcal{B}_{A^{1/2}}(\mathcal{H})$ we have
\begin{equation}\label{prosum}
\widetilde{XY}=\widetilde{X}\widetilde{Y}\;\text{ and }\; \widetilde{X+Y}=\widetilde{X}+\widetilde{Y}.
\end{equation}
Since, $X,Y\in \mathcal{B}_{A^{1/2}}(\mathcal{H})$, then by Lemma \ref{lr1} there exists a unique $\widetilde{X},\widetilde{Y}\in \mathcal{B}(\mathbf{R}(A^{1/2}))$ such that $Z_AX =\widetilde{X}Z_A$ and $Z_AY=\widetilde{Y}Z_A$. So,
$$Z_A(XY) =\widetilde{X}Z_AY=\widetilde{X}\widetilde{Y}Z_A.$$
On the other hand, since $XY\in \mathcal{B}_{A^{1/2}}(\mathcal{H})$, then again in view of Lemma \ref{lr1} we have $Z_A(XY) =(\widetilde{XY})Z_A$. Thus, since $\widetilde{XY}$ is unique, then we conclude that $\widetilde{XY}=\widetilde{X}\widetilde{Y}$. Similarly, we can prove that $\widetilde{X+Y}=\widetilde{X}+\widetilde{Y}$.

Now, by using Lemma \ref{lr1} (a) together with \eqref{prosum} and Lemma \ref{lr3} we get
\begin{align}\label{firr}
\|T^{\sharp_A} T+TT^{\sharp_A}\|_A
& =\|\widetilde{T^{\sharp_A} T+TT^{\sharp_A}}\|_{\mathcal{B}(\mathbf{R}(A^{1/2}))} \nonumber\\
 &=\|(\widetilde{T})^{*}\widetilde{T}+\widetilde{T}(\widetilde{T})^{*}\|_{\mathcal{B}(\mathbf{R}(A^{1/2}))}.
\end{align}
Moreover, by using basic properties of the spectral radius of Hilbert space operators, we see that
\begin{align*}
\|(\widetilde{T})^{*}\widetilde{T}+\widetilde{T}(\widetilde{T})^{*}\|_{\mathcal{B}(\mathbf{R}(A^{1/2}))}
& =r\left((\widetilde{T})^{*}\widetilde{T}+\widetilde{T}(\widetilde{T})^{*}\right)\\
 &=r\left[\begin{pmatrix}
(\widetilde{T})^{*}\widetilde{T}+\widetilde{T}(\widetilde{T})^{*}&0 \\
0&0
\end{pmatrix}\right]\\
&=r\left[\begin{pmatrix}
|\widetilde{T}|&|(\widetilde{T})^{*}| \\
0&0
\end{pmatrix}
\begin{pmatrix}
|\widetilde{T}|&0 \\
|(\widetilde{T})^{*}|&0
\end{pmatrix}
\right]\\
&=r\left[
\begin{pmatrix}
|\widetilde{T}|&0 \\
|(\widetilde{T})^{*}|&0
\end{pmatrix}
\begin{pmatrix}
|\widetilde{T}|&|(\widetilde{T})^{*}|\\
0&0
\end{pmatrix}
\right]
\end{align*}
Hence, we get
\begin{align*}
\|(\widetilde{T})^{*}\widetilde{T}+\widetilde{T}(\widetilde{T})^{*}\|_{\mathcal{B}(\mathbf{R}(A^{1/2}))}=r\left[
\begin{pmatrix}
(\widetilde{T})^{*}\widetilde{T}& |\widetilde{T}|\,|(\widetilde{T})^{*}|\\
|(\widetilde{T})^{*}|\,|\widetilde{T}|&\widetilde{T}(\widetilde{T})^{*}
\end{pmatrix}
\right]
\end{align*}
Thus, by using \cite[Theorem 1.1.]{hou} we obtain
\begin{align*}
\|(\widetilde{T})^{*}\widetilde{T}+\widetilde{T}(\widetilde{T})^{*}\|_{\mathcal{B}(\mathbf{R}(A^{1/2}))}
&\leq r\left[
\begin{pmatrix}
\|(\widetilde{T})^{*}\widetilde{T}\|_{\mathcal{B}(\mathbf{R}(A^{1/2}))}& \|\,|\widetilde{T}|\,|(\widetilde{T})^{*}|\,\|_{\mathcal{B}(\mathbf{R}(A^{1/2}))}\\
\|\,|(\widetilde{T})^{*}|\,|\widetilde{T}|\,\|_{\mathcal{B}(\mathbf{R}(A^{1/2}))}&\|\widetilde{T}(\widetilde{T})^{*}\|_{\mathcal{B}(\mathbf{R}(A^{1/2}))}
\end{pmatrix}
\right]\\
&= r\left[
\begin{pmatrix}
\|\widetilde{T}\|_{\mathcal{B}(\mathbf{R}(A^{1/2}))}^2& \|(\widetilde{T})^2\|_{\mathcal{B}(\mathbf{R}(A^{1/2}))}\\
\|(\widetilde{T})^2\|_{\mathcal{B}(\mathbf{R}(A^{1/2}))}&\|\widetilde{T}\|_{\mathcal{B}(\mathbf{R}(A^{1/2}))}^2
\end{pmatrix}
\right]\\
&=\|\widetilde{T}\|_{\mathcal{B}(\mathbf{R}(A^{1/2}))}^2+\|(\widetilde{T})^2\|_{\mathcal{B}(\mathbf{R}(A^{1/2}))}.
\end{align*}
So, by taking into account \eqref{firr} we get
\begin{align*}
\|T^{\sharp_A} T+TT^{\sharp_A}\|_A
&\leq \|\widetilde{T}\|_{\mathcal{B}(\mathbf{R}(A^{1/2}))}^2+\|(\widetilde{T})^2\|_{\mathcal{B}(\mathbf{R}(A^{1/2}))}\\
 &=\|\widetilde{T}\|_{\mathcal{B}(\mathbf{R}(A^{1/2}))}^2+\|\widetilde{T^2}\|_{\mathcal{B}(\mathbf{R}(A^{1/2}))}\quad (\text{by }\,\eqref{prosum})\\
 &=\|T\|_A^2+\|T^2\|_A\quad(\text{by Lemma } \ref{lr2} (b)).
\end{align*}
Now, by \cite[Lemma 7]{FK2002} we have
$$
2\|(\widetilde{T})^2\|_{\mathcal{B}(\mathbf{R}(A^{1/2}))}\leq \|(\widetilde{T})^{*}\widetilde{T}+\widetilde{T}(\widetilde{T})^{*}\|_{\mathcal{B}(\mathbf{R}(A^{1/2}))}.
$$
This implies, by using \eqref{prosum} together with Lemmas \ref{lr3} and \ref{lr2} (b), that
$$2\|T^2\|_A\leq\|T^{\sharp_A} T+TT^{\sharp_A}\|_A.$$
Hence, the proof is finished.
\end{proof}
Now, we state the following two corollaries. The first one give a considerable improvement of \eqref{kaisnew01}.
\begin{corollary}
Let $T\in\mathcal{B}_{A}(\mathcal{H})$. Then
\begin{align*}
\omega_A(T) \leq \frac{1}{2}\sqrt{\|T^2\|_A+\|T\|_A^2 + 2\omega_A(T^2)}\leq\frac{1}{2}\left(\|T\|_A+\|T^2\|_A^{1/2}\right).
\end{align*}
\end{corollary}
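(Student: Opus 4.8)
The plan is to chain together two previously established results: the improved upper bound for $\omega_A(T)$ from Corollary \ref{corr2020}, together with the right-hand inequality in Proposition \ref{immmmm} governing $\|T^{\sharp_A}T + TT^{\sharp_A}\|_A$. First I would recall from Corollary \ref{corr2020} that
\begin{equation*}
\omega_A(T) \leq \frac{1}{2}\sqrt{{\big\|TT^{\sharp_A} + T^{\sharp_A} T\big\|}_A + 2\omega_A(T^2)},
\end{equation*}
which immediately supplies an upper bound involving the quantity $\|T^{\sharp_A}T + TT^{\sharp_A}\|_A$. The first inequality in the corollary then follows by substituting the upper estimate $\|T^{\sharp_A}T + TT^{\sharp_A}\|_A \leq \|T^2\|_A + \|T\|_A^2$ from Proposition \ref{immmmm} directly into the radicand, giving
\begin{equation*}
\omega_A(T) \leq \frac{1}{2}\sqrt{\|T^2\|_A + \|T\|_A^2 + 2\omega_A(T^2)}.
\end{equation*}

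For the second inequality, the idea is to bound $\omega_A(T^2)$ from above by $\|T^2\|_A$ using the right-hand side of \eqref{refine1}, so that the radicand is controlled by $\|T^2\|_A + \|T\|_A^2 + 2\|T^2\|_A = \|T\|_A^2 + 3\|T^2\|_A$; however, this does not directly produce the clean target $\frac{1}{2}(\|T\|_A + \|T^2\|_A^{1/2})$, so a more careful comparison is needed. The cleaner route is to observe that the target right-hand side squares to $\frac{1}{4}(\|T\|_A^2 + 2\|T\|_A\|T^2\|_A^{1/2} + \|T^2\|_A)$, so it suffices to verify that
\begin{equation*}
\|T^2\|_A + \|T\|_A^2 + 2\omega_A(T^2) \leq \|T\|_A^2 + 2\|T\|_A\|T^2\|_A^{1/2} + \|T^2\|_A,
\end{equation*}
which reduces to the inequality $\omega_A(T^2) \leq \|T\|_A \|T^2\|_A^{1/2}$.

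The step I expect to be the crux is establishing this last inequality $\omega_A(T^2) \leq \|T\|_A\|T^2\|_A^{1/2}$. The natural approach is to write $\omega_A(T^2) \leq \|T^2\|_A$ from the second inequality in \eqref{refine1}, and then factor $\|T^2\|_A = \|T^2\|_A^{1/2}\cdot\|T^2\|_A^{1/2}$; since $\|T^2\|_A \leq \|T\|_A^2$ by the submultiplicativity \eqref{sousmultiplicative}, we have $\|T^2\|_A^{1/2} \leq \|T\|_A$, and therefore $\|T^2\|_A = \|T^2\|_A^{1/2}\|T^2\|_A^{1/2} \leq \|T\|_A\|T^2\|_A^{1/2}$. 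Combining $\omega_A(T^2) \leq \|T^2\|_A \leq \|T\|_A\|T^2\|_A^{1/2}$ closes the gap and yields the desired second inequality after taking square roots.

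Putting these pieces together, the full chain reads
\begin{align*}
\omega_A(T)
&\leq \frac{1}{2}\sqrt{{\big\|TT^{\sharp_A} + T^{\sharp_A} T\big\|}_A + 2\omega_A(T^2)}\quad(\text{by Corollary }\ref{corr2020})\\
&\leq \frac{1}{2}\sqrt{\|T^2\|_A + \|T\|_A^2 + 2\omega_A(T^2)}\quad(\text{by }\eqref{omprovenew})\\
&\leq \frac{1}{2}\sqrt{\|T^2\|_A + \|T\|_A^2 + 2\|T\|_A\|T^2\|_A^{1/2}}\\
&= \frac{1}{2}\left(\|T\|_A+\|T^2\|_A^{1/2}\right),
\end{align*}
where the third step uses $\omega_A(T^2) \leq \|T\|_A\|T^2\|_A^{1/2}$ and the final equality is the factorization of a perfect square. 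No genuinely new machinery is required beyond the already-cited Corollary \ref{corr2020}, Proposition \ref{immmmm}, and the basic seminorm estimates \eqref{refine1} and \eqref{sousmultiplicative}, so the only delicate point is arranging the elementary estimates to land exactly on the stated right-hand side.
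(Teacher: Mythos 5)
Your proposal is correct and follows essentially the same route as the paper's own proof: apply Corollary \ref{corr2020}, replace $\|TT^{\sharp_A}+T^{\sharp_A}T\|_A$ by $\|T^2\|_A+\|T\|_A^2$ via Proposition \ref{immmmm}, bound $\omega_A(T^2)\leq\|T^2\|_A\leq\|T\|_A\|T^2\|_A^{1/2}$ using \eqref{refine1} and \eqref{sousmultiplicative}, and finish by factoring the perfect square. The only cosmetic difference is that the paper records the estimate $\|T^2\|_A\leq\|T\|_A\|T^2\|_A^{1/2}$ as a separate preliminary display and inserts $2\omega_A(T^2)\leq 2\|T^2\|_A$ as an explicit intermediate line, whereas you collapse these into one step.
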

\begin{proof}
Observe first that, in view of \eqref{sousmultiplicative}, we have
\begin{equation}\label{aself12020}
\|T^2\|_A=\|T^2\|_A^{1/2}\|T^2\|_A^{1/2}\leq \|T\|_A\|T^2\|_A^{1/2}.
\end{equation}
Moreover, by Corollary \ref{corr2020}, one has
\begin{align*}
\omega_A(T)
&\leq \frac{1}{2}\sqrt{{\big\|TT^{\sharp_A} + T^{\sharp_A} T\big\|}_A + 2\omega_A(T^2)}\\
&\leq \frac{1}{2}\sqrt{\|T^2\|_A+\|T\|_A^2 + 2\omega_A(T^2)}\quad (\text{by }\,\eqref{omprovenew})\\
&\leq \frac{1}{2}\sqrt{\|T^2\|_A+\|T\|_A^2 + 2\|T^2\|_A}\quad (\text{by }\,\eqref{refine1})\\
&\leq \frac{1}{2}\sqrt{\|T\|_A^2 + 2\|T\|_A\|T^2\|_A^{1/2}+\|T^2\|_A}\quad (\text{by }\,\eqref{sousmultiplicative}\text{ and }\eqref{aself12020})\\
&=\frac{1}{2}\sqrt{\left(\|T\|_A+\|T^2\|_A^{1/2}\right)^2}\\
&=\frac{1}{2}\left(\|T\|_A+\|T^2\|_A^{1/2}\right).
\end{align*}
This completes the proof.
\end{proof}

\begin{corollary}
Let $T\in\mathcal{B}_{A}(\mathcal{H})$. Then the following assertions hold:
\begin{itemize}
  \item [(1)] $\|T\|_A=\frac{\sqrt{2}}{2}\sqrt{{\big\|T T^{\sharp_A} + T^{\sharp_A} T\big\|}_A}$ if and only if $\|T^2\|_A=\|T\|_A^2$.
  \item [(2)] If $AT^2=0$, then $\|T\|_A=\sqrt{{\big\|T T^{\sharp_A} + T^{\sharp_A} T\big\|}_A}$.
\end{itemize}
\end{corollary}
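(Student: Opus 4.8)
The plan is to read off both assertions directly from the two-sided estimate \eqref{omprovenew} of Proposition \ref{immmmm}, combined with the elementary bounds \eqref{refine1} and \eqref{sousmultiplicative} and, for the second part, the sharper upper bound furnished by Corollary \ref{corr2020}. The guiding observation is that \eqref{omprovenew} sandwiches $\|T^{\sharp_A}T + TT^{\sharp_A}\|_A$ between $2\|T^2\|_A$ and $\|T^2\|_A + \|T\|_A^2$, so that any equality statement becomes forced the moment one of these two endpoints is pinned down. Throughout I would also use the basic fact recorded in the introduction that, for $X \in \mathcal{B}_{A^{1/2}}(\mathcal{H})$, one has $\|X\|_A = 0$ if and only if $AX = 0$, applied to $X = T^2$.

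For assertion (1), I would first square the claimed identity so as to rephrase it as $\|T^{\sharp_A}T + TT^{\sharp_A}\|_A = 2\|T\|_A^2$, and then treat the two implications separately. If $\|T^2\|_A = \|T\|_A^2$, substituting this into \eqref{omprovenew} collapses both endpoints to the common value $2\|T\|_A^2$, which immediately forces $\|T^{\sharp_A}T + TT^{\sharp_A}\|_A = 2\|T\|_A^2$. Conversely, if $\|T^{\sharp_A}T + TT^{\sharp_A}\|_A = 2\|T\|_A^2$, then the upper bound in \eqref{omprovenew} yields $\|T\|_A^2 \le \|T^2\|_A$, while \eqref{sousmultiplicative} always gives $\|T^2\|_A \le \|T\|_A^2$; together these produce the desired equality $\|T^2\|_A = \|T\|_A^2$. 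This part is routine bookkeeping and I expect no genuine obstacle here.

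For assertion (2), the hypothesis $AT^2 = 0$ is exactly $\|T^2\|_A = 0$, which by \eqref{refine1} also forces $\omega_A(T^2) = 0$ (and hence the Crawford term $c_A(T^2)$ vanishes too). The delicate step, which I expect to be the crux, is establishing the lower bound $\|T\|_A^2 \le \|T^{\sharp_A}T + TT^{\sharp_A}\|_A$, since all the purely numerical-radius estimates run in the opposite, unhelpful direction. Here I would invoke the \emph{upper} bound in Corollary \ref{corr2020}: with $\omega_A(T^2) = 0$ it reads $\omega_A(T) \le \tfrac12\sqrt{\|T^{\sharp_A}T + TT^{\sharp_A}\|_A}$, that is $4\omega_A^2(T) \le \|T^{\sharp_A}T + TT^{\sharp_A}\|_A$. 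Chaining this with $\|T\|_A^2 \le 4\omega_A^2(T)$ coming from \eqref{refine1} gives precisely the sought lower bound. Finally, the upper bound in \eqref{omprovenew} with $\|T^2\|_A = 0$ yields $\|T^{\sharp_A}T + TT^{\sharp_A}\|_A \le \|T\|_A^2$, and the two inequalities combine to the claimed equality $\|T\|_A = \sqrt{\|T^{\sharp_A}T + TT^{\sharp_A}\|_A}$. The essential ingredient is thus the refinement in Corollary \ref{corr2020}: the coarser bound \eqref{feki1} of Theorem \ref{main1} alone would only deliver $\tfrac12\|T\|_A^2 \le \|T^{\sharp_A}T + TT^{\sharp_A}\|_A$, which is too weak to close the argument.
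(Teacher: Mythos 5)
Your proposal is correct, and part (1) coincides with the paper's own argument: both reduce the equivalence to the two-sided estimate \eqref{omprovenew} of Proposition \ref{immmmm} together with the submultiplicativity bound $\|T^2\|_A\le\|T\|_A^2$ from \eqref{sousmultiplicative}. Part (2), however, takes a genuinely different route to the crucial lower bound $\|T\|_A^2\le\|T^{\sharp_A}T+TT^{\sharp_A}\|_A$. You obtain it \emph{conditionally}, by feeding $\omega_A(T^2)=0$ into the refined upper bound of Corollary \ref{corr2020} and chaining with $\|T\|_A\le 2\,\omega_A(T)$ from \eqref{refine1}; your observation that the coarser inequality \eqref{feki1} only yields $\tfrac12\|T\|_A^2\le\|T^{\sharp_A}T+TT^{\sharp_A}\|_A$, and is therefore too weak, is accurate and pinpoints exactly why the refinement is needed on your route. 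The paper instead proves the lower bound \emph{unconditionally} via an $A$-positivity argument: since $(T^{\sharp_A}T+TT^{\sharp_A})-TT^{\sharp_A}=T^{\sharp_A}T\geq_A 0$, the identity \eqref{aself1} (equality of seminorm and $A$-numerical radius for $A$-selfadjoint operators) together with \eqref{diez} gives
\begin{equation*}
\|T\|_A^2=\|TT^{\sharp_A}\|_A=\omega_A(TT^{\sharp_A})\le\omega_A(T^{\sharp_A}T+TT^{\sharp_A})=\|T^{\sharp_A}T+TT^{\sharp_A}\|_A ,
\end{equation*}
valid for every $T\in\mathcal{B}_A(\mathcal{H})$ with no hypothesis on $T^2$; the hypothesis $AT^2=0$ then enters only through the upper bound $\|T^{\sharp_A}T+TT^{\sharp_A}\|_A\le\|T^2\|_A+\|T\|_A^2=\|T\|_A^2$ from Proposition \ref{immmmm}, exactly as in your closing step. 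What each approach buys: the paper's positivity argument is more structural and produces an inequality of independent interest, while yours stays entirely inside the numerical-radius machinery already developed (Corollary \ref{corr2020} and \eqref{refine1}) and needs no monotonicity considerations for $\geq_A$. Both are complete and correct.
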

\begin{proof}
\noindent (1)\;By Proposition \ref{immmmm} we have
\begin{equation*}
2\|T\|_A^2\leq \|T^{\sharp_A} T+TT^{\sharp_A}\|_A\leq \|T^2\|_A+\|T\|_A^2\leq 2\|T\|_A^2.
\end{equation*}
So, the desired property follows.
\par \vskip 0.1 cm \noindent (2)\;Clearly, we have $(T^{\sharp_A} T+TT^{\sharp_A})-TT^{\sharp_A}=T^{\sharp_A} T\geq_A0$. Hence, by using \eqref{aself1} together with \eqref{diez} and Proposition \ref{immmmm} we obtain
\begin{equation*}
\|T\|_A^2\leq \|T^{\sharp_A} T+TT^{\sharp_A}\|_A\leq \|T^2\|_A+\|T\|_A^2.
\end{equation*}
So, if $AT^2=$ then $\|T^2\|_A=0$. Therefore, the required assertion follows immediately.
\end{proof}

By using the connection between $A$-bounded operators and operators in $\mathcal{B}(\mathbf{R}(A^{1/2}))$ we will generalize some well-known numerical radius inequalities. We begin with the following result which generalizes \cite[Theorem 3.3.]{zamani2} since $\mathcal{B}_{A}(\mathcal{H})\subseteq\mathcal{B}_{A^{1/2}}(\mathcal{H})$.

\begin{theorem}\label{newdooo}
Let $T\in \mathcal{B}_{A^{1/2}}(\mathcal{H})$. Then,
\begin{align}\label{eqnew1.5}
\omega_A^2(T) \le \tfrac{1}{2}\left[ {\|T\|_A^2 + \omega_A\big( {T^2} \big)} \right].
\end{align}
\end{theorem}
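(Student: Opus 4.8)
The plan is to transfer the inequality to the genuine Hilbert space $\mathbf{R}(A^{1/2})$ and invoke the corresponding classical estimate there, exactly as in the proof of Proposition~\ref{immmmm}. Since $T\in\mathcal{B}_{A^{1/2}}(\mathcal{H})$, Lemma~\ref{lr1} produces a unique $\widetilde{T}\in\mathcal{B}(\mathbf{R}(A^{1/2}))$ with $Z_AT=\widetilde{T}Z_A$, and Lemma~\ref{lr2} records the two crucial identities $\|T\|_A=\|\widetilde{T}\|_{\mathcal{B}(\mathbf{R}(A^{1/2}))}$ and $\omega_A(T)=\omega(\widetilde{T})$. Thus the left-hand side of \eqref{eqnew1.5} and the norm term on the right are immediately expressible in terms of the single Hilbert space operator $\widetilde{T}$.

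First I would handle the term $\omega_A(T^2)$, which is the only nonlinear ingredient. Because $\mathcal{B}_{A^{1/2}}(\mathcal{H})$ is a subalgebra of $\mathcal{B}(\mathcal{H})$, we have $T^2\in\mathcal{B}_{A^{1/2}}(\mathcal{H})$, so $\omega_A(T^2)=\omega(\widetilde{T^2})$ by Lemma~\ref{lr2}(b); and the multiplicativity established in \eqref{prosum} gives $\widetilde{T^2}=(\widetilde{T})^2$. Combining these yields
\begin{equation*}
\omega_A(T^2)=\omega\big((\widetilde{T})^2\big),
\end{equation*}
so all three quantities appearing in \eqref{eqnew1.5} are the corresponding numerical-radius and norm quantities of $\widetilde{T}$ on $\mathbf{R}(A^{1/2})$.

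It now suffices to apply the classical numerical radius inequality $\omega^2(S)\le\tfrac12\big(\|S\|^2+\omega(S^2)\big)$, valid for every bounded operator $S$ on a Hilbert space (this is the special case $A=I$ of \cite[Theorem 3.3]{zamani2}), to $S=\widetilde{T}\in\mathcal{B}(\mathbf{R}(A^{1/2}))$; here I use that $\mathbf{R}(A^{1/2})$ is an honest Hilbert space by the de Branges--Rovnyak construction recalled in Section~\ref{s1}. Substituting the three translations above turns $\omega^2(\widetilde{T})\le\tfrac12\big(\|\widetilde{T}\|^2+\omega((\widetilde{T})^2)\big)$ into exactly \eqref{eqnew1.5}.

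The argument is a pure transfer, so there is no serious analytic obstacle; the only point that must be verified with care is that the correspondence $T\mapsto\widetilde{T}$ is compatible with the map $T\mapsto T^2$, and this is precisely what the multiplicativity relation $\widetilde{T^2}=(\widetilde{T})^2$ from \eqref{prosum} guarantees. Everything else is a direct reading-off through Lemmas~\ref{lr1} and~\ref{lr2}, and the generalization relative to \cite[Theorem 3.3]{zamani2} is obtained for free because these lemmas already hold on the larger class $\mathcal{B}_{A^{1/2}}(\mathcal{H})$.
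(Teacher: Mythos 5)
Your proof is correct and follows essentially the same route as the paper: both transfer the statement to the Hilbert space $\mathbf{R}(A^{1/2})$ via Lemmas~\ref{lr1} and~\ref{lr2}, use the identity $\widetilde{T^2}=(\widetilde{T})^2$ from \eqref{prosum}, and invoke the classical inequality $\omega^2(S)\le\tfrac12\left(\|S\|^2+\omega(S^2)\right)$ on that space (the paper cites it as Dragomir's result \cite[Theorem 1]{D4}, which is the same inequality you attribute to the $A=I$ case of \cite[Theorem 3.3]{zamani2}). Your write-up is in fact slightly more careful, since you make explicit the step $\omega_A(T^2)=\omega\big((\widetilde{T})^2\big)$ that the paper uses tacitly.
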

\begin{proof}
By applying Lemma \ref{lr2} and \cite[Theorem1.]{D4} we see that
\begin{align*}
\omega_A^2(T)
& = \omega^2(\widetilde{T}) \\
 &\le \frac{1}{2}\left[ {\|\widetilde{T}\|_{\mathcal{B}(\mathbf{R}(A^{1/2}))}^2 + \omega\big( {(\widetilde{T})^2} \big)} \right]\\
  &=\frac{1}{2}\left[ {\|\widetilde{T}\|_{\mathcal{B}(\mathbf{R}(A^{1/2}))}^2 + \omega\big( {\widetilde{T^2}} \big)} \right].
\end{align*}
So, we get \eqref{eqnew1.5} by applying Lemma \ref{lr2}.
\end{proof}
\begin{remark}
For a given $T\in \mathcal{B}_{A^{1/2}}(\mathcal{H})$, it follows from Theorem \ref{newdooo}, \eqref{refine1} and \eqref{sousmultiplicative} that
$$\omega_A(T) \le \left\{\tfrac{1}{2}\left[ {\|T\|_A^2 + \omega_A\big( {T^2} \big)} \right]\right\}^{1/2}\leq \left\{\tfrac{1}{2}\left[ {\|T\|_A^2 + \|T\|_A^2} \right]\right\}^{1/2}\leq \|T\|_A.$$
Hence, \eqref{eqnew1.5} refines the second inequality in \eqref{refine1}.
\end{remark}
Next, we state the following theorem.
\begin{theorem}\label{prr2}
Let $U\in\mathcal{B}_A(\mathcal{H})$ be an $A$-unitary operator and $T\in \mathcal{B}_{A^{1/2}}(\mathcal{H})$ be such that $UT=TU$. Then,
 \begin{equation}\label{hooknew02}
\omega_A(UT)\leq \omega_A(T).
 \end{equation}
If $U$ is an $A$-isometry operator which commutes with an operator $T\in \mathcal{B}_{A^{1/2}}(\mathcal{H})$, then \eqref{hooknew02} also holds true.
\end{theorem}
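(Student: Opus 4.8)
The plan is to transport the whole statement to the Hilbert space $\mathbf{R}(A^{1/2})$ via the correspondence $T\mapsto\widetilde{T}$ of Lemmas \ref{lr1}, \ref{lr2} and \ref{lr3}, and then to invoke the classical numerical radius inequality for an operator commuting with a unitary (respectively, an isometry). First I would record that, since $U\in\mathcal{B}_A(\mathcal{H})\subseteq\mathcal{B}_{A^{1/2}}(\mathcal{H})$ and $T\in\mathcal{B}_{A^{1/2}}(\mathcal{H})$, both $\widetilde{U}$ and $\widetilde{T}$ are well-defined elements of $\mathcal{B}(\mathbf{R}(A^{1/2}))$, and that the multiplicativity relation $\widetilde{XY}=\widetilde{X}\,\widetilde{Y}$ established in \eqref{prosum} gives $\widetilde{UT}=\widetilde{U}\,\widetilde{T}$. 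In particular, the hypothesis $UT=TU$ immediately yields $\widetilde{U}\,\widetilde{T}=\widetilde{T}\,\widetilde{U}$, while Lemma \ref{lr2}(b) gives $\omega_A(UT)=\omega(\widetilde{UT})=\omega(\widetilde{U}\,\widetilde{T})$ and $\omega_A(T)=\omega(\widetilde{T})$. Thus \eqref{hooknew02} is equivalent to the Hilbert space inequality $\omega(\widetilde{U}\,\widetilde{T})\le\omega(\widetilde{T})$.

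The key intermediate step is to identify the nature of $\widetilde{U}$. Recalling that $Z_Ax=Ax$ satisfies $\|Z_Ax\|_{\mathbf{R}(A^{1/2})}=\|x\|_A$ for every $x\in\mathcal{H}$ (which follows at once from the de Branges--Rovnyak inner product, since $A^{1/2}x\in\overline{\mathcal{R}(A)}$), and that $Z_AU=\widetilde{U}Z_A$ by Lemma \ref{lr1}, for an $A$-isometry $U$ I would compute, for all $x\in\mathcal{H}$,
\[
\|\widetilde{U}Z_Ax\|_{\mathbf{R}(A^{1/2})}=\|Z_AUx\|_{\mathbf{R}(A^{1/2})}=\|Ux\|_A=\|x\|_A=\|Z_Ax\|_{\mathbf{R}(A^{1/2})}.
\]
Since $\mathcal{R}(Z_A)=\mathcal{R}(A)$ is dense in $\mathbf{R}(A^{1/2})$ and $\widetilde{U}$ is bounded, this shows that $\widetilde{U}$ is an isometry on $\mathbf{R}(A^{1/2})$. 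When $U$ is $A$-unitary, the operator $U^{\sharp_A}$ is also an $A$-isometry, so the same argument applied to $U^{\sharp_A}$, together with the identity $\widetilde{U^{\sharp_A}}=(\widetilde{U})^{*}$ from Lemma \ref{lr3}, shows that $(\widetilde{U})^{*}$ is an isometry as well; hence $\widetilde{U}$ is unitary.

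It then remains to apply the classical fact that, on a Hilbert space, $\omega(VS)\le\omega(S)$ whenever $S$ commutes with a unitary (respectively, an isometry) $V$; taking $V=\widetilde{U}$ and $S=\widetilde{T}$ gives $\omega(\widetilde{U}\widetilde{T})\le\omega(\widetilde{T})$, which is exactly \eqref{hooknew02} after translating back through Lemma \ref{lr2}(b). The main obstacle is precisely this last input. For the unitary case it can be proved from scratch: by Fuglede's theorem $(\widetilde{U})^{*}$ also commutes with $\widetilde{T}$, so the spectral projections of $\widetilde{U}$ commute with $\widetilde{T}$; approximating $\widetilde{U}$ by finite spectral sums $\sum_k\lambda_kE_k$ with $|\lambda_k|=1$ and mutually orthogonal projections $E_k$ commuting with $\widetilde{T}$, one gets $\langle\widetilde{U}\widetilde{T}x,x\rangle\approx\sum_k\lambda_k\langle\widetilde{T}E_kx,E_kx\rangle$, whence $|\langle\widetilde{U}\widetilde{T}x,x\rangle|\le\omega(\widetilde{T})\sum_k\|E_kx\|^2=\omega(\widetilde{T})\|x\|^2$ on unit vectors. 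The genuinely delicate part is the isometry case, where $\widetilde{U}$ need not be normal and Fuglede is unavailable; here one must instead rely on a Wold decomposition / minimal unitary dilation argument (or simply cite the corresponding known numerical radius inequality for an operator commuting with an isometry) to reduce it to the unitary situation.
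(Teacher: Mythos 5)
Your proposal is correct and takes essentially the same route as the paper: reduce everything to $\mathbf{R}(A^{1/2})$ via the correspondence $T\mapsto\widetilde{T}$ (Lemmas \ref{lr1}, \ref{lr2}, \ref{lr3} and \eqref{prosum}), show that $\widetilde{U}$ is unitary (resp.\ an isometry), and invoke the classical numerical radius inequality for an operator commuting with a unitary/isometry, which the paper simply cites as \cite[Theorem 1.9]{D.1} in both cases rather than reproving. The only divergence is in the sub-step identifying $\widetilde{U}$: you argue metrically from $\|Z_Ax\|_{\mathbf{R}(A^{1/2})}=\|x\|_A$ and the density of $\mathcal{R}(Z_A)$, whereas the paper argues algebraically from the characterization $U^{\sharp_A}U=(U^{\sharp_A})^{\sharp_A}U^{\sharp_A}=P_{\overline{\mathcal{R}(A)}}$ together with $\widetilde{P_{\overline{\mathcal{R}(A)}}}=I_{\mathbf{R}(A^{1/2})}$ and Lemma \ref{lr3}; both verifications are valid.
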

\begin{proof}
Notice first that, it was proved in \cite{acg1}, that an operator $U\in \mathcal{B}_A(\mathcal{H})$ is $A$-unitary if and only if
$$U^{\sharp_A} U=(U^{\sharp_A})^{\sharp_A} U^{\sharp_A}=P_{\overline{\mathcal{R}(A)}}.$$
This implies that
$$\widetilde{U^{\sharp_A} U}=\widetilde{(U^{\sharp_A})^{\sharp_A} U^{\sharp_A}}=\widetilde{P_{\overline{\mathcal{R}(A)}}}.$$
On the other hand, it can be seen that $\widetilde{P_{\overline{\mathcal{R}(A)}}}=I_{\mathbf{R}(A^{1/2}}$. So, by using Lemma \ref{lr3} we get
$$\widetilde{U}^* \widetilde{U}=\widetilde{U} \widetilde{U}^*=I_{\mathbf{R}(A^{1/2}}.$$
So, $\widetilde{U}$ is an unitary operator on the Hilbert space $\mathbf{R}(A^{1/2})$. Moreover, since $UT=TU$, then $\widetilde{U}\widetilde{T}=\widetilde{T}\widetilde{U}$. Thus, by applying \cite[Theorem 1.9.]{D.1}, we obtain
 \begin{equation*}
\omega(\widetilde{U}\widetilde{T})\leq \omega(\widetilde{T}).
 \end{equation*}
This proves \eqref{hooknew02} by observing that $\widetilde{U}\widetilde{T}=\widetilde{UT}$ and using Lemma \ref{lr2} (b). Now, let $U\in \mathcal{B}_A(\mathcal{H})$ is an $A$-isometry operator. Then, by \cite[Corollary 3.7.]{acg1}, we have $U^{\sharp_A} U=P_{\overline{\mathcal{R}(A)}}.$ By using similar arguments as above, one can see that $\widetilde{U}\in \mathcal{B}(\mathbf{R}(A^{1/2}))$ is an isometry operator. So, the proof of the theorem is complete by proceeding as above and using \cite[Theorem 1.9.]{D.1}.
\end{proof}

Our next result reads as follows.
\begin{theorem}\label{doublecomm}
Let $T\in \mathcal{B}_A(\mathcal{H})$ and $S\in  \mathcal{B}_{A^{1/2}}(\mathcal{H})$ be such that $TS=ST$ and $T^{\sharp_A} S=ST^{\sharp_A} $. Then,
 \begin{equation}\label{hooknew02222}
\omega_A(TS)\leq \omega_A(S)\|T\|_A,
 \end{equation}
 and
  \begin{equation*}\label{hooknew02222+}
\omega_A(ST)\leq \omega_A(T)\|S\|_A.
 \end{equation*}
\end{theorem}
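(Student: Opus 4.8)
The plan is to transport everything into the Hilbert space $\mathbf{R}(A^{1/2})$ through the correspondence $X \mapsto \widetilde{X}$, invoke a classical numerical radius inequality for doubly commuting Hilbert space operators, and pull the conclusion back, exactly in the spirit of the proofs of \Cref{newdooo,prr2}. Since $T \in \mathcal{B}_A(\mathcal{H}) \subseteq \mathcal{B}_{A^{1/2}}(\mathcal{H})$ and $S \in \mathcal{B}_{A^{1/2}}(\mathcal{H})$, \Cref{lr1} furnishes the operators $\widetilde{T},\widetilde{S} \in \mathcal{B}(\mathbf{R}(A^{1/2}))$, while \eqref{prosum} makes $X \mapsto \widetilde{X}$ multiplicative on $\mathcal{B}_{A^{1/2}}(\mathcal{H})$; in particular $\widetilde{TS}=\widetilde{T}\,\widetilde{S}$ and $\widetilde{ST}=\widetilde{S}\,\widetilde{T}$. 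Using \Cref{lr3} to write $\widetilde{T^{\sharp_A}}=(\widetilde{T})^*$, the two hypotheses $TS=ST$ and $T^{\sharp_A}S=ST^{\sharp_A}$ translate, respectively, into $\widetilde{T}\,\widetilde{S}=\widetilde{S}\,\widetilde{T}$ and $(\widetilde{T})^*\widetilde{S}=\widetilde{S}\,(\widetilde{T})^*$.

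Next I would record the adjoint consequences inside $\mathbf{R}(A^{1/2})$. Taking Hilbert-space adjoints of these two identities shows that $\widetilde{S}^*$ commutes with both $\widetilde{T}$ and $(\widetilde{T})^*$, equivalently that $\widetilde{T}$ commutes with both $\widetilde{S}$ and $\widetilde{S}^*$. Hence $\widetilde{S}$ doubly commutes with $\widetilde{T}$ and, symmetrically, $\widetilde{T}$ doubly commutes with $\widetilde{S}$. It is worth emphasizing that, although $S$ need not admit an $A$-adjoint (it lies only in $\mathcal{B}_{A^{1/2}}(\mathcal{H})$, so $S^{\sharp_A}$ may fail to exist), its transfer $\widetilde{S}$ is a genuine bounded operator on the Hilbert space $\mathbf{R}(A^{1/2})$ and therefore possesses an honest adjoint $\widetilde{S}^*$ there; this is precisely what makes the symmetric argument legitimate without assuming $S \in \mathcal{B}_A(\mathcal{H})$.

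The analytic core is the classical fact that, for bounded Hilbert space operators $N$ and $X$ with $X$ commuting with both $N$ and $N^*$, one has $\omega(NX) \le \|N\|\,\omega(X)$. Applying this with $(N,X)=(\widetilde{T},\widetilde{S})$ gives $\omega(\widetilde{T}\,\widetilde{S}) \le \|\widetilde{T}\|\,\omega(\widetilde{S})$, and applying it with $(N,X)=(\widetilde{S},\widetilde{T})$ gives $\omega(\widetilde{S}\,\widetilde{T}) \le \|\widetilde{S}\|\,\omega(\widetilde{T})$. Transferring back via \Cref{lr2} and using $\widetilde{TS}=\widetilde{T}\,\widetilde{S}$ and $\widetilde{ST}=\widetilde{S}\,\widetilde{T}$ then yields $\omega_A(TS)=\omega(\widetilde{T}\,\widetilde{S})\le \|\widetilde{T}\|\,\omega(\widetilde{S})=\|T\|_A\,\omega_A(S)$ and, likewise, $\omega_A(ST)\le \|S\|_A\,\omega_A(T)$, which are the two asserted inequalities.

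The main obstacle is the doubly commuting inequality $\omega(NX)\le\|N\|\,\omega(X)$ itself, which is the only substantial ingredient; everything else is the by-now routine dictionary between $\mathcal{B}_{A^{1/2}}(\mathcal{H})$ and $\mathcal{B}(\mathbf{R}(A^{1/2}))$. For \emph{normal} $N$ this is the classical spectral-theorem argument: since $X$ commutes with the spectral measure $E$ of $N$, one estimates $|\langle NX\xi\mid\xi\rangle| = |\int \lambda\, d\langle XE(\cdot)\xi\mid\xi\rangle|$ against $\|N\|\,\omega(X)$ using that each slice $\langle XE(\Delta)\xi\mid\xi\rangle$ is controlled by $\omega(X)\|E(\Delta)\xi\|^2$. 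The genuine content is that normality can be dropped once $X$ is assumed to commute with $N^*$ as well; this doubly commuting statement I would quote from the literature on numerical radius (in the same spirit as the unitary commuting inequality \cite[Theorem 1.9.]{D.1} already used in the proof of \Cref{prr2}), since a self-contained derivation requires more than the naive triangle-inequality bound $\omega(NX)\le 2\|N\|\,\omega(X)$ that the decomposition $\Re_A(e^{i\theta}N)$, $\Im_A(e^{i\theta}N)$ produces and must instead exploit the full commutation of $X$ with the von Neumann algebra generated by $N$.
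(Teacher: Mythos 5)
Your proposal is correct and follows essentially the same route as the paper: transport $T$ and $S$ to $\mathbf{R}(A^{1/2})$ via Lemmas \ref{lr1}, \ref{lr3} and \eqref{prosum}, translate the hypotheses into $\widetilde{T}\widetilde{S}=\widetilde{S}\widetilde{T}$ and $(\widetilde{T})^*\widetilde{S}=\widetilde{S}(\widetilde{T})^*$, invoke the classical doubly-commuting bound $\omega(NX)\le\|N\|\,\omega(X)$ (the paper cites it as \cite[Theorem 1.10]{D.1}), and pull back with Lemma \ref{lr2}. Your only additions are making explicit, by taking Hilbert-space adjoints, why the hypotheses suffice for the symmetric application $(N,X)=(\widetilde{S},\widetilde{T})$, and the correct observation that $\widetilde{S}$ has an adjoint in $\mathcal{B}(\mathbf{R}(A^{1/2}))$ even though $S^{\sharp_A}$ need not exist---details the paper leaves implicit.
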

\begin{proof}
Since $TS=ST$ and $T^{\sharp_A} S=ST^{\sharp_A} $, then by the same arguments used in the proof of Theorem \ref{prr2}, we infer that
$$\widetilde{T}\widetilde{S}=\widetilde{S}\widetilde{T}\;\text{ and }\; \widetilde{T}^* \widetilde{S}=\widetilde{S}\widetilde{T}^*.$$
So, an application of \cite[Theorem 1.10]{D.1} shows that
$$\omega(\widetilde{T}\widetilde{S})\leq \omega(\widetilde{S})\|\widetilde{T}\|_{\mathcal{B}(\mathbf{R}(A^{1/2}))}\;\text{ and } \; \omega(\widetilde{S}\widetilde{T})\leq \omega(\widetilde{T})\|\widetilde{S}\|_{\mathcal{B}(\mathbf{R}(A^{1/2}))}.$$
Therefore, the desired results by applying \eqref{prosum} together with Lemma \ref{lr2}.
\end{proof}

\begin{corollary}
Let $T,S\in \mathcal{B}_A(\mathcal{H})$ be such that $T$ is an $A$-isometry operator. Assume that $TS=ST$ and $T^{\sharp_A} S=ST^{\sharp_A} $. Then,
 \begin{equation*}
\omega_A(TS)\leq \omega_A(S).
 \end{equation*}
\end{corollary}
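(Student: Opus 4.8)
The plan is to derive this corollary directly from Theorem \ref{doublecomm} by checking that the hypotheses specialize correctly when $T$ is an $A$-isometry. First I would recall from \cite[Corollary 3.7.]{acg1} that an $A$-isometry $T\in \mathcal{B}_A(\mathcal{H})$ satisfies $T^{\sharp_A} T=P_{\overline{\mathcal{R}(A)}}$, which is exactly the relation that was used in the proof of Theorem \ref{prr2} to show that the induced operator $\widetilde{T}$ on $\mathbf{R}(A^{1/2})$ is an isometry; consequently $\|\widetilde{T}\|_{\mathcal{B}(\mathbf{R}(A^{1/2}))}=1$, and by Lemma \ref{lr2} (a) this gives $\|T\|_A=1$.

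Next I would simply invoke Theorem \ref{doublecomm}. Since $T\in \mathcal{B}_A(\mathcal{H})\subseteq \mathcal{B}_{A^{1/2}}(\mathcal{H})$ and $S\in \mathcal{B}_A(\mathcal{H})$, and since we are given both commutation relations $TS=ST$ and $T^{\sharp_A} S=ST^{\sharp_A}$, the first inequality of that theorem applies verbatim and yields
\begin{equation*}
\omega_A(TS)\leq \omega_A(S)\|T\|_A.
\end{equation*}
Substituting $\|T\|_A=1$ then gives $\omega_A(TS)\leq \omega_A(S)$, which is the claim.

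In short, the proof is a one-line application of Theorem \ref{doublecomm} once the normalization $\|T\|_A=1$ for an $A$-isometry is established. I do not anticipate any genuine obstacle here: the only point requiring a moment's care is confirming that an $A$-isometry does indeed have unit $A$-operator seminorm, which follows cleanly from the characterization $T^{\sharp_A} T=P_{\overline{\mathcal{R}(A)}}$ (equivalently, from $\|Tx\|_A=\|x\|_A$ for all $x$, which forces $\|T\|_A=\sup_{\|x\|_A=1}\|Tx\|_A=1$ whenever $A\neq 0$). Everything else is an immediate specialization, so no new estimate or technical lemma is needed beyond what the excerpt already provides.
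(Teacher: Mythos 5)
Your proposal is correct and follows exactly the paper's own route: establish the normalization $\|T\|_A=1$ for an $A$-isometry (the paper simply asserts this as clear, from $\|Tx\|_A=\|x\|_A$ for all $x$) and then apply Theorem \ref{doublecomm} to get $\omega_A(TS)\leq \omega_A(S)\|T\|_A=\omega_A(S)$. The extra detail you supply via $T^{\sharp_A}T=P_{\overline{\mathcal{R}(A)}}$ and Lemma \ref{lr2} is a harmless elaboration of the same step.
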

\begin{proof}
Since $T$ is an $A$-isometry operator, then clearly we have $\|T\|_A=1$. Therefore, we get the desired result by applying Theorem \ref{doublecomm}
\end{proof}

We end this paper by the following theorem.
\begin{theorem}
Let $T\in\mathcal{B}_A(\mathcal{H})$ be an $A$-normal operator such that $TS=ST$. Then,
 \begin{equation}\label{hook02000}
\omega_A(TS)\leq \omega_A(T)\omega_A(S).
 \end{equation}
\end{theorem}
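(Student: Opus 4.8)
The plan is to transport the problem to the Hilbert space $\mathbf{R}(A^{1/2})$ through the lifting $X\mapsto\widetilde{X}$ and then invoke the classical numerical radius estimate for a normal operator commuting with another operator. First I would record that $\widetilde{T}$ is a normal operator on $\mathbf{R}(A^{1/2})$: since $T$ is $A$-normal we have $T^{\sharp_A}T=TT^{\sharp_A}$, and applying \eqref{prosum} together with Lemma \ref{lr3} gives $(\widetilde{T})^{*}\widetilde{T}=\widetilde{T}(\widetilde{T})^{*}$. In the same way, from $TS=ST$ and the multiplicativity in \eqref{prosum} I would obtain $\widetilde{T}\widetilde{S}=\widetilde{S}\widetilde{T}$, so that the two lifted operators commute. (Here $S$ is taken in $\mathcal{B}_{A^{1/2}}(\mathcal{H})$, so that $\widetilde{S}$ is well defined and $\omega_A(S)$ is finite.)

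The crucial step comes next. Because $\widetilde{T}$ is normal and commutes with $\widetilde{S}$, the classical Fuglede theorem on $\mathbf{R}(A^{1/2})$ yields that the adjoint $(\widetilde{T})^{*}$ also commutes with $\widetilde{S}$, that is $(\widetilde{T})^{*}\widetilde{S}=\widetilde{S}(\widetilde{T})^{*}$. With both $\widetilde{T}$ and $(\widetilde{T})^{*}$ commuting with $\widetilde{S}$, I can invoke the commuting-operator inequality \cite[Theorem 1.10]{D.1} exactly as in the proof of Theorem \ref{doublecomm} to get $\omega(\widetilde{T}\widetilde{S})\leq\omega(\widetilde{S})\|\widetilde{T}\|_{\mathcal{B}(\mathbf{R}(A^{1/2}))}$. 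Since $\widetilde{T}$ is normal, $\|\widetilde{T}\|_{\mathcal{B}(\mathbf{R}(A^{1/2}))}=\omega(\widetilde{T})$, and therefore $\omega(\widetilde{T}\widetilde{S})\leq\omega(\widetilde{T})\,\omega(\widetilde{S})$.

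Finally I would translate back to $\mathcal{H}$. Using $\widetilde{T}\widetilde{S}=\widetilde{TS}$ from \eqref{prosum} together with Lemma \ref{lr2}(b), one has $\omega_A(TS)=\omega(\widetilde{TS})=\omega(\widetilde{T}\widetilde{S})$, while $\omega(\widetilde{T})=\omega_A(T)$ and $\omega(\widetilde{S})=\omega_A(S)$. Combining these identities with the inequality of the previous paragraph produces precisely \eqref{hook02000}.

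The main obstacle is the passage from ``$T$ commutes with $S$'' to ``$T^{\sharp_A}$ commutes with $S$,'' which is not part of the hypotheses and must be furnished by Fuglede's theorem; everything else is routine bookkeeping with the lifting lemmas. An alternative route that sidesteps re-deriving this would be to prove an $A$-analogue of Fuglede's theorem directly in $\mathcal{H}$, conclude $T^{\sharp_A}S=ST^{\sharp_A}$, and then apply Theorem \ref{doublecomm} together with the identity $\|T\|_A=\omega_A(T)$ valid for $A$-normal operators (\cite[Theorem 4]{feki01}); however, routing the whole argument through $\mathbf{R}(A^{1/2})$ keeps it consistent with the earlier proofs in this section.
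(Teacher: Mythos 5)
Your proposal is correct and follows essentially the same route as the paper's own proof: lift everything to $\mathbf{R}(A^{1/2})$ via \eqref{prosum} and Lemmas \ref{lr2}--\ref{lr3}, use Fuglede's theorem to upgrade $\widetilde{T}\widetilde{S}=\widetilde{S}\widetilde{T}$ to double commutation, apply \cite[Theorem 1.10]{D.1}, and finish with $\|\widetilde{T}\|_{\mathcal{B}(\mathbf{R}(A^{1/2}))}=\omega(\widetilde{T})$ for normal $\widetilde{T}$ before translating back. Your explicit remark that $S$ must be taken in $\mathcal{B}_{A^{1/2}}(\mathcal{H})$ for $\widetilde{S}$ and $\omega_A(S)$ to make sense is a welcome precision, since the theorem's statement leaves $S$ unquantified.
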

\begin{proof}
Let $T\in \mathcal{B}_A(\mathcal{H})$ is an $A$-normal. It is not difficult to see that $\widetilde{T}\in \mathcal{B}(\mathbf{R}(A^{1/2}))$ is a normal operator. So, since $T$ is an $A$-normal operator and satisfies $TS=ST$, then $\widetilde{T}$ is a normal operator on $\mathbf{R}(A^{1/2})$ and satisfies $\widetilde{T}\widetilde{S}=\widetilde{S}\widetilde{T}$. Therefore, by Feglede's theorem \cite{fuglede} we deduce that $\widetilde{T}^*\widetilde{S}=\widetilde{S}\widetilde{T}^*$. This implies, by taking adjoints, that $\widetilde{T}\widetilde{S}^*=\widetilde{S}^*\widetilde{T}$. Hence, by applying \cite[Theorem 1.10]{D.1} we get
$$\omega(\widetilde{T}\widetilde{S})\leq \omega(\widetilde{S})\|\widetilde{T}\|_{\mathcal{B}(\mathbf{R}(A^{1/2}))}.$$
On the other hand, since $\widetilde{T}$ is a normal operator in $\mathcal{B}(\mathbf{R}(A^{1/2}))$, then  $\omega(\widetilde{T})=\|\widetilde{T}\|_{\mathcal{B}(\mathbf{R}(A^{1/2}))}$ (see \cite{bakfeki02}). So, we deduce that
$$\omega(\widetilde{T}\widetilde{S})\leq \omega(\widetilde{S})\omega(\widetilde{T}).$$
Hence, by using \eqref{prosum} and Lemma \ref{lr2} (b), we obtain
$$\omega_A(TS)\leq \omega_A(S)\omega_A(S),$$
as required.
\end{proof}


\end{document}

\documentclass[12pt,reqno]{amsart}
\usepackage{etoolbox}

   \makeatletter

 \patchcmd{\@setaddresses}{\scshape\ignorespaces}{\ignorespaces}{}{} 

\appto\maketitle{%
\let\@makefnmark\relax  \let\@thefnmark\relax
\ifx\@empty\addresses\else\@footnotetext{%
  \vskip-\bigskipamount\@setaddresses}
  }
\def\enddoc@text{}
\makeatother

\makeatletter
\patchcmd\maketitle
  {\uppercasenonmath\shorttitle}
  {}
  {}{}
\patchcmd\maketitle
  {\@nx\MakeUppercase{\the\toks@}}
  {\the\toks@}
  {}
  {}{}
\patchcmd\@settitle{\uppercasenonmath\@title}{\Large}{}{}
\patchcmd\@setauthors
  {\MakeUppercase{\authors}}
  {\authors}
  {}{}
\makeatother
\usepackage{amsmath,amssymb,amsthm}
\usepackage{color}
\usepackage{url}
\usepackage{tikz-cd}
\usepackage[utf8]{inputenc}
\usepackage[T1]{fontenc}
\textheight 22.5truecm \textwidth 14.5truecm
\setlength{\oddsidemargin}{0.35in}\setlength{\evensidemargin}{0.35in}

\setlength{\topmargin}{-.5cm}
\newtheorem{theorem}{Theorem}[section]
\newtheorem{definition}{Definition}[section]
\newtheorem{definitions}{Definitions}[section]
\newtheorem{notation}{Notation}[section]
\newtheorem{corollary}{Corollary}[section]
\newtheorem{proposition}{Proposition}[section]
\newtheorem{lemma}{Lemma}[section]
\newtheorem{remark}{Remark}[section]
\newtheorem{example}{Example}[section]
\numberwithin{equation}{section}
\usepackage[colorlinks=true,pagebackref=true,pagebackref=true]{hyperref}
\hypersetup{urlcolor=blue, citecolor=red , linkcolor= blue}
\usepackage[capitalise,noabbrev,nameinlink]{cleveref}      
\begin{document}
\address{$^{[1]}$ University of Sfax, Sfax, Tunisia.}
\email{\url{kais.feki@hotmail.com}}
\subjclass[2010]{Primary 47A12, 46C05; Secondary 47B65, 47A05.}

\keywords{Positive operator, semi-inner product, numerical radius, $A$-adjoint operator, inequality.}

\date{\today}
\author[Kais Feki] {\Large{Kais Feki}$^{1}$}
\title[Some numerical radius inequalities for semi-Hilbert space operators]{Some numerical radius inequalities for semi-Hilbert space operators}

\maketitle